\title{Equivalence of physical and SRB measures \\
in random dynamical systems}
\author{Alex Blumenthal\thanks{Department of Mathematics, University of Maryland, College Park, Maryland, USA. Email: alex.blumenthal@gmail.com. This material is based upon work supported by the National Science Foundation under Award No. DMS-1604805.}
\and Lai-Sang Young\thanks{Courant Institute of Math. Sciences, New York University, New York, USA.  Email: lsy@cims.nyu.edu. This research was supported in part by NSF Grant DMS-1363161.}
}
\theoremstyle{theorem}
\newtheorem{thm}{Theorem}
\newtheorem{cor}[thm]{Corollary}
\newtheorem{lem}[thm]{Lemma}
\newtheorem{prop}[thm]{Proposition}
\theoremstyle{definition}
\newtheorem{defn}[thm]{Definition}
\newtheorem{rmk}[thm]{Remark}
\newcommand{\N}{\mathbb{N}}
\renewcommand{\P}{\mathbb{P}}
\newcommand{\R}{\mathbb{R}}
\newcommand{\Z}{\mathbb{Z}}
\newcommand{\Bc}{\mathcal{B}}
\newcommand{\Fc}{\mathcal{F}}
\newcommand{\Gc}{\mathcal{G}}
\newcommand{\Sc}{\mathcal{S}}
\newcommand{\Nc}{\mathcal{N}}
\newcommand{\Qc}{{\mathcal Q}}
\renewcommand{\a}{\alpha}
\renewcommand{\b}{\beta}
\renewcommand{\d}{\delta}
\newcommand{\e}{\epsilon}
\newcommand{\pd}{\partial}
\newcommand{\diam}{\operatorname{diam}}
\newcommand{\graph}{\operatorname{graph}}
\newcommand{\Id}{\operatorname{Id}}
\newcommand{\dist}{\operatorname{dist}}
\newcommand{\Cc}{\mathcal C}
\newcommand{\Leb}{\operatorname{Leb}}
\newcommand{\Lip}{\operatorname{Lip}}
\renewcommand{\graph}{\operatorname{graph}}
\newcommand{\Pc}{\mathcal P}
\newcommand{\uo}{{\underline \omega}}
\newcommand{\calLip}{\mathcal{L}ip}
\newcommand{\Tc}{\mathcal T}
\begin{document}

\maketitle

{
{\bf Abstract. {\small We give a geometric proof, offering a new and quite different perspective
on an earlier result of Ledrappier and Young on random transformations \cite{ledrappier1988entropy}. We show that under 
mild conditions, sample measures of random diffeomorphisms are SRB measures. 
As sample measures are the limits of forward images of stationary measures, they
can be thought of as the analog of physical measures for deterministic systems.
Our results thus show the equivalence of physical and SRB measures in the random 
setting, a hoped-for scenario that is not always true for deterministic maps.}}
}

\vskip .6in
In this paper, we prove for random dynamical systems a result 
one would have liked to have for deterministic systems (referring to
systems defined by maps or flows) except that for deterministic systems,
such a result is likely not true without some additional hypotheses.

\bigskip \noindent
{\bf Ideal picture for deterministic systems}

\smallskip
To motivate our result, consider first a deterministic system on $\R^d$ (or on a finite
dimensional manifold) with an attractor. An ``ideal picture" -- which we do not
claim to be mathematically proven or even necessarily true
but which physicists often take for granted -- might be
as follows: Lebesgue measure in the basin, transported forward by the map or flow,
converges to an invariant measure on the attractor. This measure, called 
a {\it physical measure} in \cite{eckmann1985ergodic}, is the natural invariant measure from 
an observational point of view. For systems with some hyperbolicity, it is also
an {\it SRB measure},  characterized by having smooth conditional measures 
on unstable manifolds; see, e.g., \cite{eckmann1985ergodic, young2017generalizations}.

The equivalence of physical and SRB measures can be  justified 
heuristically as follows:
As mass is transported forward by a system with hyperbolicity, it is compressed 
along stable directions and spread out along unstable 
directions, eventually aligning itself with unstable manifolds. Reasoning geometrically
as we have done, it follows that the limiting 
distribution will have the SRB property. This indeed was how Ruelle
first constructed SRB measures for Axiom attractors in \cite{ruelle1974Measure}.

Reality is a little more complex outside of the Axiom A category, however: First, there is
no guarantee that the pushed forward measures will converge. Second,
Newhouse's phenomenon of infinitely many sinks \cite{newhouse1974diffeomorphisms} 
implies that for maps that are not uniformly hyperbolic,
accumulation points
of the pushed forward measures can fracture into many ergodic
components, some of which can be Dirac measures supported on sinks.
Another example to keep in mind is the figure-eight attractor
\cite{ott2008lyapunov}. This is a rather extreme example, but it points to
the fact that without adequate control, a sequence of measures that
seemingly aligns itself with unstable manifolds need not converge to
an SRB measure.

\bigskip \noindent
{\bf Random dynamical systems}

\smallskip
By a random dynamical system (RDS) in this paper, we refer to the
composition of {\it i.i.d.}  sequences of random diffeomorphisms. 
{ RDS are used to model dynamical systems with a stochastic component 
or experiencing small random fluctuations. Solutions of stochastic differential
equations (SDE) are known to have representations as stochastic flows of diffeomorphisms,
the time-$t$-maps of which are compositions of {\it i.i.d.}  sequences of random 
diffeomorphisms; see, e.g., \cite{arnold2013random, kunita}. }

In the world of RDS, it is quite natural for the stationary measure to 
have a density, so let us for the moment confuse the stationary measure 
with Lebesgue measure. Also, ergodicity is achieved easily in such RDS, and with
ergodicity, one does not have to be concerned with the fracturing of 
the limit measure. Under these assumptions, all of
which are quite mild for RDS, we prove that the reasoning in the
``ideal picture" above is valid. 

\bigskip \noindent
{\bf Main Result (informal version).} {\it Consider an ergodic RDS
$\{f^n_\uo\}$, the stationary measure $\mu$ of which has a density.
Assume the system has a positive Lyapunov exponent.
Then for almost every sample path $\uo$, $(f^n_{\theta^{-n}\uo})_*\mu$
converges as $n \to \infty $ to a random SRB measure $\mu_\uo$.
Here $\theta^n$ is time shift
on the sequence of random maps. }

\bigskip
A precise formulation is given in Section 1. Under the conditions above, 
we have also an {\it entropy equality}, which asserts that pathwise entropy 
is equal to the sum of positive Lyapunov exponents. That follows easily once 
we have the SRB property,
by a proof identical to that for deterministic systems.

The results above are not new. They were first proved by Ledrappier and Young 
\cite{ledrappier1988entropy} and subsequently extended to random endomorphisms by Liu, Qian and Zhang \cite{liu2002entropy}; see
also the more recent book \cite{qian2009smooth} of Qian, Xie and Zhu. 
In these earlier proofs, the authors showed that the RDS satisfies 
an entropy equality, from which they deduced the SRB property of $\mu_\uo$
by appealing to another theorem. This last result, which provides
the crucial link to random SRB measures, is not elementary,
especially when zero Lyapunov exponents are present;
see \cite{ledrappier1985metric} for a complete proof in the nonrandom case.
We mention also the recent result \cite{brown2017measure} of Brown and Rodriguez-Hertz 
for random surface diffeomorphisms, proved under an assumption of randomness for $E^s$.

The proof presented here is new and different, and we think it has the following merits: 
One, it is conceptually more transparent and confirms the intuition behind
the ``ideal picture" discussed above. Two, it highlights clearly the differences between deterministic
and random dynamical systems; and three, our proof is more generalizable as we will show
in forthcoming papers. For example, the proof of the entropy formula in
\cite{ledrappier1988entropy}  involves conditional densities 
on the stable foliation,
ruling out immediately direct generalizations to semiflows defined
by dissipative PDEs, for which stable manifolds are always infinite
dimensional.

Finally, one of our motivations for presenting a more accessible proof
is that there has been some renewed interest in random dynamical
systems, and in the idea of random SRB measures in particular. We mention
two recent applications in which these ideas have appeared: one is
the reliability of biological and engineered systems (see, e.g., \cite{lin2009reliability})
and the other is in climate science (see, e.g., \cite{chekroun2011stochastic}).


\section{Setting and statement of results}

We begin with the definition of a {\it random dynamical system}, abbreviated
as RDS.

Let $\Omega$ be a Polish space, and let $P$ be a Borel probability measure 
on $\Omega$. Let $M$ be a compact Riemannian manifold, and consider
 a Borel measurable mapping $\omega \mapsto f_{\omega}$ 
 from $\Omega \to$ Diff$^2(M)$, the space of $C^2$ 
diffeomorphisms from $M$ onto itself equipped with the $C^2$-metric.
An RDS consists of compositions of sequences of maps from
$\{f_{\omega}, \omega \in \Omega\}$
chosen {\it i.i.d.} with law $P$. 
For $\uo = (\omega_n)_{n \in \Z} \in \Omega^\Z$ and $n \in \Z$, we write
\[
f_{\uo}^n = 
\begin{cases}
f_{\omega_n} \circ \cdots \circ f_{\omega_1} & n > 0 \\
\Id & n = 0 \\
f_{\omega_{-(n-1)}}^{-1} \circ \cdots \circ f_{\omega_0}^{-1} & n < 0
\end{cases}
\]
One considers also one-sided compositions $f_{\uo^+}^n$
for $\uo^+ \in \Omega^{\Z^+} := \prod_{n > 0 } \Omega$ and $n >0$.

There are several ways to view an RDS. One is as a Markov chain $(X_n)$ 
on $M$ defined by fixing an initial condition $X_0 \in M$ and setting 
$X_{n+1} = f_{\omega_{n+1}}(X_n)$. 
Equivalently, we define the transition probabilities of the chain by
$$
\Pc(E|x) = P\{\omega: f_\omega x \in E\}
$$
for $x \in M$ and Borel sets $E \subset M$.
A Borel probability measure $\mu$ on $M$ is said to be {\it stationary} 
if for all Borel sets $E \subset M$, 
$$
\mu(E) = \int \Pc(E|x) \mu (dx)\ .
$$

Another viewpoint is to represent an RDS as a measure-preserving skew product
map. Here it is important to distinguish between the two-sided and one-sided
cases. Let $\theta: \Omega^\Z \to \Omega^\Z$ be the leftward shift
preserving the probability $\mathbb P = P^\Z$ on $\Omega^\Z$, and let 
$\theta^+: \Omega^{\Z^+} \to \Omega^{\Z^+}$ be the corresponding shift 
preserving $\mathbb P^+ = P^{\Z^+}$. Then the skew product maps 
corresponding to the RDS above are given by
\begin{eqnarray*}
 \tau : M \times \Omega^\Z \to M \times \Omega^\Z
\quad & \mbox{with}& \quad \tau (x, \uo) = (f_{\omega_1} x, \theta \uo)\\
\mbox{and} \qquad \tau^+ : M \times \Omega^{\Z^+} \to M \times \Omega^{\Z^+}
\quad &\mbox{with}& \quad \tau^+(x, \uo^+) = 
(f_{\omega_1} x, \theta^+ \uo^+)\ ,
\end{eqnarray*}
and Lemma \ref{lem:invariantMeasures1} identifies the relevant invariant measures of $\tau$ and $\tau^+$:

\newpage
\begin{lem}\label{lem:invariantMeasures1} \
\begin{itemize}
\item[(a)] A Borel probability measure $\mu$ on $M$ is a stationary measure of the 
Markov chain $(X_n)$ if and only if $\mu \times \P^+$ is an invariant
measure of $\tau^+ : M \times \Omega^{\Z^+} \to M \times \Omega^{\Z^+}$.
\item[(b)] Given $\mu$ as above, there is a unique $\tau$-invariant probability measure $\mu^*$ on $M \times \Omega^\Z$ that projects onto $\mu \times \P^+$.
\end{itemize}
\end{lem}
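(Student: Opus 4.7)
\textbf{Part (a)} is a direct Fubini argument. To prove $\mu \times \P^+$ is $\tau^+$-invariant assuming $\mu$ is stationary, I would test against cylinder functions $\phi(x, \uo^+) = g(x)\, h(\omega_1, \ldots, \omega_k)$. The i.i.d.\ structure of $\P^+$ renders $\omega_1$ independent of $(\omega_j)_{j \geq 2}$, so Fubini factors the integral as
\[
\int \phi \circ \tau^+ \, d(\mu \times \P^+) = \left(\int g(f_{\omega_1}(x))\, d\mu(x)\, dP(\omega_1)\right) \cdot \int h \, d\P^+,
\]
and the stationarity identity $\int \Pc(\cdot|x)\, d\mu = \mu$, combined with shift-invariance of $\P^+$, reduces the first factor to $\int g\, d\mu$, matching $\int \phi\, d(\mu \times \P^+)$. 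For the converse, specializing to $\phi(x,\uo^+) = \mathbf{1}_E(x)$ reads off the stationarity identity directly from $\tau^+$-invariance.

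For \textbf{part (b)}, the plan is to construct $\mu^*$ as the natural extension of $(\tau^+, \mu \times \P^+)$ along the factor map $\pi(x, \uo) = (x, \uo^+)$. A direct check gives $\pi \circ \tau = \tau^+ \circ \pi$, and iterating this using invertibility of $\tau$ yields $\pi \circ \tau^k = (\tau^+)^k \circ \pi$ for all $k \geq 0$. Setting $\pi_n := \pi \circ \tau^{-n}$ and $\Fc_n := \pi_n^{-1} \Bor(M \times \Omega^{\Z^+})$, I would then derive the consistency identity
\[
(\tau^+)^{m-n} \circ \pi_m = \pi_n \qquad (m \geq n \geq 0),
\]
which shows $\Fc_n \subseteq \Fc_{n+1}$ and, together with the $\tau^+$-invariance established in (a), makes the measures $\mu_n$ on $\Fc_n$ defined by $\mu_n(\pi_n^{-1}(B)) := (\mu \times \P^+)(B)$ into a consistent projective family. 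Since the $\Fc_n$ generate the Borel $\sigma$-algebra of the Polish space $M \times \Omega^\Z$ (every cylinder depending on finitely many coordinates lies in some $\Fc_n$), the Kolmogorov extension theorem produces a unique Borel probability $\mu^*$ on $M \times \Omega^\Z$ with $\mu^*|_{\Fc_n} = \mu_n$ for all $n$; the case $n = 0$ yields $\pi_* \mu^* = \mu \times \P^+$.

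Both $\tau$-invariance and uniqueness of $\mu^*$ would then follow from the elementary identity $\pi_n \circ \tau = \pi_{n-1}$. Indeed, any $\tau$-invariant extension of $\mu \times \P^+$ under $\pi$ automatically satisfies $(\pi_n)_* \mu^* = (\pi_0)_* \mu^* = \mu \times \P^+$ for every $n \geq 0$ and so is pinned down by Kolmogorov; conversely, the constructed $\mu^*$ satisfies $(\pi_n)_*(\tau_* \mu^*) = (\pi_{n-1})_* \mu^* = \mu \times \P^+$, forcing $\tau_* \mu^* = \mu^*$ by uniqueness. The only real subtlety is isolating the correct projective family and the consistency identity; once set up, no significant obstacle remains, since the rest is a routine application of Kolmogorov extension together with careful bookkeeping of the indices.
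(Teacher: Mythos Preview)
Your proposal is correct. The paper does not actually prove this lemma; it simply states that Lemmas~\ref{lem:invariantMeasures1} and~\ref{lem:invariantMeasures} are standard and refers the reader to Chapter~1 of \cite{arnold2013random}. Your argument---the Fubini computation for (a) and the natural-extension construction via the projective family $(\pi_n)_*\mu^* = \mu\times\P^+$ for (b)---is exactly the standard route one finds in such references, so there is nothing to compare.
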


The next lemma gives more information on the disintegration 
of $\mu^*$ on $M$-fibers, i.e., the family of probability measures
$\{\mu_\uo, \uo \in \Omega^\Z\}$ on $M$ with the property that for all continuous
$\varphi: M \times \Omega^\Z \to \mathbb R$, we have
$$
\int \varphi (x, \uo) d \mu^*(x,\uo)  = \int \left(\int \varphi (x, \uo) d\mu_\uo (x)\right)\ 
d \mathbb P (\uo)\ .
$$

\begin{lem} \label{lem:invariantMeasures} \
\begin{itemize}
\item[(a)] The measures $\mu_\uo$ are invariant in the sense that for each 
$\uo = (\omega_n) \in \Omega^\Z$, 
$$
(f_{\omega_1})_* \mu_\uo = \mu_{\theta \uo}\ .
$$
\item[(b)] For $\P$-a.e. $\uo \in \Omega^\Z$,
\[
(f^n_{\theta^{-n} \uo})_* \mu \to \mu_{\uo} \qquad \mbox{weakly as } n \to \infty\ .
\]
It follows that $\mu_{\uo}$ depends only on $\omega_n$ for $n \le 0$.
\end{itemize}
\end{lem}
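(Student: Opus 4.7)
The plan is to derive (a) from uniqueness of disintegration, and prove (b) by identifying $(f^n_{\theta^{-n}\uo})_*\mu$, tested against continuous functions, with a forward Doob martingale on an increasing filtration of past-coordinate $\sigma$-algebras, and then identifying the limit with $\mu_\uo$ via the uniqueness clause in Lemma \ref{lem:invariantMeasures1}(b). For (a): since $\mu^*$ is $\tau$-invariant and $\tau(x,\uo) = (f_{\omega_1}x, \theta\uo)$, disintegrating $\tau_*\mu^* = \mu^*$ over $\Omega^\Z$ yields two expressions for the fiber measure above $\theta\uo$, namely $(f_{\omega_1})_*\mu_\uo$ on one side and $\mu_{\theta\uo}$ on the other (using $\theta$-invariance of $\P$); uniqueness of disintegration equates them $\P$-a.s.

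For (b), fix $\varphi \in C(M)$ and set $g(\uo) := \int \varphi\,d\mu_\uo$, $\nu_n(\uo) := (f^n_{\theta^{-n}\uo})_*\mu$, and $h_n(\uo) := \int\varphi\,d\nu_n(\uo)$. With $\Fc_n := \sigma(\omega_{-n+1},\ldots,\omega_0)$, an increasing filtration with $\bigvee_n \Fc_n = \Fc^- := \sigma(\omega_k : k \leq 0)$, the crux is
\[
h_n = \E[\,g \mid \Fc_n\,] \qquad \P\text{-a.s.}
\]
Since $h_n$ is visibly $\Fc_n$-measurable, one need only check $\E[\xi h_n] = \E[\xi g]$ for bounded $\Fc_n$-measurable $\xi$. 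Iterating (a) gives $g(\uo) = \int \varphi\circ f^n_{\theta^{-n}\uo}\,d\mu_{\theta^{-n}\uo}$; substituting $\uo \mapsto \theta^n\uo$ in both $\E[\xi g]$ and $\E[\xi h_n]$ reduces them to integrals on $M \times \Omega^\Z$ of the same integrand $\xi(\theta^n\uo)\,\varphi(f^n_\uo(x))$ against $\mu^*$ and $\mu\times \P$, respectively. This integrand depends on $\uo$ only through $(\omega_1,\ldots,\omega_n)$, hence factors through $M \times \Omega^{\Z^+}$, where the two measures agree because $\mu^*$ projects onto $\mu \times \P^+$.

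L\'evy's upward convergence theorem then gives $h_n \to \E[g \mid \Fc^-]$ both $\P$-a.s.\ and in $L^1$. Taking $\varphi$ over a countable dense subset of $C(M)$, the random measures $\nu_n(\uo)$ converge weakly $\P$-a.s.\ to some $\Fc^-$-measurable random measure $\tilde\mu_\uo$. To identify $\tilde\mu_\uo$ with $\mu_\uo$, form $\tilde\mu^* := \int \delta_\uo \otimes \tilde\mu_\uo\, d\P(\uo)$. The relation $\nu_n(\theta\uo) = (f_{\omega_1})_*\nu_{n-1}(\uo)$ passes to the limit and gives $\tilde\mu_{\theta\uo} = (f_{\omega_1})_*\tilde\mu_\uo$, so $\tilde\mu^*$ is $\tau$-invariant. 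Since $\int \nu_n\,d\P = \mu$ by stationarity, $L^1$ convergence yields $\int \tilde\mu_\uo\,d\P = \mu$; combined with the $\Fc^-$-measurability of $\tilde\mu_\uo$ (so independence from $\uo^+$), this shows that $\tilde\mu^*$ projects onto $\mu \times \P^+$. By uniqueness in Lemma \ref{lem:invariantMeasures1}(b), $\tilde\mu^* = \mu^*$, so $\tilde\mu_\uo = \mu_\uo$ $\P$-a.s., and the $\Fc^-$-measurability of $\tilde\mu_\uo$ gives the final statement.

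The main obstacle is the identification $h_n = \E[g\mid \Fc_n]$: that is where the structure of $\mu^*$ (not just its $\tau$-invariance, but its projection onto $\mu \times \P^+$ together with the independence of the $\omega_k$) is essential; the rest is routine martingale theory combined with the uniqueness of $\mu^*$.
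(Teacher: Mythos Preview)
Your argument is correct and is essentially the standard proof; note, however, that the paper does not prove this lemma at all, stating instead that Lemmas \ref{lem:invariantMeasures1} and \ref{lem:invariantMeasures} ``are standard; see, e.g., Chapter 1 of \cite{arnold2013random} for details.'' Your martingale identification $h_n = \E[g \mid \Fc_n]$ and the appeal to L\'evy's upward theorem, followed by the uniqueness clause of Lemma \ref{lem:invariantMeasures1}(b), is exactly the argument one finds in that reference, so there is nothing to compare.
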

Lemmas \ref{lem:invariantMeasures1} and \ref{lem:invariantMeasures} are standard; see, e.g., Chapter 1 of \cite{arnold2013random} for details.

Lemma \ref{lem:invariantMeasures}(b) tells us that the $\mu_\uo$, which are called  {\it sample measures}, are 
in fact  the conditional distributions of $\mu$ given the history of the dynamical system, 
$\uo^- = (\omega_n)_{n \le 0}$. Intuitively, they 
represent what we see at time $0$ given that the transformations $f_{\omega_n}, 
n \le 0$, have occurred. 

Given an RDS together with a stationary measure $\mu$, certain properties of
deterministic systems $(f, m)$, where $f$ is a single diffeomorphism and $m$
an invariant measure, extend in a straightforward way to the RDS
via their skew product representations. 
We assume throughout that 
\begin{align}\label{eq:integrability}
\int \log^+ \|  f_\omega \|_{C^2} \, d P(\omega) \, , \quad \int \log^+ \| f^{-1}_\omega\|_{C^2} \, d P(\omega) \ \ < \infty \, .
\end{align}
These conditions are satisfied by the time-one maps of a large class of SDEs \cite{kifer1988}. Under these assumptions, 
the following are known: For one-sided skew products,
Lyapunov exponents of $f^n_{\uo^+}$ are defined $\mu$-a.e. for $\mathbb P^+$-a.e.
$\uo^+$, as are stable manifolds corresponding to
negative Lyapunov exponents. For the two-sided skew-product, Lyapunov
exponents of $f^n_\uo$ are defined $\mu^*$-a.e., as are stable 
and unstable manifolds. Lyapunov exponents are nonrandom. Another nonrandom
quantity of the RDS is pathwise entropy, which we denote by $h_\mu(\{f^n_{\uo^+}\})$. 
See \cite{arnold2013random, kifer2012ergodic} for more information.

As a direct generalization of the idea of SRB measures in the deterministic case, we have
the following:

\begin{defn}\label{defn:SRB} Let $\{f_\omega\}$ and $\mu$ be given.
We say the $\mu_\uo$ are {\rm \bf random SRB measures} if \

1. $f^n_\uo$ has a positive Lyapunov exponent $\mu^*$-a.e. and 

2. for $\mathbb P$-a.e. $\uo$, the sample measure $\mu_\uo$ has absolutely continuous conditional
measures on unstable manifolds.
\end{defn} 

The main result of this paper can now be stated formally as follows:

\bigskip \noindent
{\bf Main Theorem.} {\it Let $\{f_\omega\}$ be a RDS satisfying \eqref{eq:integrability}, and let $\mu$ be an ergodic
stationary measure. We assume that

1. $\mu \ll \Leb$ with a continuous density, and 

2. $\{f^n_{\uo^+}\}$ has a positive Lyapunov exponent $(\mu \times \mathbb P^+)$-a.e. 

\noindent Then the $\mu_\uo$ are random SRB measures. }

\medskip

\bigskip \noindent
{\bf Corollary.} {\it 
Let $\{ f_\omega\}$ be as in the Main Theorem. Then the entropy formula  
\[
h_\mu(\{f^n_{\uo^+}\}) = \sum_{i : \lambda_i > 0} m_i \lambda_i \, 
\]
holds. Here $h_\mu(\{ f^n_{\uo^+}\})$ is pathwise entropy, and $\lambda_1 > \lambda_2 > \cdots > \lambda_d$ denote the Lyapunov exponents of $(f^n_{\uo^+})$ with multiplicities $m_i, 1 \leq i \leq d$.}

\bigskip
As noted in the Introduction, the results above were first proved 
 in \cite{ledrappier1988entropy}. They were subsequently extended 
 to random endomorphisms in \cite{liu2002entropy}, and to  compositions
  that are not necessarily {\it i.i.d.} in \cite{qian2009smooth}. 
In all of these papers, the result in the Corollary is first proved, 
and the result in the Main Theorem is deduced from that by appealing to
the RDS version of the entropy formula characterization for SRB measures.
Here we prove these results in the opposite order: we give a direct proof
of the SRB property of $\mu_\uo$. Once that is proved, the Corollory  follows
immediately by a proof identical to that in the deterministic case.

Our proof of the Main Theorem will proceed as follows. For $\mathbb P$-a.e. $\uo$,
we consider $(f^n_{\theta^{-n}\uo})_*\mu:= \mu^n_\uo$, which we know converges to
$\mu_\uo$ as $n \to \infty$ by Lemma \ref{lem:invariantMeasures}(b).
It suffices to show that $\mu_\uo$ has smooth conditional probabilities on unstable 
manifolds, and we will prove that by showing that the geometric argument in the 
``ideal picture" in Section 1 can, in fact, be made rigorous for RDS. 

One of the technical novelties of this paper is our analysis of orbits with {\it finite pasts}.
For RDS, this is both important and natural, for the set of ``typical" points changes with knowledge of
the past: with zero knowledge of the past, $\mu$-a.e. $x$ is ``typical"; 
starting from time $-n$, 
typicality as seen at time $0$ is with respect to $\mu^n_\uo$, 
and as $n \to \infty$, this measure becomes $\mu_\uo$.


\bigskip

 \noindent
{\it The following notation will be used throughout:}
\begin{itemize}
\item[--] On $M$: $T_xM$ is the tangent space at $x$, $\|\cdot\|$ is the norm on $T_xM$,
$d(\cdot, \cdot)$ is the distance on $M$ inherited from the
Riemannian metric, and $\mathcal B(x,r) = \{y \in M: d(x,y) <r\}$.
\item[--] If $E \subset T_x M$ is a subspace, then $E(r) = \{v \in T_x : \|v\| \le r\}$.
\item[--] On $\R^d, d \geq 1$, norms are denoted ${| \cdot |}$, and balls centered 
at the origin by $B(\cdot)$; see 
Sect. 2.1 for detail.
\end{itemize}

\section{Preliminaries and Main Proposition} 

In this section, we consider exclusively the two-sided skew product 
$$\tau : M \times \Omega^\Z \to M \times \Omega^\Z \qquad
\mbox{given by} \qquad
\tau (x, \uo) = (f_{\omega_1} x, \theta \uo) 
$$
with invariant probability measure $\mu^*$. {Sects. 2.1--2.4 contain
some preliminary facts that will be used later on. In Sect. 2.5,} we formulate
the Main Proposition (Proposition \ref{prop:main}) and explain why 
it implies the Main Theorem. The proof of Proposition \ref{prop:main} will
occupy the rest of this paper.

\subsection{Two-sided charts for random maps (mostly review)}

Assuming the existence of a strictly positive Lyapunov exponent, we first
record some properties enjoyed by two-sided Lyapunov charts 
at $\mu^*$-a.e. $(x,\uo)$ for the skew-product map $\tau$. 
Details of chart construction will be omitted as the results are entirely analogous 
to those for deterministic maps, and such charts have been used before for RDS
(see, e.g., \cite{arnold2013random}, Chapter 4 for more detail). 
We will include only those properties that are relevant for subsequent discussion.

\begin{prop} [Linear picture] There exist $\lambda_0>0$ and a $\tau$-invariant Borel measurable subset 
$\Gamma \subset M \times \Omega^\Z$ with $\mu^*(\Gamma)=1$ 
such that on $\Gamma$ there is a measurable splitting
$$
(x,\uo) \mapsto E^u_{(x, \uo)} \oplus E^{cs}_{(x, \uo)} = T_xM
$$
with respect to which the following hold for each $(x, \uo) \,  : $
\begin{itemize}
\item[(a)] $\lim_{n \to \infty} { \frac1n \log \| df^{-n}_{(x, \uo)} |_{E^u_{(x, \uo)}}\|} = -\lambda_0 \,  \, ;$
\item[(b)] $\lim_{n \to \infty} \frac1n \log \| df^n_{(x, \uo)} |_{E^{cs}_{(x, \uo)}} \| \leq 0 \, \, ; $  and
\item[(c)] $\lim_{n \to \pm \infty} \frac{1}{|n|} \log \| \pi^{u/cs}_{\tau^n (x, \uo)} \| = 0 \, $. 
\end{itemize}
Here, $\pi^{u/cs}_{(x, \uo)}$ denotes the projection onto $E^{u/cs}_{(x, \uo)}$ 
along $E^{cs/u}_{(x, \uo)} \, .$
\end{prop}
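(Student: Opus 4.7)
The plan is to apply the two-sided multiplicative ergodic theorem (MET) of Oseledets to the derivative cocycle of $\tau$, then to identify $E^u$ and $E^{cs}$ as specific sums of Oseledets subspaces. The integrability hypothesis \eqref{eq:integrability} delivers exactly the log-integrability needed: $\int \log^+\|df_\omega\|\, dP(\omega) < \infty$ and likewise for $f_\omega^{-1}$. Since $\mu^*$ is $\tau$-invariant (Lemma \ref{lem:invariantMeasures1}) and ergodic (inheriting ergodicity from $\mu$ via the skew product), the MET yields, on a $\tau$-invariant full $\mu^*$-measure set $\Gamma_0$, a nonrandom Lyapunov spectrum $\lambda_1 > \cdots > \lambda_k$ and a $df$-equivariant measurable splitting $T_xM = \bigoplus_i E^i_{(x,\uo)}$ satisfying $\tfrac1n\log\|df^n_{(x,\uo)} v\| \to \lambda_i$ for every nonzero $v \in E^i_{(x,\uo)}$.

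By hypothesis at least one $\lambda_i > 0$. I set $\lambda_0 := \min\{\lambda_i : \lambda_i > 0\}$ and define
\[
E^u_{(x,\uo)} \;:=\; \bigoplus_{\lambda_i > 0} E^i_{(x,\uo)} \, , \qquad E^{cs}_{(x,\uo)} \;:=\; \bigoplus_{\lambda_i \le 0} E^i_{(x,\uo)} \, .
\]
Both subspaces are $df$-invariant (so the splitting is $\tau$-equivariant), measurable in $(x,\uo)$, and their direct sum is $T_xM$. Property (a) is then a direct consequence of Oseledets: on each $E^i \subset E^u$ the backward operator norm satisfies $\tfrac1n\log\|df^{-n}|_{E^i}\| \to -\lambda_i$, so $\tfrac1n\log\|df^{-n}|_{E^u}\| \to -\min_{\lambda_i > 0}\lambda_i = -\lambda_0$. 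Property (b) is the analogous forward statement on $E^{cs}$, using $\lambda_i \le 0$ for every Oseledets block appearing there.

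The substantive step is (c), the subexponential tempering of the projection norms along orbits. Here I would observe that $\|\pi^{u/cs}_{(x,\uo)}\|$ is controlled from above by the reciprocal of the sine of the angle between $E^u_{(x,\uo)}$ and $E^{cs}_{(x,\uo)}$, so it suffices to show that this angle contracts at most subexponentially along a $\mu^*$-a.e.\ $\tau$-orbit. This is the standard tempering lemma: $\log\|\pi^u_{(x,\uo)}\|$ is $\mu^*$-a.e.\ finite, and its one-step oscillation under $\tau$ is dominated by $\log^+\|df_{\omega_1}\| + \log^+\|df_{\omega_1}^{-1}\|$, which is $L^1(\mathbb P)$ by \eqref{eq:integrability}. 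A standard Borel--Cantelli/Birkhoff argument then gives $\tfrac{1}{|n|}\log\|\pi^{u/cs}_{\tau^n(x,\uo)}\| \to 0$ as $n \to \pm\infty$. Taking $\Gamma$ to be the intersection of the three full-measure sets produced above finishes the construction.

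The only genuine obstacle is (c); everything else is direct bookkeeping from Oseledets. The tempering statement is standard for RDS of $C^2$ diffeomorphisms and is worked out in detail in Chapter 4 of \cite{arnold2013random}, so in a formal write-up I would simply quote it after setting up the Oseledets splitting.
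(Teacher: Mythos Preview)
Your proposal is correct and matches the paper's treatment: the paper does not prove this proposition but records it as a standard consequence of the Oseledets theorem and the associated tempering estimates for RDS, citing Chapter~4 of \cite{arnold2013random}---precisely the route you outline. Your identification of $\lambda_0$ as the smallest positive exponent and of $E^u, E^{cs}$ as the obvious sums of Oseledets subspaces is exactly what is intended, and your sketch of (c) via the tempering lemma is the standard argument.
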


\medskip

\newcommand{\wLip}{\widetilde{\operatorname{Lip}}}

Below we formulate a system of adapted charts for the two-sided dynamics. 
Let $\R^u = \R^{\dim E^u}, \R^{cs} = \R^{\dim E^{cs}}$ (recall that 
since $\mu$ is an ergodic stationary measure, hence $(\tau, \mu^*)$ is ergodic, 
we have that $\dim E^{u/cs}_{(x, \uo)}$ is constant along $\Gamma$).
For $w = u + v \in \R^u \times \R^{cs}$, we define $|w| = \max \{ | u |, | v | \}$ where 
$|u|$ and $|v|$ are Euclidean norms on $\R^u$ and $\R^{cs}$ respectively.
For $r > 0$, we let $B^{u/cs}(r)=\{v \in \R^{u/cs}: |v| \le r\}$, and write $B(r) = B^u(r) + B^{cs}(r)$. 

\begin{prop}[Nonlinear picture] \label{prop:charts2Side}
Fix $\d_0, \d_1, \d_2 > 0$ with $\d_0, \d_2 \ll \lambda_0$ and $\d_1$ sufficiently small, and let $\lambda = \lambda_0 - \d_0$. Shrinking $\Gamma$ by a set of $\mu^*$-measure 0
(and continuing to call it $\Gamma$), there are defined on $\Gamma$
\begin{itemize}
\item[(i)] a Borel measurable family of invertible linear maps 
\[
L_{(x, \uo)} : \R^u \times \R^{cs} \to T_x M \, , 
\]
with $L_{(x, \uo)}\R^u = E^u_{(x, \uo)}$ and $L_{(x, \uo)}\R^{cs} = E^{cs}_{(x, \uo)}$, and
\item[(ii)] a measurable function
$l: \Gamma \to \mathbb [1,\infty)$ satisfying $e^{- \d_2} \leq \frac{l \circ \tau}{l} \leq e^{\d_2}$  , 
\end{itemize}
with respect to which the following hold. Let the {\rm chart} at  $(x, \uo)$ be given by
$$
\Phi_{(x,\uo)}:  B(\d_1 l(x, \uo)^{-1}) \to M \qquad \mbox{with} \qquad 
\Phi_{(x, \uo)} = \exp_x \circ L_{(x, \uo)}\ ,
$$
and define the {\rm connecting maps} between charts to be 
$$
\tilde f_{(x, \uo)} = \Phi_{\tau(x, \uo)}^{-1} \circ f \circ \Phi_{(x, \uo)}:  B(\d_1 l(x, \uo)^{-1}) \to \R^u \times \R^{cs}\ .
$$
Then (a) for any $y, y' \in \Phi_{(x, \uo)}  B(\d_1 l(x, \uo)^{-1})$, we have
\begin{gather*}
d(y, y') \leq |\Phi_{(x, \uo)}^{-1} y - \Phi_{(x, \uo)}^{-1} y' | \leq l(x, \uo) d(y, y') \, ;
\end{gather*}
and (b) $\tilde f_{(x, \uo)}$ satisfies
\begin{itemize}
\item[(b1)]
$| (d \tilde f_{(x, \uo)})_0 u| \geq e^{\lambda} |u|$ for $u \in \R^u$ \ \ \  
and \ \ \ $| (d \tilde f_{(x, \uo)})_0 v| \leq e^{\d_0} |v|$ for $v \in \R^{cs}$;
\item[(b2)] 
$\Lip( \tilde f_{(x, \uo)} - (d \tilde f_{(x, \uo)})_0 |_{ B(\d l(x, \uo)^{-1})}) \leq \d$ 
\quad for all $\d \in (0,\d_1)$\ ; and 
\item[(b3)] $\Lip (d \tilde f_{(x, \uo)}) \leq l(x, \uo)$.
\end{itemize}
\end{prop}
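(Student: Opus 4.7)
The plan is the standard Pesin-chart construction carried out on the skew product $\tau$, using the linear objects provided by Proposition~1. The key device is a pair of Lyapunov (adapted) inner products that realize the asymptotic expansion and nonexpansion rates of $df$ in a single step. I set, for $v \in E^u_{(x,\uo)}$,
\[
(\|v\|'_u)^2 \;=\; \sum_{n \geq 0} e^{2n\lambda}\, \|df^{-n}_{(x,\uo)} v\|^2,
\]
and, for $v \in E^{cs}_{(x,\uo)}$,
\[
(\|v\|'_{cs})^2 \;=\; \sum_{n \geq 0} e^{-2n\d_0}\, \|df^n_{(x,\uo)} v\|^2.
\]
By Proposition~1(a)(b) together with the standard slowly-varying comparison for subadditive cocycles, both series converge for $\mu^*$-a.e.\ $(x, \uo)$. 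A one-line telescoping calculation then yields $\|(df_{\omega_1})v\|'_u \geq e^\lambda \|v\|'_u$ on $E^u$ and $\|(df_{\omega_1})v\|'_{cs} \leq e^{\d_0}\|v\|'_{cs}$ on $E^{cs}$, which is exactly (b1). I then define $L_{(x,\uo)}$ by sending the standard basis of $\R^u$ (resp.\ $\R^{cs}$) to a measurably-chosen $\|\cdot\|'_u$-orthonormal (resp.\ $\|\cdot\|'_{cs}$-orthonormal) basis of $E^u_{(x,\uo)}$ (resp.\ $E^{cs}_{(x,\uo)}$) via Gram--Schmidt; by construction $L$ is a Borel family of linear isomorphisms intertwining $(\R^u \oplus \R^{cs}, |\cdot|)$ with $(T_xM, \|\cdot\|')$, where $\|v^u + v^{cs}\|' := \max(\|v^u\|'_u, \|v^{cs}\|'_{cs})$.

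Next I build the tempered function $l$. The natural candidate is
\[
\tilde l(x, \uo) \;=\; \max\Bigl\{\, \|L_{(x,\uo)}\|,\; \|L^{-1}_{(x,\uo)}\|,\; \|f_{\omega_1}\|_{C^2},\; \|f^{-1}_{\omega_1}\|_{C^2},\; 1\,\Bigr\},
\]
with $\|L^{\pm 1}\|$ measured between $(\R^u\oplus\R^{cs},|\cdot|)$ and $(T_xM,\|\cdot\|)$. Proposition~1(c) and the integrability assumption \eqref{eq:integrability} (via Birkhoff applied to the $\tau$-stationary cocycles $\log^+\|f_{\omega_1}^{\pm 1}\|_{C^2}$) imply that $\tfrac{1}{|n|}\log \tilde l\circ\tau^n \to 0$ along $\mu^*$-a.e.\ $\tau$-orbit. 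A standard tempered-kernel (Tanny) lemma then furnishes a measurable $l \geq \tilde l$ satisfying the slow-variation bound $e^{-\d_2} \leq l\circ\tau/l \leq e^{\d_2}$.

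With $\Phi_{(x,\uo)} = \exp_x \circ L_{(x,\uo)}$, part (a) now follows from the sandwich $\|v\| \leq \|v\|' \leq l(x,\uo)\,\|v\|$, the left inequality coming from retaining only the $n=0$ term in each Lyapunov sum, and the right from $\|L^{-1}\| \leq \tilde l \leq l$, provided $\d_1$ is chosen small enough that $\exp_x$ distorts distances by at most a factor of, say, $2$ on the chart image. For (b2) and (b3) I Taylor-expand $\tilde f_{(x,\uo)} = \Phi^{-1}_{\tau(x,\uo)} \circ f_{\omega_1} \circ \Phi_{(x,\uo)}$ at the origin: since this composition involves $L^{-1}_{\tau(x,\uo)}$ (controlled by $l\circ\tau \leq e^{\d_2}l$), the $C^2$ data of $f_{\omega_1}$, $\exp_x$, and $L_{(x,\uo)}$, its $C^2$-norm is bounded by a polynomial in $l(x,\uo)$. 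Restricting to a ball of radius $\d\, l(x,\uo)^{-1}$ shrinks the Lipschitz norm of the quadratic remainder by a factor $\d \, l^{-1}$ times this polynomial, giving Lipschitz bound $\leq \d$ in (b2) and bound $\leq l(x,\uo)$ on $\Lip(d\tilde f)$ in (b3), once $\d_1$ is taken small and any residual polynomial factor is absorbed into $l$ (again permitted by the tempered-kernel lemma).

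The principal obstacle is the construction of $l$ in the second step: it must simultaneously be tempered at rate $\d_2$ and dominate the cocycles $\|f_{\omega_1}^{\pm 1}\|_{C^2}$ and $\|L^{\pm 1}\|$, none of which is uniformly bounded on $\Omega$. The log-integrability \eqref{eq:integrability}, together with the subexponential growth of $\|L^{\pm 1}\|$ guaranteed by Proposition~1(c), is precisely what enables the tempered-kernel lemma to produce such an $l$; without it, the unbounded $C^2$ data of the random maps could not be absorbed into the chart size and the nonlinear error estimates (b2)--(b3) would fail.
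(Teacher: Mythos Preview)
Your proposal is correct and follows the standard Pesin chart construction, which is exactly what the paper has in mind: the paper omits the proof entirely, remarking only that ``the results are entirely analogous to those for deterministic maps'' and that the one RDS-specific modification is to take $l \geq l_1$, where $l_1(\uo) := \sup_{n \in \Z} e^{-|n|\d_2}\max\{\|f_{\omega_n}\|_{C^2}, \|f_{\omega_n}^{-1}\|_{C^2}\}$. Your tempered-kernel construction applied to $\tilde l$ (which includes $\|f_{\omega_1}^{\pm 1}\|_{C^2}$) produces exactly this $l_1$ as a lower bound, so your treatment of the ``principal obstacle'' matches the paper's remark precisely.
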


A difference in Proposition \ref{prop:charts2Side} from the single diffeomorphism case
is that in (b2) and (b3) above, we needed to take into consideration the possibly unbounded
sequence of $C^2$ norms $\| f_{\omega_n}^\pm\|_{C^2}, n \in \Z$. We account for this
 by taking $l \geq l_1$, where \[l_1(\uo) := \sup_{n \in \Z} \big( e^{- |n| \d_2} \max \{ \| f_{\omega_n}\|_{C^2}, \| f_{\omega_n}^{-1} \|_{C^2}\}\big) \]
is finite $\P$-almost surely by our integrability condition \eqref{eq:integrability}.

As in the deterministic case, we also have the notion of {\it uniformity sets},
i.e., sets of
the form $\Gamma_{l_0}:= \{l \le l_0\}$ for fixed $l_0 \geq 1$. 

\subsection{Continuity of $E^u$ and $E^{cs}$}

For a single diffeomorphism, the continuity of $E^u$ and $E^{cs}$ on uniformity sets is well known.
We formulate and prove here the RDS versions that will be needed later on. 
For $\uo \in \Omega^\Z$, 
we write $\uo = (\uo^-, \uo^+) \in \Omega^{\Z^-} \times \Omega^{\Z^+}$,
where $\Omega^{\Z^-} = \prod_{n \le 0} \Omega$ and 
$\Omega^{\Z^+} = \prod_{n > 0} \Omega$.

\begin{prop}[Continuity of $E^u$ and $E^{cs}$]\label{prop:ctyEucs} Let $l_0 \geq 1$ be fixed. Then 
\begin{itemize}
\item[(a)] for fixed $\check \uo^+ \in \Omega^{\Z^+}$, 
$(x, \uo) \mapsto E^{cs}_{(x,\uo)}$ 
is continuous among $(x, \uo) \in \{l \le l_0 \} \cap \{ \uo^+ = \check \uo^+\}$.
\item[(b)] for fixed $\check \uo^- \in \Omega^{\Z^-}$, $(x, \uo) \mapsto E^u_{(x,\uo)}$ is continuous
among $(x, \uo) \in \{l \le l_0 \} \cap \{ \uo^- = \check \uo^-\}$. Specifically, for
any $\e > 0$, there exists $n_0 = n_0(\e, l_0)$ and $\eta = \eta(\e, l_0)$ such that if
 $(x, \uo), (y, \uo') \in \{ l \leq l_0\}$ are such that $\omega_{-i} = \omega_{-i}'$ for all
$0 \leq i \leq n_0 - 1$, then
\[
d(x,y) < \eta \quad \text{ implies } \quad d_H(E^u_{(x, \uo)}, E^u_{(y, \uo')} ) < \e \, .
\]
\end{itemize}
\end{prop}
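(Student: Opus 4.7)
My strategy for both parts is the classical cone-narrowing characterization of Oseledets subspaces, combined with the uniform hyperbolicity on $\{l\le l_0\}$ supplied by Proposition~\ref{prop:charts2Side}.

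For part (a), the crucial observation is that $E^{cs}_{(x,\uo)}$ depends only on the forward data $(x,\uo^+)$: indeed, $v\in E^{cs}_{(x,\uo)}$ iff $\limsup_n \tfrac{1}{n}\log\|df^n_{(x,\uo)}v\|\le 0$, and $df^n_{(x,\uo)}$ is determined by $(x,\omega_1,\dots,\omega_n)$. Fixing $\uo^+=\check\uo^+$ thus makes $E^{cs}_{(x,\uo)}$ a function of $x$ alone on the set in question, so the claim reduces to continuity in $x$. I would then characterize $E^{cs}_{(x,\check\uo^+)}=\bigcap_{N\ge 0}(df^N_{(x,\check\uo^+)})^{-1}\mathcal{C}_N$, where $\mathcal{C}_N$ is a cone of the correct dimension and bounded half-width at $f^N_{\check\uo^+}(x)$, chosen (via a finite background atlas) to depend continuously on the base point. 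On the uniformity set the spectral gap from Proposition~\ref{prop:charts2Side}(b1) ($\lambda-\delta_0>0$) gives a uniform narrowing rate $e^{-(\lambda-\delta_0)N}$, and since $df^N_{(x,\check\uo^+)}$ is jointly continuous in $x$, each finite-$N$ truncation is continuous; uniform convergence as $N\to\infty$ then transmits continuity to the limit.

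For part (b), the key identity is $E^u_{(x,\uo)}=df^{n_0}_{\tau^{-n_0}(x,\uo)}\,E^u_{\tau^{-n_0}(x,\uo)}$. Suppose $(x,\uo),(y,\uo')\in\{l\le l_0\}$ share the past $\omega_{-i}=\omega'_{-i}$ for $0\le i\le n_0-1$. Then $g:=f^{n_0}_{\theta^{-n_0}\uo}=f^{n_0}_{\theta^{-n_0}\uo'}$ is a single $C^2$ diffeomorphism of $M$, and $\hat x:=g^{-1}x$, $\hat y:=g^{-1}y$ satisfy $d(\hat x,\hat y)\le\Lip(g^{-1})\cdot d(x,y)$; by the integrability assumption \eqref{eq:integrability} and the tempered bound $l\circ\tau^{-n_0}\le l_0 e^{n_0\delta_2}$ on $\{l\le l_0\}$, both $\Lip(g^{-1})$ and the chart radii at $\tau^{-n_0}(x,\uo)$, $\tau^{-n_0}(y,\uo')$ are controlled in terms of $n_0,l_0$. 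The dynamical picture is that forward iteration of $dg$ along the shared past contracts any chart-cone transverse to $E^{cs}$ at the past point into a very narrow cone around $E^u$ at the current point; thus both $E^u_{(x,\uo)}$ and $E^u_{(y,\uo')}$ appear as $g$-images of such contracted cones.

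The main obstacle is to select \emph{common} reference cones at $\hat x$ and $\hat y$, since the past center-stable subspaces $E^{cs}_{\tau^{-n_0}(x,\uo)},E^{cs}_{\tau^{-n_0}(y,\uo')}$ can be unrelated (they depend on $\theta^{-n_0}\uo,\theta^{-n_0}\uo'$, which may differ beyond index $-n_0$). I plan to work in the adapted charts $\Phi_{\tau^{-n_0}(x,\uo)}$ and $\Phi_{\tau^{-n_0}(y,\uo')}$, each of radius $\ge\delta_1 l_0^{-1}e^{-n_0\delta_2}$, in which Proposition~\ref{prop:charts2Side}(b1) ensures that a chart-cone of half-width $\alpha\in(0,1)$ around $\R^u$ is contracted by the $n_0$-fold composition of connecting maps to half-width $\lesssim\alpha\,e^{-(\lambda-\delta_0)n_0}$. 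Re-expressing both chart-cones in common $\exp_x$-coordinates at $x$ via the tempered chart transitions (whose Lipschitz constants are polynomially controlled by $l_0 e^{n_0\delta_2}$ thanks to Proposition~\ref{prop:charts2Side}(a)) produces two narrow cones around $E^u_{(x,\uo)},E^u_{(y,\uo')}$ whose positional discrepancy is $\lesssim\Lip(dg)\cdot d(\hat x,\hat y)$. Hence $d_H(E^u_{(x,\uo)},E^u_{(y,\uo')})\le C_1(l_0)\,e^{-(\lambda-\delta_0)n_0}+C_2(l_0,n_0)\,\eta$; choosing $n_0$ so that the first term is below $\e/2$, and then $\eta$ small enough (depending on $n_0,l_0$) to absorb the second, delivers the claimed Hausdorff bound.
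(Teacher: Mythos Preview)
Your treatment of (a) is fine and matches the paper's remark that $E^{cs}$ depends only on $\uo^+$, so continuity reduces to a standard one-sided argument.

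For (b), however, there is a genuine gap in the cone-narrowing plan. You correctly observe that in the chart at $\tau^{-n_0}(x,\uo)$ the $n_0$-fold connecting map contracts a cone around $\R^u$ to a narrow cone around $E^u_{(x,\uo)}$, and likewise in the chart at $\tau^{-n_0}(y,\uo')$. But after re-expressing both in common $\exp_x$-coordinates you obtain two narrow cones centered at $E^u_{(x,\uo)}$ and $E^u_{(y,\uo')}$ respectively, and the distance between these centers is precisely the quantity you are trying to bound. Your claimed ``positional discrepancy $\lesssim\Lip(dg)\cdot d(\hat x,\hat y)$'' would follow if the two starting cones at $\hat x$ and $\hat y$ were around a \emph{common} (or nearby) subspace, but you have no such control: the axes $E^u_{\tau^{-n_0}(x,\uo)}$ and $E^u_{\tau^{-n_0}(y,\uo')}$ depend on the further pasts of $\uo,\uo'$, which are completely unrelated, and the complementary $E^{cs}$ subspaces at the past depend on futures that may also differ. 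So the cone argument, as written, is circular.

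The paper's proof avoids this by a direct vector decomposition using \emph{backward} iteration along the shared past. Take a unit $v\in E^u_{(x,\uo)}$ and decompose $v=\hat v+v^s$ in the splitting $E^u_{(y,\uo')}\oplus E^{cs}_{(y,\uo')}$; the goal is $\|v^s\|<\e$. On one hand, $\|df^{-n_0}_y v\|\le\|df^{-n_0}_x v\|+\|df^{-n_0}_x-df^{-n_0}_y\|$, where the first term is $\le l_0 e^{-n_0(\lambda-\delta_2)}$ (backward contraction of $E^u$ in the $(x,\uo)$-chart) and the second is $\le C_{n_0}d(x,y)$ by a telescoping estimate using only the shared maps $f^{-1}_{\omega_{-i}}$, $0\le i\le n_0-1$. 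On the other hand, $\|df^{-n_0}_y v\|\ge\|df^{-n_0}_y v^s\|-\|df^{-n_0}_y\hat v\|\ge l_0^{-1}e^{-n_0(\delta_0+\delta_2)}\|v^s\|-l_0^2 e^{-n_0(\lambda-\delta_2)}$, using the $(y,\uo')$-chart estimates for $v^s\in E^{cs}_{(y,\uo')}$ and $\hat v\in E^u_{(y,\uo')}$. Combining gives $\|v^s\|\le (l_0^2+l_0^3)e^{-n_0(\lambda-2\delta_2-\delta_0)}+l_0 e^{n_0(\delta_0+\delta_2)}C_{n_0}d(x,y)$. The key point is that each chart system $(x,\uo)$ and $(y,\uo')$ is used only for its \emph{own} hyperbolicity estimates, and the two are linked solely through $\|df^{-n_0}_x-df^{-n_0}_y\|$; no comparison of the past splittings is ever needed.
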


\smallskip
Here, for $E \subset T_x M, E' \subset T_yM$, we have written 
$d_H(E, E')$ for the Hausdorff distance 
between the unit balls of $E$ and $E'$.
Since all considerations are local, we will assume, via the use of charts, that 
we are working in Euclidean space where
there is a canonical identification of tangent spaces. Part (a) is standard: $E^{cs}$ depends only on the future $\uo^+ = (\omega_i)_{i \geq 1}$.
Later on we will need the ``finite past" version of Part (b), which
says that the dependence of $E^u$ on the \emph{far past}, 
i.e., on $(\omega_{-i})_{i \geq n}$ for large $n$, is weak, 
 and we give a proof of it.

\begin{proof}[Proof of (b)] Let $x, y \in M$ be nearby points and $\uo, \uo' \in \Omega^\Z$ be such that
 $(x, \uo), (y, \uo') \in \{ l \leq l_0\}$. Assume that $\omega_{-i} = \omega_{-i}'$ for all $0 \leq i \leq n_0-1$ for some $n_0 \in \N$ to be specified. 
 Crucially, in the argument below we work exclusively with the maps $f^{-1}_{\omega_{- i}}$, $0 \leq i \leq n_0-1$. For ease of notation, let us write $f^{-i} := f_{\omega_{-(i-1)}}^{-1} \circ \cdots \circ f_{\omega_0}^{-1}$, $x_{-i} = f^{-i} x, y_{-i} = f^{-i} y$, and { $df^{-1}_{x_{-i}} = (df_{\omega_{-i}}^{-1})_{x_{-i}}$. }

 Let $v \in E^u_{(x, \uo)}$ be a unit vector and write $v = \hat v + v^s$ according to the splitting $E^u_{(y, \uo')} \oplus E^{cs}_{(y, \uo')}$ of $T_y M$. It suffices to bound $\| v^s\| \leq \e$ when $d(x, y)$ is suitably small. To begin, we estimate:
\begin{align*}
\| df^{-{n_0}}_{y} v \| & \leq \| df^{-{n_0}}_{x} v \| + \| df^{-{n_0}}_{x} - df^{-{n_0}}_{y} \|  \, .
\end{align*}
The first term is bounded $\leq l_0 e^{- n_0 (\lambda - \d_2)}$ by Proposition \ref{prop:charts2Side}. For 
the second term, we bound
\begin{align*}
\| df^{-{n_0}}_{x} - df^{-{n_0}}_{y} \| & \leq \sum_{j = 0}^{n_0-1} \big\| df^{-1}_{x_{-(n_0-1)}} \circ \cdots \circ df^{-1}_{x_{-(j+1)}} \circ \big( 
df^{-1}_{x_{ - j}} - df^{-1}_{y_{-j}} \big) \circ df^{-1}_{y_{ - (j - 1)}} \circ \cdots \circ df^{-1}_{y_{ 0}} \big\| \\
& \leq \sum_{j = 0}^{n_0-1} \bigg( \prod_{\substack{m = 0 \\ m \neq j}}^{n_0-1} \| df^{-1}_{\omega_{-m}} \| \bigg) \cdot \| d^2 f_{\omega_{-j}}^{-1} \| \cdot d(x_{ - j}, y_{ - j}) \\
& \leq n_0 l_0^{2 n_0} e^{(2 n_0^2 + n_0) \d_2} d(x, y) =: C_{n_0} d(x, y) \, .
\end{align*}
Here, for $\omega \in \Omega$ we write $\| df^{-1}_\omega\|, \| d^2 f^{-1}_\omega\|$ for uniform norms over $M$ and have used repeatedly the bound
$\|df^{-1}_{\omega_{-i}}\|, \| d^2 f^{-1}_{\omega_{-i}}\| \leq e^{i \d_2} l_1(\uo) \leq e^{i \d_2} l_0$. We now compute a lower bound on $\| df^{-n_0}_{y} v\|$: 
\begin{align*}
\| df^{-{n_0}}_{y} v \| & \geq \| df^{-{n_0}}_{y} v^s \| - \| df^{-{n_0}}_{y} \hat v\| \\
& \geq  l_0^{-1} e^{- {n_0}(\d_0 + \d_2)} \| v^s\| -  l_0^2 e^{- {n_0}(\lambda - \d_2)} \, ,
\end{align*}
having used the estimate $\| \hat v \| \leq \| \pi^{u}_{(y, \uo')} \| \leq  l(y, \uo') \leq  l_0$. 
Collecting,
\[
\| v^s\| \leq ( l_0^2 +  l_0^3) e^{- n_0(\lambda - 2 \d_2 - \d_0)} +  l_0 e^{n_0 (\d_0 + \d_2)} C_{n_0} d(x, y) \, .
\]
Fix $n_0=n_0(\epsilon, l_0)$ large enough that the first term is $< \epsilon/2$. Now, choose $\eta = \eta(\e, l_0, n_0)$ so
that the second term is $< \e/2$ when $d(x, y) < \eta$. 
\end{proof}

\subsection{Graph transforms and unstable manifolds}

We begin by recalling the definition of local unstable manifolds.

\begin{prop}[Unstable Manifold Theorem]\label{prop:localUMfld}
Let $\Gamma$ be as in Proposition \ref{prop:charts2Side}, and let $\d > 0$ be sufficiently small. Then there is a unique family of measurably-varying  maps $\{ g_{(x, \uo)} :  B^u(\d l(x, \uo)^{-1}) \to \R^{cs}\}_{(x, \uo) \in \Gamma}$ and 
a constant $C>0$ such that
\[
g_{(x, \uo)}(0) = 0 \quad \text{ and } \quad 
\tilde f_{(x, \uo)} (\graph g_{(x, \uo)}) \supset \graph g_{\tau(x, \uo)} 
\]
for every $(x, \uo) \in \Gamma$. Moreover,
\begin{itemize}
\item[1. ] $g_{(x, \uo)}$ is $C^{1 + \Lip}$, and $(dg_{(x, \uo)})_0 = 0$;
\item[2. ] $\Lip(g_{(x, \uo)}) \leq 1/10$, \ \ \ $\Lip(d g_{(x, \uo)}) \leq C l(x, \uo)$; \ and 
\item[3. ] if $z_1, z_2 \in \tilde f^{-1}_{(x, \uo)} (\graph g_{\tau(x, \uo)})$, then
\[
| \tilde f_{(x, \uo)} z_1 - \tilde f_{(x, \uo)} z_2 | \geq (e^\lambda - \d) |z_1 - z_2| \, .
\]
\end{itemize}
\end{prop}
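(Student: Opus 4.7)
My plan is to prove Proposition \ref{prop:localUMfld} by the Hadamard graph-transform method, adapted to the tempered RDS setting provided by Proposition \ref{prop:charts2Side}. Inside each chart the connecting map $\tilde f_{(x,\uo)}$ is a small $C^{1+\Lip}$ perturbation (by (b2), (b3)) of a linear hyperbolic map (by (b1)), so the construction is essentially that of the classical Pesin unstable manifold. Let $\mathcal{S}_{(x,\uo)}$ denote the space of $C^1$ sections $h : B^u(\d l(x,\uo)^{-1}) \to \R^{cs}$ with $h(0)=0$, $(dh)_0 = 0$, $\Lip(h) \leq 1/10$, and $\Lip(dh) \leq C l(x,\uo)$ for a fixed constant $C$. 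For $h \in \mathcal{S}_{(x,\uo)}$, the image $\tilde f_{(x,\uo)}(\graph h)$ is, by a cone-field argument based on (b1)--(b2), a Lipschitz graph over $\R^u$ of slope $\leq 1/10$, and hence defines a section $\mathcal{F}_{(x,\uo)}(h)$ at $\tau(x,\uo)$. The crucial point for well-definedness is that this graph is defined on a ball of $\R^u$-radius at least $(e^{\lambda} - O(\d)) \cdot \d l(x,\uo)^{-1} \geq \d l(\tau(x,\uo))^{-1}$, using the tempering $l \circ \tau \leq e^{\d_2} l$ together with the budget $\lambda > \d_2$ guaranteed by $\d_0, \d_2 \ll \lambda_0$. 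The $C^{1+\Lip}$ bounds on $\mathcal{F}_{(x,\uo)}(h)$ follow from the chain rule and (b3).

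To construct $g_{(x,\uo)}$, I iterate \emph{backward} along the orbit: set $h^n_{(x,\uo)} := \mathcal{F}_{\tau^{-1}(x,\uo)} \circ \cdots \circ \mathcal{F}_{\tau^{-n}(x,\uo)}(0)$, where $0$ denotes the zero section in $\mathcal{S}_{\tau^{-n}(x,\uo)}$. Backward pullback is contracting in $C^0$: for a fixed $u \in B^u(\d l(x,\uo)^{-1})$, the preimage under $n$ chart iterates sits in a ball of $\R^u$-radius $\lesssim e^{-n\lambda} |u|$, while the $\R^{cs}$ drift at each step is at most $e^{\d_0}$, yielding $\|h^n_{(x,\uo)} - h^{n+k}_{(x,\uo)}\|_{C^0} \leq C' e^{-n(\lambda - \d_0 - O(\d))}$. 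Thus $\{h^n_{(x,\uo)}\}$ is $C^0$-Cauchy and converges to a continuous $g_{(x,\uo)}$ satisfying $g_{(x,\uo)}(0)=0$, $\Lip(g_{(x,\uo)}) \leq 1/10$, and the invariance $\tilde f_{(x,\uo)}(\graph g_{(x,\uo)}) \supset \graph g_{\tau(x,\uo)}$ (the latter passes to the limit from the finite-stage invariance). Item 3 then follows directly: for a vector $v$ tangent to $\graph g_{(x,\uo)}$, the slope bound together with (b1)--(b2) yields $| d\tilde f_{(x,\uo)} v | \geq (e^\lambda - \d) |v|$.

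For the $C^{1+\Lip}$ claim and the explicit $\Lip(dg_{(x,\uo)}) \leq C l(x,\uo)$ bound (items 1 and 2), I would invoke the fiber-contraction theorem applied to the lifted graph transform on pairs $(h, \Pi)$, where $\Pi$ is a Lipschitz field of candidate tangent $\R^u$-subspaces along $\graph h$: by (b1), the fiber action contracts at rate $\approx e^{-2\lambda}$, and the Lipschitz bound on $dg_{(x,\uo)}$ emerges by summing the contributions of $\Lip(d\tilde f_{\tau^{-n}(x,\uo)}) \leq l(\tau^{-n}(x,\uo)) \leq l(x,\uo) e^{n \d_2}$ against the contraction rate $e^{-2n\lambda}$; convergence again uses $\lambda \gg \d_2$. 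Uniqueness is immediate from the expansion: any two admissible invariant sections would have a pushforward difference that grows exponentially, violating the $\Lip \leq 1/10$ bound unless they coincide on the common domain. The main technical obstacle is the tempered-radius bookkeeping along the backward orbit: one must verify at every $n$ that the domains $B^u(\d l(\tau^{-n}(x,\uo))^{-1})$ are large enough to support the relevant preimages, and it is precisely the choice $\d_0, \d_2 \ll \lambda_0$ in Proposition \ref{prop:charts2Side} that buys the necessary margin.
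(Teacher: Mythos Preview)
Your proposal is correct and follows exactly the approach the paper indicates: the paper itself omits the proof of Proposition~\ref{prop:localUMfld}, stating only that it ``is well-known'' and ``can be proved by graph transform techniques,'' and your sketch is a faithful rendition of that standard argument in the tempered-chart setting of Proposition~\ref{prop:charts2Side}. The one minor quibble is the claimed $e^{-2\lambda}$ contraction rate for the $\Lip(dg)$ iteration; the actual rate is closer to $e^{-(\lambda - \d_0)}$ (compare the constant $C_2$ in Lemma~\ref{lem:inclinationGT}), but this still dominates the $e^{n\d_2}$ growth of $l \circ \tau^{-n}$ and the conclusion is unaffected.
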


We write $W^u_{(x, \uo), \d} = 
\Phi_{(x, \uo)} (\graph g_{(x, \uo)})$, where $g_{(x, \uo)}$ is as
above. The sets $W^u_{(x, \uo), \d}$ are the \emph{local unstable manifolds} at
$(x, \uo)$. 
The \emph{global unstable manifold}
\[W^u_{(x, \uo)} = \bigcup_{n \geq 0} f^n_{\theta^{-n} \uo} W^u_{\tau^{-n} (x, \uo),\d} \, , \] 
 is an immersed submanifold of $M$.

Since Proposition \ref{prop:localUMfld} is well-known, we omit its full proof.
We do, however, note that it can be proved by graph transform techniques, some details of which
we recall here for later use. For a Lipschitz continuous map $g : B^u(\d l(x, \uo)^{-1}) \to \R^{cs}$, we define
the \emph{graph transform} $\Tc_{(x, \uo)} g$ of $g$, when it exists, to
 be the mapping $\Tc_{(x, \uo)} g : B^u(\d l(\tau(x, \uo))^{-1}) \to \R^{cs}$ for which
 \[
 \tilde f_{(x, \uo)} \graph g \supset \graph \Tc_{(x, \uo)} g
 \]

The following Lemma summarizes what we will need about  
$\Tc_{(x, \uo)}$:  

\begin{lem}\label{lem:graphTransform} 
Let $\d > 0$ be sufficiently small. 
\begin{itemize}
\item[(a)] Let $g : B^u(\d l(x, \uo)^{-1}) \to \R^{cs}$ be such that 

(i) $g$ is $C^{1 + \Lip}$ with $\Lip(g) \leq 1/10$ and

(ii) there exists $z \in \graph g$
such that  $z \in B(\frac12 \d l(x, \uo)^{-1}) \cap \tilde f_{(x, \uo)}^{-1}  
B(\frac12 \d l (\tau(x,\uo))^{-1})$. 

\noindent
Then $\Tc_{(x, \uo)} g : B^u(\d l(\tau (x, \uo))^{-1}) \to \R^{cs}$ exists,
with $\graph \Tc_{(x, \uo)} g \subset B(\d l(\tau(x, \uo))^{-1})$. Moreover,
$\Tc_{(x, \uo)} g$ is $C^{1 + \Lip}$ and satisfies $\Lip(\Tc_{(x, \uo)} g) \leq 1/10$. 

\item[(b)] Let $g^1, g^2$ be as in (a),  with (ii) replaced by $g^i(0)=0$. 
Then $\Tc_{(x, \uo)} g^i(0)=0$, and 
\[
| \Tc_{(x, \uo)} g^1 - \Tc_{(x, \uo)} g^2 |' \leq c | g^1 -  g^2 |' \, ,  
\]
where 
\[
| h |' := \sup_{u \in B^u(\d l(x, \uo)^{-1}), u \ne 0} \frac{|h(u)|}{|u|} \, 
\]
and $c \in (0,1)$ is a constant independent of $(x, \omega)$.
\end{itemize}
\end{lem}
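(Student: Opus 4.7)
The plan is to use a standard graph transform contraction argument, carefully adapted to the random chart setting. First I would write
\[
\tilde f_{(x,\uo)}(u,v) = (A^u u + R^u(u,v),\ A^{cs} v + R^{cs}(u,v)),
\]
where $A = (d \tilde f_{(x,\uo)})_0$ splits block-diagonally by Proposition \ref{prop:charts2Side}(b1), so that $\|(A^u)^{-1}\| \leq e^{-\lambda}$ and $\|A^{cs}\| \leq e^{\d_0}$, while $R = \tilde f_{(x,\uo)} - A$ satisfies $\Lip(R|_{B(\d l^{-1})}) \leq \d$ by (b2). Since $\d_0 \ll \lambda$ and $\d$ is chosen much smaller still, all the products $\|(A^u)^{-1}\|\cdot\|A^{cs}\|$, etc., will leave room for the $R$-perturbations.

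For part (a), I would set $\phi_g(u) := A^u u + R^u(u, g(u))$. Because $g$ has slope $\leq 1/10$ and $R^u$ has Lipschitz constant $\leq \d$, the map $\phi_g$ is a Lipschitz perturbation of $A^u$ with lower expansion rate at least $e^\lambda - 2\d$, so $\phi_g$ is bi-Lipschitz onto its image. The anchor condition (ii) on $z$ pins down one value $\phi_g(z^u) \in B^u(\tfrac12 \d l(\tau(x,\uo))^{-1})$, and combining this with the $e^\lambda$-expansion shows the image of $\phi_g$ covers the full ball $B^u(\d l(\tau(x,\uo))^{-1})$. Setting
\[
\Tc_{(x,\uo)} g(u') \;:=\; A^{cs} g(\phi_g^{-1}(u')) + R^{cs}(\phi_g^{-1}(u'), g(\phi_g^{-1}(u'))),
\]
gives a map whose graph lies in $\tilde f_{(x,\uo)}\graph g$. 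The slope bound $\Lip(\Tc g) \leq 1/10$ comes from a cone-field computation: differentiating the defining relation, $d(\Tc g) \approx A^{cs}\cdot dg\cdot (A^u)^{-1}$ modulo $R$-corrections of order $\d$, and $e^{-\lambda + \d_0}\cdot(1/10) + O(\d)$ is comfortably below $1/10$. The regularity $C^{1+\Lip}$ follows from the chain rule together with $\Lip(d\tilde f) \leq l$ from (b3), and the containment $\graph \Tc g \subset B(\d l(\tau(x,\uo))^{-1})$ is then immediate from the slope bound and the fact that $\Tc g$ is defined on $B^u(\d l(\tau(x,\uo))^{-1})$.

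For part (b), the hypothesis $g^i(0)=0$ together with $\tilde f_{(x,\uo)}(0)=0$ ensures $z=0$ satisfies condition (ii), and the injectivity of $\tilde f_{(x,\uo)}$ on $\graph g^i$ forces $\Tc g^i(0)=0$. To establish the $|\cdot|'$-contraction, fix $u' \in B^u(\d l(\tau(x,\uo))^{-1}) \setminus\{0\}$, write $u_i := \phi_{g^i}^{-1}(u')$, and subtract the defining relations $A^u u_i + R^u(u_i, g^i(u_i)) = u'$ to get
\[
|u_1 - u_2| \;\leq\; e^{-\lambda}\bigl( \d|u_1 - u_2| + \d|g^1(u_1) - g^2(u_1)| + (e^{\d_0} + \d)|g^1(u_1)-g^2(u_1)|\bigr),
\]
after using $\Lip(R^u)\leq \d$ and $\Lip(g^2) \leq 1/10$; absorbing the $\d|u_1-u_2|$ term yields $|u_1-u_2| \lesssim e^{-\lambda}|g^1(u_1)-g^2(u_1)|$. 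Plugging this into the analogous estimate for the $cs$-component of $\Tc g^1(u') - \Tc g^2(u')$, using $|u_i| \leq (e^\lambda - 2\d)^{-1}|u'|$ (so that $|g^1 - g^2|'$ estimates $|g^1(u_1)-g^2(u_1)|/|u_1|$ times $|u_1|$), and dividing by $|u'|$ produces $|\Tc g^1 - \Tc g^2|' \leq c\, |g^1 - g^2|'$ with $c \approx e^{-\lambda + \d_0} < 1$ independent of $(x,\uo)$.

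The main obstacle I anticipate is the bookkeeping in part (a) surrounding the anchor condition (ii): one must translate the existence of a point $z$ with $z$ and $\tilde f z$ both in half-sized balls into the statement that $\phi_g$'s image covers the full target ball $B^u(\d l(\tau(x,\uo))^{-1})$. This requires combining the $e^\lambda$-expansion of $A^u$ (which doubles any half-sized ball with ample room to spare) with the perturbative smallness of $R^u$ to control where $\phi_g(z^u)$ lands. The secondary delicate point is keeping the slope bound $\leq 1/10$ strictly invariant so that iteration is possible; both require verifying that $\d_0$ and $\d$ are chosen small enough relative to $\lambda$, a choice fixed at the start of Proposition \ref{prop:charts2Side}.
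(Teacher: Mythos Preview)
The paper does not actually prove this lemma: immediately after the statement it says ``Lemma \ref{lem:graphTransform} is standard and its proof is omitted.'' Your proposal supplies exactly the standard graph-transform argument the paper has in mind, and is essentially correct.

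A few minor points of bookkeeping. In your displayed estimate for $|u_1-u_2|$ in part (b), the term $(e^{\d_0}+\d)|g^1(u_1)-g^2(u_1)|$ does not belong: subtracting $A^u u_1 + R^u(u_1,g^1(u_1)) = u' = A^u u_2 + R^u(u_2,g^2(u_2))$ gives only $R^u$-differences on the right, and $\Lip(R^u)\le \d$ together with $\Lip(g^2)\le 1/10$ yields $(1-\tfrac{11}{10}\d e^{-\lambda})|u_1-u_2|\le \d e^{-\lambda}|g^1(u_1)-g^2(u_1)|$. This is actually stronger than what you wrote (the prefactor is $O(\d)$, not $O(e^{-\lambda})$), so your conclusion survives. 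Also, in part (a) the containment $\graph \Tc g \subset B(\d l(\tau(x,\uo))^{-1})$ is not quite ``immediate from the slope bound'' alone; you need the anchor point $\tilde f(z)$ to lie in the half-sized ball so that the slope-$1/10$ graph, passing through a point of $B(\tfrac12\d l(\tau(x,\uo))^{-1})$, stays in the full ball over the base $B^u(\d l(\tau(x,\uo))^{-1})$. You clearly have this in mind when you discuss the ``anchor condition,'' but the logical dependence should be made explicit.
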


%
%
%
%
%
 
 Lemma \ref{lem:graphTransform} is standard and its proof is omitted.

Next, we recall the following distortion estimate along unstable leaves.
As is well-known, the quality of such distortion estimates is a function only of the uniformity estimates at the  end of the trajectory, as we describe below.

\begin{lem}\label{lem:distortionHatSc}
Let $\d > 0$ be sufficiently small. Then for any $l_0 > 1$, there exists $D = D(l_0) > 0$ for which the following holds. Let $(x, \uo) \in \Gamma$ be such that $l(x, \uo) \leq l_0$, and let $p_1, p_2 \in W^u_{(x, \uo), \d}$. For arbitrary $n \geq 1$, write $W = W^u_{\tau^{-n}(x, \uo), \d}$. 
Then,
\[
\bigg| \log \frac{\det (df^n_{\theta^{-n} \uo} |_{T W}) ( f^{-n}_\uo p_1)}{ \det (df^n_{\theta^{-n} \uo} |_{T W}) ( f^{-n}_\uo p_2)} \bigg| \leq D( l_0) d(p_1, p_2) \, .
\]
\end{lem}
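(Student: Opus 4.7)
The plan is to expand the $\log$-Jacobian as a telescoping sum of single-step contributions along the backward orbit and then bound the resulting difference term by term. Writing $p_i^{(k)} := f^{-k}_\uo p_i$ for $i=1,2$ and $k=0,\ldots,n$, Proposition \ref{prop:localUMfld}(3) guarantees that each pair $p_1^{(k)}, p_2^{(k)}$ lies on the local unstable leaf $W^u_{\tau^{-k}(x,\uo),\delta}$, and the chain rule gives
\[
\log\det(df^n_{\theta^{-n}\uo}|_{TW})(p_i^{(n)}) \;=\; \sum_{k=0}^{n-1}\log\det\bigl(df_{\omega_{-k}}|_{T_{p_i^{(k+1)}}W^u}\bigr).
\]
It therefore suffices to bound each of the $n$ differences
\[
\Delta_k \;:=\; \bigl|\log\det(df_{\omega_{-k}}|_{T_{p_1^{(k+1)}}W^u}) - \log\det(df_{\omega_{-k}}|_{T_{p_2^{(k+1)}}W^u})\bigr|
\]
by a summable sequence whose sum depends only on $l_0$.

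For each $\Delta_k$, split the variation into a contribution from moving the basepoint (with the $u$-plane fixed) and one from rotating the $u$-plane (with the basepoint fixed). The basepoint contribution is bounded by the Lipschitz constant of $\log\det(df_{\omega_{-k}}|_V)$ in its basepoint, which is controlled by $\|f_{\omega_{-k}}\|_{C^2}$ and a lower bound on $\det(df_{\omega_{-k}}|_V)$ (i.e., by $\|f_{\omega_{-k}}^{-1}\|_{C^1}$), times $d(p_1^{(k+1)}, p_2^{(k+1)})$. The tangent-plane contribution is dominated by the Hausdorff distance between the two tangent spaces, which by the $C^{1+\Lip}$ bound $\Lip(dg_{\tau^{-(k+1)}(x,\uo)}) \le C\,l(\tau^{-(k+1)}(x,\uo))$ of Proposition \ref{prop:localUMfld} is itself at most $C\,l(\tau^{-(k+1)}(x,\uo)) \cdot d(p_1^{(k+1)}, p_2^{(k+1)})$.

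The decisive ingredient is exponential backward contraction along $W^u$: iterating Proposition \ref{prop:localUMfld}(3) in chart coordinates and converting between chart and manifold distance via Proposition \ref{prop:charts2Side}(a) yields
\[
d(p_1^{(k+1)}, p_2^{(k+1)}) \;\leq\; l(x,\uo)\,l(\tau^{-(k+1)}(x,\uo))\,(e^{\lambda}-\delta)^{-(k+1)}\,d(p_1,p_2).
\]
Combining this with the tempered growth bounds $\|f_{\omega_{-k}}\|_{C^2},\,\|f_{\omega_{-k}}^{-1}\|_{C^1} \leq e^{k\delta_2}l_1(\uo) \leq e^{k\delta_2} l_0$ (from $l \geq l_1$ in the chart construction) and $l(\tau^{-j}(x,\uo)) \leq e^{j\delta_2}l_0$ from Proposition \ref{prop:charts2Side}(ii), every $k$-dependent factor grows at most like $e^{Ck\delta_2}$ for some fixed $C$, so each $\Delta_k \leq C(l_0) e^{-k(\lambda - C'\delta_2)}d(p_1,p_2)$. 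Summing the geometric series, convergent since $\delta_2 \ll \lambda$, produces the desired bound with $D(l_0) = C(l_0)\sum_{k\geq 0} e^{-k(\lambda-C'\delta_2)}$.

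The main technical point is precisely this balance between the exponential backward contraction $e^{-k\lambda}$ along $W^u$ and the subexponential losses $e^{Ck\delta_2}$ stemming from the possibly unbounded $C^2$-norms of the $f_{\omega_{-k}}$ and the growth of the chart function $l$ along a generic orbit; the cocycle estimate $l\circ\tau/l \in [e^{-\delta_2}, e^{\delta_2}]$ together with $l \geq l_1$ is exactly what tames these losses and ensures that the final bound depends only on the endpoint constant $l_0$ and not on the full trajectory.
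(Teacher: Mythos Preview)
Your argument is correct and is precisely the standard distortion computation the paper has in mind: the paper does not give a proof of this lemma, remarking only that ``to control the possible unboundedness of the sequence $\|f_{\omega_n}^\pm\|_{C^2}$, we incorporated $l_1$ into the definition of $l$'' and leaving details to the reader. Your telescoping decomposition, the split into basepoint and tangent-plane contributions, and the balance between the backward contraction $(e^\lambda-\delta)^{-k}$ and the tempered losses $e^{Ck\delta_2}$ coming from $l\circ\tau^{-k}\le e^{k\delta_2}l_0$ and $\|f_{\omega_{-k}}^{\pm 1}\|_{C^2}\le e^{k\delta_2}l_1(\uo)\le e^{k\delta_2}l_0$ is exactly the intended mechanism.
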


As before, to control the possible unboundedness of the sequence $\| f_{\omega_n}^\pm\|_{C^2}$,
we incorporated $l_1$ into the definition of $l$ as in Sect. 2.1. Details are left to the reader.

{\subsection{Stacks of unstable leaves}}

All $\mu_\uo$-typical points have $W^u$-leaves passing through them, so $\mu_\uo$ itself can be thought of as being supported on a union of $W^u$-leaves. 
At issue is whether the conditional measures of $\mu_\uo$ on these leaves are
in the Lebesgue measure class. 
One way to articulate these ideas geometrically is to group nearby 
$W^u$-leaves into a \emph{stack}. We introduce here some language that 
will be useful later on.

\medskip
\newcommand{\Dom}{\operatorname{Dom}}

\noindent {\it Switching axes. }
 Let $x,y \in M$ be nearby points,
  and let $T_x M  = E_x \oplus F_x, T_y M = E_y \oplus F_y$ 
be such that $d_H(E_x, E_y) , d_H(F_x, F_y)$ $ \ll 1$. For $z = x,y$, write $\pi_z : T_z M \to E_z$
for the projection parallel to $F_z$.
 Given a mapping $\phi_y : \Dom(\phi_y) \to F_y$ defined on a set $\Dom(\phi_y) \subset E_y$,
 we write $\phi_y^x : \Dom(\phi_y^x) \subset E_x \to F_x$ for the mapping, 
 if it can be uniquely defined, such that
 \[
 \exp_x \graph \phi_y^x = \exp_y \graph \phi_y \, .
 \]
Below we give a condition to guarantee the well-definedness of $\phi_y^x$.
$\calLip(\cdot)$ refers to 
Lipschitz constants with respect to the norms $\|\cdot\|$.

\medskip
\begin{lem}\label{lem:chartSwitch11}
Given $L > 1, \rho > 0$, there exist $\e_1 = \e_1(L, \rho) \ll \rho$ and  $\e_2 = \e_2(L)$ such that the
following holds. Assume that 
\begin{itemize}
\item[(i)] $\| \pi_x \|, \| \pi_y \| \leq L$; 
\item[(ii)] \begin{itemize}
	\item[(1)] $d(x,y) < \e_1$; 
	\item[(2)] $d_H(E_x, E_y), d_H(F_x, F_y) < \e_2$; and
	\end{itemize}
\item[(iii)] $\phi_y : E_y(2 \rho) \to F_y(\frac12 \rho)$ is a Lipschitz mapping with $\calLip(\phi_y) \leq 1/10$.
\end{itemize}
Then $\phi_y^x$ exists, is defined on $E_x(\rho)$ with $\phi_y^x \big( E_x(\rho)\big) \subset F_x(\rho),$ and has $\calLip(\phi^x_y) \leq 2 \calLip(\phi_y)$. Moreover, $\exp_y \graph \phi_y|_{E_y(\frac12 \rho)} \subset \exp_x \graph \phi_y^x|_{E_x(\rho)}$. 
\end{lem}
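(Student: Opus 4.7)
Using the canonical Euclidean identification of tangent spaces near $x$ and $y$ (as the excerpt allows), I would parametrize $\exp_y(\graph \phi_y) = \{y + a + \phi_y(a) : a \in E_y(2\rho)\}$ and $\exp_x(\graph \phi_y^x) = \{x + u + \phi_y^x(u) : u \in \Dom(\phi_y^x)\}$. Writing $z = y - x$, the defining condition $\exp_x(\graph \phi_y^x) = \exp_y(\graph \phi_y)$ becomes the system
\begin{equation*}
u = \pi_x\bigl(z + a + \phi_y(a)\bigr), \qquad \phi_y^x(u) = (\Id - \pi_x)\bigl(z + a + \phi_y(a)\bigr).
\end{equation*}
The whole lemma thus reduces to solving the first equation for $a$ as a Lipschitz function of $u \in E_x(\rho)$, then reading $\phi_y^x(u)$ off the second.

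For solvability, I would set $T := \pi_x|_{E_y}\colon E_y \to E_x$ and rewrite the first equation as the fixed-point problem $a = T^{-1}\bigl(u - \pi_x z - \pi_x \phi_y(a)\bigr)$. Two angle estimates, both consequences of $d_H(E_x, E_y), d_H(F_x, F_y) < \e_2$ together with $\|\pi_x\| \leq L$, drive the argument: (i) for any unit $v \in E_y$ there is $v' \in E_x$ with $\|v - v'\| \leq \e_2$, so $\|Tv - v'\| \leq L\e_2$; hence $T$ is a small perturbation of an approximate isometry $E_y \to E_x$, invertible with $\|T^{-1}\| \leq 1 + O(L\e_2)$ once $\e_2 = \e_2(L)$ is small; (ii) since $\pi_x$ vanishes on $F_x$, $\|\pi_x|_{F_y}\| \leq L\e_2$, and similarly $\|(\Id - \pi_x)|_{E_y}\| \leq (1+L)\e_2$. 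The fixed-point map for $a$ is then a contraction of factor $\leq \|T^{-1}\|\cdot\|\pi_x|_{F_y}\|\cdot \calLip(\phi_y) \leq \tfrac12$, and choosing $\e_1 \ll \rho$ ensures it sends $E_y(2\rho)$ into itself for every $u \in E_x(\rho)$. Banach's theorem then produces a unique $a = a(u)$, with $\calLip(a) \leq 1 + O(L\e_2)$ obtained by differencing the fixed-point equation.

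The remaining claims follow by routine estimation. For the range inclusion $\phi_y^x(u) \in F_x(\rho)$, split $\phi_y^x(u) = (\Id - \pi_x)z + (\Id - \pi_x)a(u) + (\Id - \pi_x)\phi_y(a(u))$: the three summands are $O((1+L)\e_1)$, $O((1+L)\e_2\rho)$ from (ii), and $(1 + L\e_2)\cdot \tfrac{\rho}{2}$ respectively, so shrinking $\e_1, \e_2$ yields $\|\phi_y^x(u)\| < \rho$. For the Lipschitz bound, combining (ii) and the estimate on $\calLip(a)$ in the identity $\phi_y^x(u) - \phi_y^x(u') = (\Id - \pi_x)\bigl(a(u) - a(u') + \phi_y(a(u)) - \phi_y(a(u'))\bigr)$ gives $\calLip(\phi_y^x) \leq 2\calLip(\phi_y)$ after further shrinking $\e_2$. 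Finally, the graph inclusion holds because for $a \in E_y(\tfrac{\rho}{2})$ the associated $u = \pi_x(z + a + \phi_y(a))$ has norm at most $L\e_1 + (1 + O(L\e_2))\tfrac{\rho}{2} + L\e_2 \cdot \tfrac{\rho}{2} \leq \rho$. The only real obstacle is the hierarchy of small parameters: $\e_2 = \e_2(L)$ must be chosen first to secure the two angle estimates (invertibility of $T$, smallness of $\pi_x|_{F_y}$ and $(\Id - \pi_x)|_{E_y}$, and the desired Lipschitz bound), and only then may $\e_1 = \e_1(L, \rho)$ be chosen to dominate the translation $z$; with this ordering every estimate above closes automatically.
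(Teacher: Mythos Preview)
The paper does not give a proof of this lemma; it simply calls it ``straightforward'' and refers the reader to Section~5 of \cite{blumenthal2017entropy}. Your approach---reducing to Euclidean coordinates, setting up the graph-matching equations, solving for $a=a(u)$ by Banach's fixed-point theorem using the angle estimates $\|\pi_x|_{F_y}\|\le L\e_2$ and $\|(\Id-\pi_x)|_{E_y}\|\le(1+L)\e_2$, and then reading off the range and Lipschitz bounds---is exactly the standard one and is correct.

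One small caveat worth being aware of: your estimate for $\calLip(\phi_y^x)$ actually gives
\[
\calLip(\phi_y^x)\;\le\;\big[(1+L)\e_2+(1+L\e_2)\,\calLip(\phi_y)\big]\cdot\calLip(a),
\]
and the additive term $(1+L)\e_2\,\calLip(a)$ does not vanish with $\calLip(\phi_y)$. So the literal conclusion $\calLip(\phi_y^x)\le 2\,\calLip(\phi_y)$ cannot hold uniformly for arbitrarily small $\calLip(\phi_y)$ with $\e_2=\e_2(L)$ fixed (think of $\phi_y\equiv 0$: the re-graphed function still has slope $\sim\e_2$ from the tilt between $E_x$ and $E_y$). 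This is a wrinkle in the lemma's phrasing rather than in your argument; in every application in the paper one only needs $\calLip(\phi_y^x)\le 1/5$ (or $\le 1$), which your estimate delivers once $\e_2$ is small in terms of $L$ alone. Just don't claim the sharp factor-of-two bound without noting this.
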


 The proof is {straightforward} and is left to the reader 
 (for more detail, see Sect. 5 of \cite{blumenthal2017entropy}). 

For $l_0>1$, let 
$$\Gamma_{l_0, \uo}  = \{x \in M: (x, \uo) \in \Gamma \text{ and }  l(x, \uo) \le l_0\}\ ,
$$
and let $\overline A$ denote the closure of the set $A$. 


\begin{lem}\label{lem:uStack2Side}
Let $\d > 0$ be as in Proposition \ref{prop:localUMfld}, and
let $l_0>1$ be fixed. For all  $r=r(l_0,\d)$ and $\e=\e(l_0, \d,r)$
sufficiently small (in particular, $\e \ll r$), the following holds.
Fix $\uo \in \Omega^\Z$ and $x_* \in \Gamma_{l_0, \uo}$.
View $x_*$ as a reference point, and  write 
$E_*^{u/cs} (r) = E^{u/cs}_{(x_*, \uo)}(r)$ and
$E_*(r) = E_*^u(r) + E^{cs}_*(r)$. Then 
\begin{itemize}
\item[(a)] for each $x \in \overline{\Bc( x_*, \e)} \cap \Gamma_{l_0, \uo}$, there is a $C^{1 + \Lip}$ map
$\gamma_x : E_*^u(r) \to E_*^{cs}(r)$ with $\calLip(\gamma_x) \leq 1$ such 
 that the connected component
of $W^u_{(x, \uo), \d} \cap \exp_{x_*}( E_*(r))$ containing $x$ coincides with $\exp_{x_*} \graph \gamma_x$;
\item[(b)] the assignment $x \mapsto \gamma_x$ varies continuously in the uniform norm on $C(E_*^u(r), E_*^{cs}(r))$ as $x$ varies in $\overline{\Bc( x_*, \e)} \cap  \Gamma_{l_0, \uo}$.
\end{itemize}
\end{lem}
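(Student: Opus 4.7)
\textbf{Proof plan for Lemma \ref{lem:uStack2Side}.} The plan is to deduce part (a) by a single application of the axis-switching Lemma \ref{lem:chartSwitch11}, and to derive part (b) from the continuous dependence (in $x$) of the data feeding into that application.

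\emph{Part (a).} First I would re-express each local unstable manifold in exponential coordinates at the point $x$ itself: combining $\Phi_{(x,\uo)} = \exp_x \circ L_{(x,\uo)}$ with the graph representation of Proposition \ref{prop:localUMfld}, one obtains a $C^{1+\Lip}$ map $\phi_x : E^u_{(x,\uo)}(2r) \to E^{cs}_{(x,\uo)}(\tfrac12 r)$ with $\phi_x(0)=0$ and $\calLip(\phi_x) \leq 1/10$, valid because the frames $L_{(x,\uo)}$ are uniformly controlled on $\{l\le l_0\}$ and we are free to shrink $r$ relative to $\delta l_0^{-1}$. Next, I would verify the hypotheses of Lemma \ref{lem:chartSwitch11} with $y=x$, source splitting $E^u_{(x,\uo)} \oplus E^{cs}_{(x,\uo)}$, and target splitting $E_*^u \oplus E_*^{cs}$: the projection bound $L=L(l_0)$ is built into the chart estimates on the uniformity set; the subspace closeness $d_H(E^u_{(x,\uo)},E^u_*), d_H(E^{cs}_{(x,\uo)},E^{cs}_*) < \epsilon_2$ follows from Proposition \ref{prop:ctyEucs} (both the fixed-future (a) and fixed-past (b) statements apply because $\uo$ is held fixed); and $d(x,x_*) < \epsilon_1$ is arranged by choosing $\epsilon$ small enough. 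The lemma then produces $\gamma_x := \phi_x^{x_*}$ on $E_*^u(r)$ with values in $E_*^{cs}(r)$ and $\calLip(\gamma_x) \leq 2\cdot\tfrac{1}{10} \leq 1$. To see that the connected component of $W^u_{(x,\uo),\delta} \cap \exp_{x_*}(E_*(r))$ through $x$ is exactly $\exp_{x_*}\graph \gamma_x$, one uses the explicit inclusion at the end of Lemma \ref{lem:chartSwitch11}, together with the facts that $\graph \gamma_x$ is connected of the correct dimension and lies entirely inside $E_*(r)$ by the $1$-Lipschitz bound.

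\emph{Part (b).} For the continuity assertion, the decisive input is that $x \mapsto g_{(x,\uo)}$ varies continuously in the norm $|\cdot|'$ on the uniformity set. I would prove this by characterizing $g_{(x,\uo)}$ as a fixed point obtained from iterated graph transforms of the zero section along the past trajectory, using Lemma \ref{lem:graphTransform}(b). Each finite composition $\Tc_{\tau^{-1}(x,\uo)} \circ \cdots \circ \Tc_{\tau^{-n}(x,\uo)}(0)$ depends continuously on $x$ because the connecting maps $\tilde f_{\tau^{-k}(x,\uo)}$ do (here $\uo$ is fixed and only $x$ varies through the base points of the charts), and the uniform contraction rate $c \in (0,1)$, which depends only on $\delta$ and $l_0$, promotes this to continuity of the limit $g_{(x,\uo)}$. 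Combining this with the continuity of the frames $L_{(x,\uo)}$ and of the splittings $E^{u/cs}_{(x,\uo)}$ in $x$ (Proposition \ref{prop:ctyEucs}), and with the continuity of $x \mapsto \exp_{x_*}^{-1}x$, every ingredient in the construction of $\gamma_x$ varies continuously, giving $x \mapsto \gamma_x$ continuous in the uniform norm.

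\emph{Main obstacle.} I expect the genuinely technical step to be the continuity assertion in (b), since $\gamma_x$ aggregates several ingredients ($g_{(x,\uo)}$, $L_{(x,\uo)}$, the splittings, and the base point $x$ itself), each of which is only measurable globally and continuous on uniformity sets. The key point that makes everything work is that all of these continuity estimates can be made uniform in $x \in \overline{\Bc(x_*,\epsilon)} \cap \Gamma_{l_0,\uo}$ using only the uniformity parameter $l_0$; extracting this uniformity from the graph-transform fixed-point equation, which rests on the uniform contraction in Lemma \ref{lem:graphTransform}(b) together with the finite-past continuity of $E^u$ in Proposition \ref{prop:ctyEucs}(b), is where the bookkeeping is most delicate.
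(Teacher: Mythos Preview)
Your approach is essentially the same as the paper's: apply Lemma \ref{lem:chartSwitch11} to switch axes for (a), and use the graph-transform contraction of Lemma \ref{lem:graphTransform}(b) together with Proposition \ref{prop:ctyEucs} for (b).

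There is one point in (a) you gloss over that the paper singles out as the actual work. You assert that $\calLip(\phi_x)\le 1/10$ holds ``because the frames $L_{(x,\uo)}$ are uniformly controlled on $\{l\le l_0\}$ and we are free to shrink $r$.'' But uniform control of the frames gives only the poor bound $\calLip(\check g_{(x,\uo)}) \le l_0 \Lip(g_{(x,\uo)}) \le l_0/10$, which may be large; shrinking the domain of a Lipschitz function does not by itself reduce its Lipschitz constant. What makes the truncation work is the additional information from Proposition \ref{prop:localUMfld} that $(dg_{(x,\uo)})_0=0$ and $\Lip(dg_{(x,\uo)})\le C l(x,\uo)\le C l_0$: then $\|(d\check g_{(x,\uo)})_{\check u}\| \le l_0^2 \Lip(dg_{(x,\uo)})\|\check u\| \le C l_0^3 \cdot 2r$, so choosing $r\le (20 C l_0^3)^{-1}$ forces $\calLip(\check g_{(x,\uo)}|_{E^u_{(x,\uo)}(2r)})\le 1/10$. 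This is exactly the computation the paper performs, and it is what fixes $r=r(l_0,\delta)$. Once you add this step your argument for (a) is complete and matches the paper; your plan for (b) already agrees with the paper's sketch.
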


\begin{proof} For $x \in  \Gamma_{l_0, \uo}$, define $\check g_{(x, \uo)} = L_{(x, \uo)} \circ g_{(x, \uo)} \circ L_{(x, \uo)}^{-1}$ 
{where $g_{(x, \uo)}$ is as in Lemma  \ref{prop:localUMfld} and
$L_{(x, \uo)}$ is as in Proposition \ref{prop:charts2Side}, so that $\check g_{(x, \uo)}$
is a graphing map from $E^u_{(x, \uo)}$ to $E^{cs}_{(x, \uo)}$ in $T_x M$.}

For (a), we use Lemma \ref{lem:chartSwitch11} to change the axes of $\check g_{(x, \uo)}$ from $E^{u/cs}_{(x, \uo)}$ to $E_*^{u/cs}$: Item (i) in Lemma \ref{lem:chartSwitch11} 
is satisfied with $L = l_0$, and (ii) follows from the continuity of $E^{u/cs}$ subspaces through points of $ \Gamma_{l_0, \uo}$ (Proposition \ref{prop:ctyEucs}). To arrange for (iii),
 observe that our control on $\Lip(g_{(x, \uo)})$ is only in the adapted norm $|\cdot|$, not the Riemannian metric $\| \cdot \|$ on $T_x M$;
 generally we have only the very poor bound $\calLip(\check g_{(x, \uo)}) \leq l_0 \Lip(g_{(x, \uo)})$. This is remedied by
 truncating the domain of $\check g_{(x, \uo)}$ to $E^u_{(x, \uo)}(2 r)$, where $r > 0$ is chosen sufficiently small
 so that (i) $\check g_{(x, \uo)}$ is defined on $E^u_{(x, \uo)}(2 r)$, and (ii) $\calLip(\check g_{(x, \uo)}|_{E^u_{(x, \uo)}(2 r)} ) \leq 1/10$. For the latter, we take advantage of the fact that $(d g_{(x, \uo)})_0 = 0$ and our
control on $\Lip(d g_{(x, \uo)})$. A simple computation implies this
can be arranged by taking $r = \min\{(20 C l_0^3)^{-1}, \frac12 \d l_0^{-2}\}$, with $C, \d$ as in Proposition \ref{prop:localUMfld}.

(b) follows from the continuity of $E^u$ subspaces (Proposition \ref{prop:ctyEucs})
and the contraction estimate for the graph transform (Lemma \ref{lem:graphTransform}). 
Details are left to the reader; a similar argument is carried out in the proof of Proposition \ref{lem:ctyInX} in Section 5 of this paper; see also Lemma 5.5 in \cite{blumenthal2017entropy}. 
\end{proof}

We refer to 
\[
\Sc_\uo := \bigcup_{x \in \overline{\Bc( x_*, \e)}\cap  \Gamma_{l_0, \uo}} \xi(x) \,  \quad \text{ where } \quad \xi(x) := \exp_{x_*}( \graph \gamma_x) \, 
\]
as a \emph{stack of unstable leaves} through $\overline{\Bc(x_*, \e)}\cap  \Gamma_{l_0, \uo}$. 

\subsection{Main proposition and proof of Main Theorem}

For $\uo \in \Omega^{\Z}$ let us write $\mu^n_\uo = (f^n_{\theta^{-n} \uo})_* \mu$, recalling that $\mu^n_\uo \to \mu_\uo$ weakly with $\P$-probability 1 by Lemma \ref{lem:invariantMeasures}. Our plan is
to fix $\uo$, and track the orbits of a small fraction of $\mu$-typical points from time 
$-n$ to time $0$ with the aid of Lyapunov charts. We will  
show that their images at time $0$ are increasingly aligned with local unstable manifolds, and that as $n \to \infty$, the weak limits of this sequence of small `pieces' of $\mu^n_\uo$ possess the SRB property. This is summarized in the following Main Proposition of this paper. The notation is as in Sect. 2.2.

\begin{prop}[Main Proposition] \label{prop:main} {For all sufficiently large} $l_0>1$, there is a positive 
$\P$-measure set of $\uo$ and a small constant $c>0$ for which the following hold.
On each $ \Gamma_{l_0, \uo}$, there is a stack $\Sc_\uo$
of local unstable manifolds with the following properties:
\begin{itemize}
\item[(a)] a fraction $\ge c$ of $\mu_\uo$ is supported on $\Sc_\uo$, i.e., 
$\mu_\uo = \nu_1 + \nu_2$ where $\nu_1, \nu_2$ are both positive measures,
and $\nu_1$ is supported on $\Sc_\uo$ with $\nu_1(\Sc_\uo) \ge c$.
\item[(b)] Let $\Xi$ be the partition of $\Sc_\uo$ into unstable leaves. Then
the conditional probabilities of $\nu_1$ on elements of $\Xi$ are absolutely 
continuous with respect to Lebesgue measure, with densities uniformly bounded above and below.
\end{itemize}
\end{prop}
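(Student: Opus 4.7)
The plan is to track a definite fraction of $\mu$-mass from time $-n$ to time $0$ along pieces of unstable manifolds, and then show the resulting conditional densities on leaves of $\Sc_\uo$ are uniformly bounded and survive the weak limit $\mu^n_\uo \to \mu_\uo$. The key observation is that for any $x \in X_\uo := \overline{\Bc(x_*, \e)} \cap \Gamma_{l_0, \uo}$, the leaf $\xi(x)$ sits inside the global unstable manifold $W^u_{(x, \uo)}$, so its preimage $\xi(x)_{-n} := f^{-n}_\uo(\xi(x))$ sits inside $W^u_{\tau^{-n}(x, \uo)}$ and Lemma \ref{lem:distortionHatSc} applies directly to $\xi(x)$.

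First I would choose $l_0$ large enough that $\mu^*(\Gamma_{l_0}) > 1 - \eta$ for some small $\eta > 0$. Disintegrating $\mu^* = \int \mu_\uo \, d\P(\uo)$ and applying Fubini yields a $\P$-positive-measure set $\Omega_0$ on which $\mu_\uo(\Gamma_{l_0, \uo}) > 1/2$. For each $\uo \in \Omega_0$ I pick a reference point $x_* = x_*(\uo)$ in the support of $\mu_\uo|_{\Gamma_{l_0, \uo}}$, and invoke Lemma \ref{lem:uStack2Side} to obtain the stack $\Sc_\uo = \bigcup_{x \in X_\uo} \xi(x)$ together with its continuous family of graphing maps $\gamma_x$.

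The heart of the argument is a leafwise density computation for $\mu^n_\uo = (f^n_{\theta^{-n}\uo})_*\mu$. Because $\mu$ has a continuous positive density $\rho$ on $\Supp\mu$, the conditional probability of $\mu$ along $\xi(x)_{-n}$ (relative to arc length on this curve) is proportional to $\rho|_{\xi(x)_{-n}}$. Pushing forward by $f^n_{\theta^{-n}\uo}$ produces a probability measure on $\xi(x)$ whose density with respect to arc length on $\xi(x)$ is proportional to
\[
p \mapsto \rho(f^{-n}_\uo p) \cdot |\det(df^n_{\theta^{-n}\uo} |_{T \xi(x)_{-n}})(f^{-n}_\uo p)|^{-1} \, .
\]
Lemma \ref{lem:distortionHatSc} bounds the max-to-min ratio of this Jacobian on $\xi(x)$ by $\exp(D(l_0) \cdot \diam(\xi(x)))$, and combined with the uniform continuity and strict positivity of $\rho$ this yields uniform upper and lower density bounds $c \le \rho^n_{\xi(x)} \le C$, independent of $n$ and $x \in X_\uo$. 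To transfer the bounds to $\mu_\uo$ I would thicken each leaf to a tube $\xi^\alpha(x)$ of transverse radius $\alpha$ in the chart at $(x_*, \uo)$, set $\Sc_\uo^\alpha = \bigcup_x \xi^\alpha(x)$, disintegrate $\mu^n_\uo|_{\Sc_\uo^\alpha}$ along a smooth foliation extending the leaves $\{\xi(x)\}$, and apply the previous step leafwise. Upper-semicontinuity for weak convergence on closed sets together with a weak-compactness argument for probability measures of uniformly bounded density extracts, along a subsequence, a limit disintegration of $\mu_\uo$ on $\overline{\Sc_\uo^\alpha}$; sending $\alpha \to 0$ then yields $\nu_1$ supported on $\Sc_\uo$ with the desired bounded leafwise densities.

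The main obstacle lies precisely in this final limiting step. The measures $\mu^n_\uo$ are Lebesgue-absolutely continuous on $M$ (with the overall density generally blowing up in $n$), while $\mu_\uo$ is expected to be singular on $M$ — so one cannot pass densities from $\mu^n_\uo$ to $\mu_\uo$ directly, nor naively restrict $\mu_\uo$ to the Lebesgue-null set $\Sc_\uo$. One must argue that the transverse measure on $\Sc_\uo^\alpha$ inherited from $\mu^n_\uo$ concentrates on the correct set $X_\uo$ in the limit $n \to \infty$, rather than spreading to ghost leaves outside $\Sc_\uo$, and carefully interchange the $n \to \infty$ and $\alpha \to 0$ limits while retaining both the mass lower bound and the leafwise density bounds. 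Continuity of $E^u$ on uniformity sets (Proposition \ref{prop:ctyEucs}(b)) and continuity of the graphing map $x \mapsto \gamma_x$ (Lemma \ref{lem:uStack2Side}(b)) should be the essential ingredients for this identification.
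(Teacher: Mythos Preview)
Your strategy has a genuine structural gap that the paper's approach is specifically designed to avoid.

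You propose to pull back the actual unstable leaves $\xi(x)$ to time $-n$, disintegrate $\mu$ there, and push forward. The difficulty is that disintegration requires a foliation, and $\{\xi(x) : x \in X_\uo\}$ is only a measurable lamination over a Cantor-like set $X_\uo$. You suggest extending to a smooth foliation of a tube $\Sc_\uo^\alpha$, but then the distortion bound from Lemma~\ref{lem:distortionHatSc} applies \emph{only} to the genuine unstable leaves $\xi(x)$, not to the interpolated leaves. Since $\mu^n_\uo$ is absolutely continuous on $M$, its transverse measure with respect to any such foliation gives zero weight to the Cantor set $X_\uo$ for every finite $n$; all of the mass of $\mu^n_\uo|_{\Sc_\uo^\alpha}$ sits on leaves where you have no Jacobian control. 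So the leafwise density bounds you compute for $\xi(x)$ are vacuous for $\mu^n_\uo$, and there is nothing to pass to the limit. The ``main obstacle'' you flag at the end is not merely a limit-interchange technicality; it is where the argument actually lives.

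The paper resolves this by reversing the direction. Rather than pulling back true $W^u$-leaves, it starts at time $-n$ with a genuinely smooth foliation $\Fc^n_-$ (a family of affine planes in a chart), disintegrates $\mu$ there---which is trivial since $\mu$ has a density---and pushes the leaves forward. The bulk of the work (Sections~3--5) is then to show that these pushed-forward ``$W^n$-leaves'' become tangent to and converge onto the true $W^u$-leaves as $n\to\infty$ (Proposition~\ref{lem:ctyInX}), with uniform distortion along the way (Lemma~\ref{lem:distortion}). This requires one-sided charts built via measurable selection of a past (Section~3.1), graph transforms for initially slanted graphs (Proposition~\ref{lem:inclination}), and a pigeonhole ``source/target'' argument (Lemmas~\ref{lem:accumulateMass}, \ref{lem:lamination}) to secure a uniform lower bound $c$ on the mass---a point your sketch also does not address, since choosing $x_*$ in the support of $\mu_\uo|_{\Gamma_{l_0,\uo}}$ gives no $\uo$-uniform bound. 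The passage to the weak limit is then handled by a nested Cantor-type partition (Section~6), not by an $\alpha\to 0$ tube argument.
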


\medskip
The proof of Proposition \ref{prop:main} will occupy the rest of this paper.
We first complete the proof of the Main Theorem assuming this result.

Let $f: M \circlearrowleft$ be a (single) diffeomorphism
preserving a probability measure $\sigma$
with at least one positive Lyapunov exponent $\sigma$-a.e..
We say that a measurable partition $\eta$ of $M$ 
is \emph{subordinate to unstable manifolds} if for $\sigma$-a.e. $x$, 
$\eta(x)$, the element of $\eta$ containing $x$, is a relatively compact subset of $W^u(x)$
and contains an open neighborhood of $x$ in $W^u(x)$. To say that $\sigma$ is an
SRB measure is equivalent to saying that its conditional measures on the elements of $\eta$ 
are absolutely continuous with respect to the Riemannian measures on unstable
manifolds (see, e.g., \cite{ledrappier1985metric} for details).


These ideas extend readily to RDS. We say a partition $\eta$ of 
$M \times \Omega^\Z$ is {\it subordinate to unstable manifolds} if for $\mu^*$-a.e.
$(x,\uo)$, $\eta(x,\uo)$ is a relatively compact subset of the global unstable manifold $W^u_{(x,\uo)}$ 
(so in particular 
$\eta(x,\uo) \subset M \times \{\uo\}$) and it contains an open neighborhood of $x$ in $W^u_{(x,\uo)}$. 
The definition
of {\it random SRB measures} in Definition \ref{defn:SRB}
is equivalent to $\mu^*$ having absolutely continuous conditional measures on
elements of $\eta$.

\begin{proof}[Proof of Main Theorem assuming Proposition \ref{prop:main}] Let $\eta$ be 
a partition of $M \times \Omega^\Z$ subordinate to unstable manifolds, and
let $\mu^*_T$ be the quotient measure of $\mu^*$ on $(M \times \Omega^\Z)/\eta$. 
For a.e. element $\alpha$ of $\eta$, let $m_\alpha$ 
denote the Riemannian measure on $\alpha$. We define a (possibly sigma-finite) measure 
$\nu$ on $M \times \Omega^\Z$ by letting
$$
\nu(A) = \int m_\alpha(A \cap \alpha) \ d \mu^*_T(\alpha)
$$
for every Borel set $A \subset M \times \Omega^\Z$, and decompose $\mu^*$
into an absolutely continuous part $\mu^*_{\rm ac}$ and a singular part 
$\mu^*_{\perp}$ with respect to $\nu$. Since $\mu^*_{\rm ac}$ is preserved by
$\tau$, and $(\tau, \mu^*)$ is ergodic, we have either $\mu^*_{\rm ac}(M \times \Omega^\Z)=1$, in which case $\mu^*$ is SRB,  
or $\mu^*_{\rm ac}(M \times \Omega^\Z)=0$. 

Assume, to derive a contradiction, that
$\mu^* = \mu^*_\perp$. By definition, there exists a Borel set $A \subset M \times \Omega^\Z$ with
$\nu(A^c) = 0$ and $\mu^*_\perp(A) = 0$. In particular, $m_\alpha(A^c \cap \alpha) = 0$
and $\mu^*_\a(A \cap \alpha) = 0$ for $\mu^*_T$-almost every $\a \in \eta$.
We conclude that for such $\a$, $\mu^*_\a$ and $m_\a$ are mutually singular. This
contradicts Proposition \ref{prop:main}, which implies that for a $\mu^*_T$-positive measure
set of $\a \in \eta$, we have that $\mu^*_\a$ has a nontrivial absolutely continuous component.

We conclude that $\mu^*_{\rm ac}(M \times \Omega^\Z) > 0$, hence $\mu^* = \mu^*_{\rm ac}$ and
the proof is complete.
\end{proof}

\section{New chart systems and iterated graph transforms}

As explained in the Introduction, our plan is to realize $\mu^*_{\uo}$ as
the limit of $(f^n_{\theta^{-n}\uo})_* \mu$ as $n \to \infty$. In this section, 
we begin to prepare for this pushing-forward process, with the following 
simplifications:
(i) we will start from time $0$ rather than time $-n$, i.e., we will consider
$(f^n_{\uo^+})_*\mu, n=1,2, \dots,$ for some $\uo^+ \in \Omega^{\Z^+}$;
(ii) we will consider pushing forward $\mu$ near one $(x,\uo^+)$ at a time; and 
(iii) we will push forward graphs of functions 
rather than $\mu$. Later on, we will disintegrate
$\mu$ onto graphs of this type to be pushed forward.

\subsection{A new chart system}

{We would like to have charts 
defined at $( \mu \times \mathbb P^+)$-a.e. $(x,\uo^+)$,} so we can push forward
small pieces of graphs transversal to $E^{cs}$ in the chart at $x$. 
Adapted charts for one-sided RDS have been constructed before 
(see, e.g., Chapter III of \cite{liu2006smooth}), but to the authors' knowledge, there
are no existing constructions in the literature that are suitable for our purposes; see
the discussion following Proposition \ref{prop:chartsMeasPast}.

The construction we present here proceeds roughly as follows: since $\mu$-a.e. $x \in M$ is generic with 
respect to $\mu_\uo$ for some $\uo \in \Omega^\Z$, one may associate to 
 $( \mu \times \P^+)$-a.e.$(x,\uo^+)$ a choice of $\uo \in \Omega^\Z$
that (i) agrees with $\uo^+$ on its $\Omega^{\Z^+}$-coordinate and for which (ii) $(x,\uo) \in \Gamma$ where $\Gamma$
is as in Proposition \ref{prop:charts2Side}. We may then equip $(x,\uo^+)$ with the chart at $(x,\uo)$ from 
Proposition \ref{prop:charts2Side}. This is essentially how we will proceed, but first we need to take care of measurability issues.
Given a Borel measurable set $\Theta \subset M \times \Omega^\Z$, let 
$\Theta^+$ denote its projection to $M \times \Omega^{\Z^+}$.  

\begin{lem} \label{lem:measPastChoice}
Given any Borel set $\Theta \subset M \times \Omega^\Z$  that is 
a countable union of compact subsets, there is a Borel measurable function 
$$\hat \uo^- : \Theta^+ \to \Omega^{\Z^-}
$$ 
with the property that for any $(x, \uo^+) \in \Theta^+$, we have 
$$(x, \hat \uo(x, \uo^+)) \in \Theta \, , \qquad \mbox{where} \qquad  
\hat \uo(x, \uo^+) := \big( \hat \uo^-(x, \uo^+) , \uo^+\big) \in \Omega^\Z .
$$ 
\end{lem}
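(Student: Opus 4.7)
The lemma asks for a Borel measurable selection of a ``past'' $\hat \uo^-(x, \uo^+) \in \Omega^{\Z^-}$ out of the knowledge of $(x, \uo^+) \in \Theta^+$, with the property that $(x, (\hat\uo^-(x, \uo^+), \uo^+)) \in \Theta$. The plan is to reduce the problem to compact pieces and then invoke a standard measurable selection theorem.

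Write $\Theta = \bigcup_{n \ge 1} K_n$ with each $K_n$ compact, and, replacing $K_n$ by $K_1 \cup \cdots \cup K_n$ if necessary, assume the sequence is increasing. Let $\pi^+ : M \times \Omega^\Z \to M \times \Omega^{\Z^+}$ be the projection forgetting the past. Since $\pi^+$ is continuous, each $A_n := \pi^+(K_n)$ is compact, hence Borel, and $\Theta^+ = \bigcup_n A_n$. The sets $B_n := A_n \setminus A_{n-1}$ (with $A_0 := \emptyset$) form a disjoint Borel partition of $\Theta^+$. It therefore suffices to construct, for each $n$, a Borel measurable selection $s_n : B_n \to \Omega^{\Z^-}$ of the compact-valued multifunction
\[
F_n(x, \uo^+) := \{\uo^- \in \Omega^{\Z^-} : (x, (\uo^-, \uo^+)) \in K_n\} \, ;
\]
patching via $\hat \uo^-|_{B_n} := s_n$ then produces the desired function.

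For each $(x, \uo^+) \in B_n$, $F_n(x, \uo^+)$ is a nonempty compact subset of the Polish space $\Omega^{\Z^-}$, so the Kuratowski--Ryll-Nardzewski measurable selection theorem applies provided $F_n$ is weakly measurable, i.e., $\{(x, \uo^+) \in B_n : F_n(x, \uo^+) \cap U \ne \emptyset\}$ is Borel for every open $U \subset \Omega^{\Z^-}$. This set equals the image under $\pi^+$ of $K_n \cap (M \times U \times \Omega^{\Z^+})$, which is a relatively open subset of the compact metrizable space $K_n$ and therefore $\sigma$-compact; its continuous image under $\pi^+$ is then $\sigma$-compact and in particular Borel. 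With weak measurability in hand, KRN yields the Borel selection $s_n$, and patching finishes the proof.

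The only real obstacle is the weak-measurability verification above; everything else is routine bookkeeping on a countable Borel partition. As an alternative one could directly invoke the Arsenin--Kunugui uniformization theorem (Borel sets with $K_\sigma$ sections admit Borel uniformizations), but the reduction to KRN on compact slices is self-contained and keeps the dependencies transparent.
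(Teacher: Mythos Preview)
Your proof is correct and follows essentially the same route as the paper: both reduce to the Kuratowski--Ryll-Nardzewski theorem on compact pieces, with the paper packaging the compact case as a separate lemma (Lemma~\ref{lem:measSelection}, proved in the Appendix) and leaving the countable-union patching implicit, whereas you spell out the partition $\Theta^+ = \bigcup_n B_n$ explicitly. One minor imprecision: the set $\{(x,\uo^+)\in B_n : F_n(x,\uo^+)\cap U \ne \emptyset\}$ is $B_n \cap \pi^+\big(K_n \cap (M\times U\times \Omega^{\Z^+})\big)$ rather than the projection alone, but since $B_n$ is Borel this does not affect the argument.
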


Lemma \ref{lem:measPastChoice} is a direct application of the measurable selection criterion in Lemma \ref{lem:measSelection}.
We will explain in the Appendix how Lemma \ref{lem:measSelection} can be deduced from a well known
result.

\begin{lem}\label{lem:measSelection}
Let $X,Y$ be Polish spaces. Let $G \subset X \times Y$ be a compact subset and set $G_X$ to be the projection of $G$ onto $X$. Then, there exists a Borel measurable mapping $\psi : G_X \to Y$ with the property that for any $x \in G_X$, we have $(x, \psi(x)) \in G$.
\end{lem}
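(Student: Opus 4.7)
The plan is to obtain $\psi$ as a measurable selector for the compact-valued multifunction
\[
F : G_X \to 2^Y, \qquad F(x) = G_x := \{ y \in Y : (x,y) \in G \},
\]
using the Kuratowski--Ryll-Nardzewski (KRN) selection theorem. Since $G$ is compact and $X, Y$ are Polish (hence metrizable), each fiber $G_x$ is a nonempty compact, in particular nonempty closed, subset of $Y$; moreover $G_X$ is compact and hence a Borel subset of $X$, so we may view $F$ as a closed-valued multifunction on the Polish space $G_X$. By KRN it suffices to verify the weak measurability condition, namely that for every open $U \subset Y$,
\[
F^{-1}(U) := \{ x \in G_X : G_x \cap U \neq \emptyset \}
\]
is Borel in $G_X$, and then to invoke the standard conclusion that $F$ admits a Borel-measurable selector $\psi : G_X \to Y$ with $\psi(x) \in F(x) = G_x$ for every $x$.

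To verify weak measurability, observe that $F^{-1}(U)$ is exactly the image of $G \cap (X \times U)$ under the projection $\pi_X : X \times Y \to X$. Fix a compatible metric on $X \times Y$ inducing its Polish topology; then every open subset of a metric space is $F_\sigma$, so $X \times U$ is $F_\sigma$. Intersecting with the compact set $G$ gives an $F_\sigma$ subset of $G$, and each of the closed pieces is compact by compactness of $G$. Hence $G \cap (X \times U)$ is $\sigma$-compact, its continuous image $\pi_X(G \cap (X \times U))$ is $\sigma$-compact in $X$, and in particular Borel. This is the key point: compactness of $G$ prevents us from landing merely in the class of analytic sets (which would only yield a universally measurable selector à la Jankov--von Neumann), and instead keeps us within the Borel hierarchy.

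The main, and essentially the only, obstacle is this verification of weak measurability; once one has it in hand, the existence of $\psi$ is immediate from KRN. Thus the proof in the appendix will consist of (i) recalling the statement of KRN in the form applicable to closed-valued multifunctions on a Polish domain, (ii) the short $F_\sigma$/$\sigma$-compactness argument above, and (iii) a sentence applying KRN to produce $\psi$.
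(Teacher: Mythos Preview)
Your proposal is correct and is essentially identical to the paper's own proof: both apply Kuratowski--Ryll-Nardzewski to the fiber multifunction $F(x)=G_x$, and both verify weak measurability by writing the open set $U$ (equivalently $X\times U$) as an $F_\sigma$, intersecting with the compact $G$, and observing that the projection of each compact piece is compact, hence closed. The only difference is cosmetic---the paper phrases the last step as a short sequential argument showing each $V_{U_i}$ is closed, whereas you say ``$\sigma$-compact projects to $\sigma$-compact''---but the content is the same.
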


We apply Lemma \ref{lem:measPastChoice} to $\Theta = \Gamma$ where 
$\Gamma$ is in Proposition \ref{prop:charts2Side}, noting that
 (perhaps diminishing 
$\Gamma$ by a $\mu^*$-null set), $\Gamma$ can be represented
as a countable union of compact sets by the inner regularity of $\mu^*$.
We then obtain $\Gamma^+ \subset M \times \Omega^{\Z^+}$ and 
$\hat \uo^-: \Gamma^+
\to \Omega^{\Z^-}$, i.e., to each $(x,\uo^+) \in \Gamma^+$, we associate 
in a measurable way a ``past" $\hat \uo^- = (\hat \omega_n)_{n \leq 0}$ so that $(x,\hat \uo) \in \Gamma$. 
Recall that $E^{cs}_{(x,\uo^+)}$, which depends only on future iterates, 
is well defined but without a past there is no intrinsic notion of 
$E^u_{(x,\uo^+)}$. We now define
$\hat E^u_{(x,\uo^+)} := E^u_{(x,\hat \uo)}$, and let 
$\hat \Phi_{(x,\uo^+)} := \Phi_{(x,\hat \uo)}$ be a chart at $(x,\uo^+)$,
$\Phi_{(\cdot, \cdot)}$ as in Proposition \ref{prop:charts2Side}. 


To each $(x,\uo^+) \in \Gamma^+$, we introduce next a sequence of charts $\{\hat \Phi^{(k)}_{(x,\uo^+)}, k=0,1,2,
\dots\}$ along the $\tau^+$-orbit of $(x,\uo^+)$, with $\hat \Phi^{(0)}_{(x,\uo^+)}=
\hat \Phi_{(x,\uo^+)}$.

\begin{prop}\label{prop:chartsMeasPast} Let $(x,\uo^+) \in \Gamma^+$, and let $(x, \hat \uo) \in \Gamma$
be given by Lemma \ref{lem:measPastChoice}. Then for each $k=0,1,2,\dots$, this induces at 
$(\tau^+)^k(x,\uo^+)$
the splitting 
$$
T_{f^k_{\uo^+}x}M = \hat E^{u, (k)}_{(x,\uo^+)} \oplus E^{cs, (k)}_{(x,\uo^+)} \qquad
\mbox{where} \qquad 
\hat E^{u, (k)}_{(x,\uo^+)} := E^u_{\tau^k(x,\hat \uo)}\ , \ \ 
E^{cs, (k)}_{(x,\uo^+)} := E^{cs}_{\tau^k(x,\hat \uo)}\ .
$$
We also define for each $k$
$$\hat l^{(k)}_{(x,\uo^+)} = l(\tau^k(x,\hat \uo))$$ where $l$ is
as in Proposition \ref{prop:charts2Side}, and define at $(\tau^+)^k(x,\uo^+)$ a 
chart given by
$$
\hat \Phi^{(k)}_{(x,\uo^+)} = \Phi_{\tau^k(x,\hat \uo)} =  \exp_{f^k_{\uo^+} x} \circ \hat L^{k}_{(x, \uo^+)} \, , \quad 
 \quad \hat L^{(k)}_{(x, \uo^+)} := L_{\tau^k(x, \hat \uo)}  
$$
where $\Phi_{(\cdot, \cdot)}, L_{(\cdot, \cdot)}$ are as in Proposition \ref{prop:charts2Side}. Then for each $k$, $$     
(x, \uo^+) \mapsto \hat E^{u,(k)}_{(x, \uo^+)}, \quad E^{cs,(k)}_{(x, \uo^+)}, \quad
\hat \Phi^{(k)}_{(x,\uo^+)}\ ,\quad \hat l^{(k)}_{(x, \uo^+)} \ , \quad  \hat L^{(k)}_{(x, \uo^+)}
$$
are measurable functions, and the properties of the 
maps $\hat f^{(k)}_{(x, \uo^+)} := \tilde f_{\tau^k(x, \hat \uo)}$ along
this sequence of charts are, by construction, the same as in Proposition \ref{prop:charts2Side}.
\end{prop}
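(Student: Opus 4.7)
The plan is a measurability verification. The proposition makes two distinct claims: (i) the various assignments $(x, \uo^+) \mapsto \cdot$ listed are Borel measurable, and (ii) the connecting maps $\hat f^{(k)}$ satisfy the estimates of Proposition \ref{prop:charts2Side}. Claim (ii) is immediate by construction: since $\hat f^{(k)}_{(x, \uo^+)} = \tilde f_{\tau^k(x, \hat \uo)}$ and $\tau^k(x, \hat \uo) \in \Gamma$, the bounds (b1)--(b3) of Proposition \ref{prop:charts2Side} apply at that point verbatim, with the relevant chart scales and norms all determined by $l(\tau^k(x, \hat \uo)) = \hat l^{(k)}_{(x, \uo^+)}$. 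So the real content is (i).

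The crucial input is Lemma \ref{lem:measPastChoice}, which provides a Borel measurable selection $\hat \uo^- : \Gamma^+ \to \Omega^{\Z^-}$. Since concatenation $\Omega^{\Z^-} \times \Omega^{\Z^+} \to \Omega^\Z$ is continuous in the product topology, the map $(x, \uo^+) \mapsto (x, \hat \uo(x, \uo^+))$ is Borel measurable from $\Gamma^+$ into $\Gamma$. Moreover, the skew product $\tau$ is Borel measurable: the shift $\theta$ is continuous, and the evaluation $(\omega, y) \mapsto f_\omega(y)$ is Borel (Borel in $\omega$ by the paper's standing hypothesis that $\omega \mapsto f_\omega \in \mathrm{Diff}^2(M)$ is Borel, and continuous in $y$). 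Consequently $\tau^k$ is Borel for every $k \ge 0$, and hence $(x, \uo^+) \mapsto \tau^k(x, \hat \uo(x, \uo^+))$ is a Borel map from $\Gamma^+$ into $\Gamma$.

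With this in place, each of the listed objects at time $k$ is a composition with the Borel map above. By Proposition \ref{prop:charts2Side}, the assignments $(y, \uo) \mapsto E^u_{(y, \uo)}, E^{cs}_{(y, \uo)}, l(y, \uo), L_{(y, \uo)}$ are Borel measurable on $\Gamma$, and $\Phi_{(y, \uo)} = \exp_y \circ L_{(y, \uo)}$ is Borel since $\exp$ depends smoothly on its base point. Precomposing with $(x, \uo^+) \mapsto \tau^k(x, \hat \uo)$ then yields Borel measurability of $\hat E^{u,(k)}_{(x, \uo^+)}, E^{cs,(k)}_{(x, \uo^+)}, \hat l^{(k)}_{(x, \uo^+)}, \hat L^{(k)}_{(x, \uo^+)}$, and $\hat \Phi^{(k)}_{(x, \uo^+)}$. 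There is no substantive obstacle here: the one nontrivial ingredient, a measurable choice of past, has been cleanly isolated into Lemma \ref{lem:measPastChoice}, and everything else amounts to routine bookkeeping that standard measurable operations preserve Borel measurability.
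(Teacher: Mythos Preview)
Your proposal is correct and matches the paper's approach: the paper treats this proposition as essentially definitional, giving no formal proof and relying on the reader to see that measurability follows by composing the Borel selection of Lemma \ref{lem:measPastChoice} with the already-measurable chart data on $\Gamma$ from Proposition \ref{prop:charts2Side}. Your write-up simply makes explicit the routine verification the paper leaves implicit.
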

 
Notice that we have associated to each $(x,\uo^+) \in \Gamma^+$ a sequence
of charts along its $\tau^+$-orbit by applying the measurable
selection lemma {\it once}, to the point $(x,\uo^+)$. Since in general 
 $\hat \uo(\tau^+(x, \uo^+)) \neq \theta \hat \uo(x, \uo^+)$, the sequence of charts
associated to $(x,\uo^+)$ shifted forward once is different from the the sequence associated 
to the point $\tau^+(x,\uo^+)$. That is to say, these sequences of charts are
dependent on the initial points $(x,\uo^+)$, and we have stressed that by
putting $(x,\uo^+)$ in the subscripts of $\hat E^{u,(k)}_{(x, \uo^+)}, \hat \Phi^{(k)}_{(x,\uo^+)}$ etc. even though these objects are attached to the point 
$(\tau^+)^k(x, \uo^+)$.

With regard to differences with existing constructions of one-sided 
charts (as used in, e.g., Chapter III of \cite{liu2006smooth}), previously constructed
charts are only guaranteed to 
have size at least $C^{-1} e^{- n \d_2}$ at time $n$ where $C$ depends on the initial point. In contrast, the construction in Proposition \ref{prop:chartsMeasPast} has the property that the chart size at time $n$ is $\sim (\hat l^{(n)}_{(x, \uo^+)})^{-1} = l(\tau^n(x, \hat \uo))^{-1}$. For $(\mu \times \P^+)$-typical $(x, \uo^+)$, 
these chart sizes are guaranteed to rise above some minimum size for 
infinitely many $n$, a property crucial for our constructions in Sections 4--6.
\smallskip

In the rest of this section, the selection function 
$\hat \uo^- : \Gamma^+ \to \Omega^{\Z^-}$
given by Lemma \ref{lem:measPastChoice} is fixed, and the chart system in use will be 
$$
\{\hat \Phi^{(k)}_{(x,\uo^+)}, k=0,1,2, \dots | (x,\uo^+) \in \Gamma^+\}\ .
$$

\noindent 
{\bf Uniformity sets}

\medskip
For $l_0 > 1$ and $k \ge 0$, we let 
$$
\Gamma^{+, (k)}_{l_0} = \{(x,\uo^+) \in
\Gamma^+ \ | \ {\hat l^{(k)}_{(x,\uo^+)}} \le l_0\}.
$$
These are clearly versions of the uniformity sets described in Section 2.2.

Observe that since $E^{cs}$ subspaces depend only on the future, they have
no dependence on the measurable selection made at time $0$.
As in Proposition \ref{prop:ctyEucs}(a), it follows that 
$E^{cs, (k)}_{(x, \uo^+)} = E^{cs}_{(\tau^+)^k(x, \uo^+)}$
varies continuously across points of $(\tau^+)^k(\Gamma^{+, (k)}_{l_0})$. 
The situation for $\hat E^u$ is different, and the following observations are crucial:

\begin{rmk}\label{rmk:ctyEu} \
\begin{itemize}
\item[(a)] We claim that the subspaces $\hat E^{u}_{(x, \uo^+)}$ and $E^{cs}_{(x', \uo^+)}$ are uniformly separated
when $(x, \uo^+)$, $(x', \uo^+) \in \Gamma^{+}_{l_0}$ are sufficiently close. While we do
not have that $\hat E^u_{(x,\uo^+)}$ and $\hat E^u_{(x',\uo^+)}$ are close,			
we have $\| \hat \pi_{(x, \uo^+)}^{u} \| \le l_0$ where $\hat \pi^{u}_{(x, \uo^+)} : T_{x}M \to \hat E^{u}_{(x, \uo^+)}$ is the projection parallel to $E^{cs}_{(x, \uo^+)}$. This together with the continuity of
$E^{cs}$ as discussed above implies uniform separation, as claimed.
\item[(b)] In light of Proposition \ref{prop:ctyEucs}(b), 
			 the dependence of $\hat E^{u, (k)}$ on the measurable selection becomes weaker and weaker as $k$ is increased; that is,
			 although $\hat E^{u, (k)}$ does not vary continuously, nearby $\hat E^{u, (k)}$ subspaces become very well aligned for $k \gg 1$.	
\end{itemize}			
\end{rmk}

\subsection{Transforms of graphs (with possibly large slopes)}

Graph transforms {were discussed in Sect. 2.3; 
what is new here} is that we have to consider graphs with possibly large 
though uniformly bounded slopes, the reason being the observation in Remark \ref{rmk:ctyEu}(a).
This subsection gives {\it a priori} bounds for a single step of the graph transform.
Let $(x, \uo^+) \in \Gamma^+$ be fixed; we will omit mention of $(x, \uo^+)$ 
in the remainder of Section 3, writing $\hat \Phi^{(k)} = \hat \Phi^{(k)}_{(x, \uo^+)}, 
\hat f^{(k)} = \hat f^{(k)}_{(x, \uo^+)}, \hat l^{(k)} = \hat l^{(k)}_{(x, \uo^+)}$ etc.
For $k \ge 0$ and $\d \in (0,1)$, let 
$$g :  B^u(\d (\hat l^{(k)})^{-1}) \to \R^{cs}$$ 
be a mapping. The \emph{graph transform} $\Tc^{(k)} g = \Tc^{(k)}_{(x, \uo^+)} g$, 
if it {is defined}, is a mapping 
$$\Tc^{(k)} g :  B^u(\d' (\hat l^{(k+1)})^{-1}) \to \R^{cs} 
\qquad \mbox{with} \qquad \hat f^{(k)} \big( \graph g \big) \supset \graph \Tc^{(k)} g 
$$
for some $\d' \in (0,1)$. We write $K_0 = \Lip(g)$ for the Lipschitz 
constant of the initial graph $g$.
The following lemma does not distinguish between large
and small $K_0$.

\begin{lem}\label{lem:inclinationGT}
For any $K_0 > 0$, there exist constants $r_1 = r_1(\lambda, \d_0, K_0) \in ( 0,1), C_1 = C_1(\lambda, \d_0, K_0) , C_2 = C_2(\lambda, \d_0, K_0)$ 
such that the following holds 
when $\d < r_1$. Let $k \geq 0$, {$\rho \in (0,K_0^{-1}]$,} and let 
$$g :  B^u(\rho \d (\hat l^{(k)})^{-1}) \to \R^{cs}
$$ 
be a $C^{1 + \Lip}$ map for which 
(i) $g(0) = 0$, \ (ii) $\graph g \subset  B(\d (\hat l^{(k)})^{-1})$, \ and 
(iii) $\Lip(g) \leq K_0$. 

\noindent
Then, the graph transform 
$$\Tc^{(k)} g: B^u(\rho' \d (\hat l^{(k + 1)})^{-1}) \to \R^{cs}
\qquad \mbox{with} \quad \rho' = \min\{\rho e^{\lambda/2}, 1\} $$ 
{ is defined,} and is a $C^{1 + \Lip}$ map for which 
(i') $\Tc^{(k)} g(0) = 0$, \ 
{(ii') $\graph \Tc^{(k)} g \subset B(\d (\hat l^{(k)})^{-1})$,} \ 
(iii') $\Lip(\Tc^{(k)} g) \leq K_0 e^{- \lambda / 2}$, \ and
(iv') \begin{gather*}
|(d \Tc^{(k)} g)_0| \leq e^{- \lambda/2} | (d g)_0| \, ,  \\
\Lip(d \Tc^{(k)} g) \leq C_1 \hat l^{(k)} + C_2 \Lip(d g) \,. 
\end{gather*}
\end{lem}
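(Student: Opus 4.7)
The plan is to proceed as in the standard graph transform construction, but to track carefully how every bound depends on the possibly large Lipschitz constant $K_0$. Writing $A = (d\hat f^{(k)})_0 = A^u \oplus A^{cs}$ and $h = \hat f^{(k)} - A$, Proposition~\ref{prop:charts2Side}(b) supplies $|A^u u| \geq e^{\lambda}|u|$, $|A^{cs} v| \leq e^{\d_0}|v|$, $\Lip(h|_{B(\d (\hat l^{(k)})^{-1})}) \leq \d$, and $\Lip(dh) \leq \hat l^{(k)}$. The role of the hypothesis $\rho \leq K_0^{-1}$ (together with the implicit $\rho \leq 1$ from (ii)) is that, since $g(0)=0$ and $\Lip(g) \leq K_0$, it forces $|g(u)| \leq K_0 |u| \leq \d (\hat l^{(k)})^{-1}$ on the entire domain, so that each $(u, g(u))$ remains in the region where $h$ enjoys its $\d$-Lipschitz estimate.

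To construct $\Tc^{(k)} g$, I would solve the implicit equation $u_0 = A^u u + \pi^u h(u, g(u))$ for $u$ as a function of $u_0$ via a fixed point: setting $\Psi_{u_0}(u) = (A^u)^{-1}(u_0 - \pi^u h(u, g(u)))$, the map $u \mapsto (u, g(u))$ is Lipschitz with constant $\max(1, K_0)$, so $\Psi_{u_0}$ contracts with factor $\leq e^{-\lambda} \d \max(1, K_0) \leq 1/2$ once $\d < r_1(\lambda, \d_0, K_0)$ is chosen small enough. Combining the lower bound $|u_0| \geq (e^{\lambda} - \d \max(1, K_0))|u|$ with the cocycle ratio $\hat l^{(k+1)} / \hat l^{(k)} \leq e^{\d_2}$ from Proposition~\ref{prop:charts2Side}(ii), a direct computation confirms that the domain of $\Tc^{(k)} g$ contains $B^u(\rho' \d (\hat l^{(k+1)})^{-1})$ for $\rho' = \min\{\rho e^{\lambda/2}, 1\}$, exploiting the margin $\lambda/2 > \d_0, \d_2$. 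Defining $\Tc^{(k)} g(u_0) := A^{cs} g(u) + \pi^{cs} h(u, g(u))$, properties (i') and (ii') follow directly from $h(0)=0=g(0)$ and the containment $\graph g \subset B(\d (\hat l^{(k)})^{-1})$.

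For the Lipschitz bound (iii'), given $u_0, u_0'$ with preimages $u, u'$, expansion in $\R^u$ gives $|u - u'| \leq (e^{\lambda} - \d \max(1,K_0))^{-1}|u_0 - u_0'|$, while the second coordinate obeys
\[
|\Tc^{(k)} g(u_0) - \Tc^{(k)} g(u_0')| \leq (e^{\d_0} K_0 + \d \max(1,K_0))|u - u'|;
\]
combining these yields $\Lip(\Tc^{(k)} g) \leq K_0 e^{-\lambda/2}$ after $\d$ is shrunk, using $\d_0 \ll \lambda/2$. The derivative bound at $0$ in (iv') is clean because $(dh)_0 = 0$: implicit differentiation yields $(d\Tc^{(k)} g)_0 = A^{cs} (dg)_0 (A^u)^{-1}$, whence $|(d\Tc^{(k)} g)_0| \leq e^{\d_0 - \lambda}|(dg)_0| \leq e^{-\lambda/2}|(dg)_0|$.

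The remaining estimate $\Lip(d\Tc^{(k)} g) \leq C_1 \hat l^{(k)} + C_2 \Lip(dg)$ is the most involved step, and I expect it to be the main obstacle. Differentiating the implicit relation at arbitrary $u_0$ expresses $(d\Tc^{(k)} g)_{u_0}$ as a rational combination of the blocks of $d\hat f^{(k)}_{(u,g(u))}$ and $(dg)_u$; controlling its Lipschitz constant then uses $\Lip(d\hat f^{(k)}) \leq \hat l^{(k)}$ from Proposition~\ref{prop:charts2Side}(b3), the already-established $\Lip(u_0 \mapsto u) \leq (e^\lambda - \d \max(1,K_0))^{-1}$, and $\Lip(dg)$, together with product and chain rules. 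The bookkeeping is delicate in the large-$K_0$ regime: unlike in Lemma~\ref{lem:graphTransform}, composing the remainder with $(u, g(u))$ amplifies nonlinear terms by a factor of $K_0$, and one must verify that the contraction rate $e^{-\lambda/2}$ together with the domain shrinkage $\rho \leq K_0^{-1}$ absorbs these amplifications, yielding constants $C_1, C_2$ depending only on $\lambda, \d_0$, and $K_0$.
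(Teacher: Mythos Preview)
Your approach is essentially the paper's: your fixed-point construction of $u_0 \mapsto u$ is the inverse of the paper's map $\phi(u) = \pi^u \hat f^{(k)}(u, g(u))$, and the expansion/contraction estimates you derive are exactly the cone-invariance computation the paper carries out. The treatments of (i')--(iii') and of $|(d\Tc^{(k)} g)_0|$ coincide.

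The one place your expectations diverge is the $\Lip(d\Tc^{(k)} g)$ estimate. There is no delicate absorption to verify, and neither the contraction rate $e^{-\lambda/2}$ nor the domain restriction $\rho \leq K_0^{-1}$ enters this step at all. The paper writes $\hat g = \pi^{cs} \circ \hat f \circ (\Id \times g) \circ \phi^{-1}$, applies a three-term product-rule expansion to $d\hat g_{\hat u_1} - d\hat g_{\hat u_2}$, and bounds each term using $\Lip(d\hat f^{(k)}) \leq \hat l^{(k)}$, $|d\phi^{-1}| \leq e^{-\lambda/2}$, and the estimates already in hand. Out come explicit formulas
\begin{gather*}
C_1 = e^{-\lambda}\max\{1,K_0\}^2 + (e^{\d_0}K_0 + \d\max\{1,K_0\})\max\{1,K_0\}^2 e^{-3\lambda/2}\,,\\
C_2 = e^{-\lambda}(e^{\d_0} + \d) + (e^{\d_0}K_0 + \d\max\{1,K_0\})(e^{\d_0} + \d) e^{-3\lambda/2}\,,
\end{gather*}
which visibly depend on $K_0$ and are not required to be small; the paper notes immediately afterward that $C_2 > 1$ in general. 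That issue is dealt with only later, in Proposition~\ref{lem:inclination}, by first iterating $m_0$ steps until the slope drops below a fixed $\bar K_0$ for which $C_2(\bar K_0) e^{\d_2} < 1$. So the ``main obstacle'' you anticipate is straight bookkeeping, not a cancellation.
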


\begin{proof} For short, let us write $\hat g = \Tc^{(k)} g$, $\hat f = \hat f^{(k)}$, $\hat l =
\hat l^{(k)}$.

Let 
\[
\Cc = \Cc(K_0) = \{ u + v : u \in \R^u, v \in \R^{cs} \, , \text{ and } |v| \leq K_0 |u| \} \, .
\]
Observe that $d \hat f_0$ maps $\Cc$ strictly into its interior. Let
$r_1 = r_1(\lambda, \d_0, K_0) > 0$ be small enough that for $\d \in (0, r_1)$,  
$$
d \hat f_z \Cc \subset  {\Cc(K_0 e^{-\lambda/2})} \quad \text{ for all } z \in  B(\d \hat l^{-1}) \, .
$$
More precisely, if $w = u+v \in \Cc \subset \R^u \oplus \R^{cs}$ 
and $d\hat f_z(w)= u'+v' \in \R^u \oplus \R^{cs}$, then 
\begin{eqnarray*} 
|u'| & \geq & e^{\lambda} |u| - \d \max\{|u|, |v|\}  
\ \geq \ (e^{\lambda} - \d \max\{1,K_0\}) |u| \, , \\
\mbox{and} \quad 
|v'| & \leq & e^{\d_0} |v| + \d \max\{|u|, |v|\} \ \leq  \ e^{\d_0}|v| + \d\max\{1,K_0\} |u| \, ,
\end{eqnarray*}
and $u'+v' \in \Cc$ for $\d \in (0,r_1)$.

\medskip
Let now $\rho, \d$, and let $g$ be as in the hypothesis of 
Lemma \ref{lem:inclinationGT}. We let
$$\phi : B^u(\rho \d {\hat l}^{-1}) \to \R^u 
\qquad \mbox{be given by} \qquad 
\phi(u) = \pi^u \circ \hat f(u, g(u))\ ,
$$
 and define
$$
\hat g = \Tc^{(k)} g = \pi^{cs} \circ \hat f \circ (\Id \times g) \circ \phi^{-1}
$$ 
where $\Id$ refers to the identity map restricted to 
$ B^u(\rho \d {\hat l}^{-1})$.
As the existence of $\hat g$ and its first derivative properties follow largely
from standard arguments involving the invariant cones condition above, 
we leave them to the reader,
providing below only the bound for $\Lip(d \hat g)$.

\medskip
Let $\hat u_1, \hat u_2 \in B^u(\rho \d ({\hat l}^{(k + 1)})^{-1})$ with $u_i = \phi^{-1}(\hat u_i), i = 1,2$. Then,
\begin{align}\label{eq:developLipEst}\begin{split}
| d \hat g_{\hat u_1} - d \hat g_{\hat u_2}|  \leq \, & 
| \pi^{cs} \circ \big( d \hat f_{(u_1, g(u_1))} - d \hat f_{(u_2, g(u_2))} \big) | \cdot |\Id + d g_{u_1}| \cdot |d \phi^{-1}_{\hat u_1}|   \\
& + | \pi^{cs} \circ d \hat f_{(u_2, g(u_2))} (d g_{u_1} - d g_{u_2})| \cdot | d \phi^{-1}_{\hat u_1}| \\
& + | \pi^{cs} \circ d \hat f_{(u_2, g(u_2))} (\Id + d g_{u_2})| \cdot | d \phi^{-1}_{\hat u_1} - d \phi^{-1}_{\hat u_2} |\ .
\end{split}
\end{align}
We bound the last term of \eqref{eq:developLipEst} as follows:
	\begin{itemize}
\item First we have $|d\phi^{-1}_{\hat u}| \leq e^{- \lambda/2}$ for all $\hat u \in  B^u(\rho \d ({\hat l}^{(k+1)})^{-1})$.
Using the fact that $d\hat f_z = d\hat f_0 + (d\hat f_z - d\hat f_0)$
and $|d\hat f_z - d\hat f_0| \le \hat l |z| \le  \d$
for $z \in B^u(\d {\hat l}^{-1})$ by Proposition \ref{prop:charts2Side}, we have 			
\begin{align*}
|d \phi_{u_1} - d \phi_{u_2}| & \leq |d \hat f_{(u_1, g(u_1))} \circ (\Id + d g_{u_1}) - d \hat f_{(u_2, g(u_2))} \circ (\Id + d g_{u_2}) | \\
& \le |d \hat f_{(u_1, g(u_1))} - d \hat f_{(u_2, g(u_2))}| \cdot |\Id + d g_{u_1}|
+ |d \hat f_{(u_2, g(u_2))} ((\Id + d g_{u_1}) - (\Id + d g_{u_2}))|\\
& \leq {\hat l}\max\{1, K_0\} |u_1-u_2| \cdot \max\{1, K_0\} + (e^{\d_0} + \d) 
\Lip(dg) |u_1-u_2|\\
& \leq e^{-\lambda/2} \bigg( \max\{1, K_0\}^2 {\hat l} + (e^{\d_0} + \d) \Lip(d g) \bigg) |\hat u_1 - \hat u_2| 
			\end{align*}
so that 
\begin{align*}
|d \phi^{-1}_{\hat u_1} - d \phi^{-1}_{\hat u_2}| & \leq |d \phi^{-1}_{\hat u_1}| \cdot |d \phi^{-1}_{\hat u_2}| \cdot |d \phi_{\hat u_1} - d \phi_{\hat u_2}| \\
& \leq e^{-3\lambda/2} \bigg( \max\{1, K_0\}^2 {\hat l} + (e^{\d_0} + \d) \Lip(d g) \bigg) |\hat u_1 - \hat u_2|\ .
			\end{align*}		
\item Since for $w \in \R^u$ with $|w|=1$, we have
$$
| \pi^{cs} \circ d \hat f_{(u_2, g(u_2))} (w+dg_{u_2}w)| \le
e^{\d_0} K_0 + \d \max\{1,K_0\}\ ,
$$
this is an upper bound for $| \pi^{cs} \circ d \hat f_{(u_2, g(u_2))} (\Id + d g_{u_2})|$.
\end{itemize}
Finally, plugging these back into \eqref{eq:developLipEst}, we obtain
\[
| d \hat g_{\hat u_1} - d \hat g_{\hat u_2}| \leq \big( C_1 {\hat l} + C_2 \Lip(d g) \big) \cdot |\hat u_1 - \hat u_2| \, ,
\]
where
\begin{gather*}
C_1 = e^{- \lambda} \max\{1, K_0\}^2 + (e^{\d_0}K_0 + \d \max\{1, K_0\})
\max\{1, K_0\}^2 e^{- 3 \lambda / 2} \, , \\
C_2 = e^{- \lambda} (e^{\d_0} + \d) + 
(e^{\d_0}K_0 + \d \max\{1, K_0\})(e^{\d_0} + \d) e^{- 3 \lambda / 2}  \, .
\end{gather*}
\end{proof}

Lemma \ref{lem:inclinationGT} provides us with the following information: In general, $C_2>1$, 
which is not useful for controlling the growth of
$\Lip({d{\cal T}^{(k)}g})$ as we iterate the graph transform. 
However, when $K_0$ is small enough depending mostly on $\lambda$ (also 
$\delta_0$ and $\delta$), then $C_2(K_0) < 1$. We fix
$\bar K_0 \le \frac{1}{10}$ small enough that 
{$C_2(\bar K_0) e^{\d_2} < 1$, and 
write $\bar C_i = C_i(\bar K_0), i = 1,2$. Furthermore, we let }
$\bar r_1 := \bar r_1(\bar K_0)$ be small enough that on 
$B(\bar r_1(\hat l^{(k)})^{-1})$, the cones 
$\Cc(\bar K_0)$ are invariant under $d\hat f_z$.

\subsection{Iteration of graph transforms}

We now consider iterated graph transforms along the orbit of $(x,\uo^+) \in \Gamma^+$,
introducing first the following notation: Given $g$ and a sequence of numbers 
$d_0, d_1, \dots$, we say 
$$
g_{k+1} = {\cal T}^{(k)} \circ \cdots \circ {\cal T}^{(0)}g\ , \qquad k=0, 1,2, \dots,
$$
are the graph transforms of $g$ on $ B(d_k(\hat l^{(k)})^{-1})$ if
$$\graph g_0= \graph g \cap  B(d_0(\hat l^{(0)})^{-1})\ ,
$$
and for each $k \ge 0$, we let
$$
\graph g_{k+1} = \hat f^{(k)}(\graph g_k) \cap  B(d_{k+1}(\hat l^{(k+1)})^{-1})\ ,
$$ 
assuming the graph transforms above are well defined. 
{In this definition, we allow the domain 
of definition of $g_k$ to be a proper subset of $ B^u(d_k(\hat l^{(k)})^{-1})$
containing $0$ (but when we write $h:U \to V$, it will be implied that $h$
is defined on all of $U$).}

Let $\bar K_0$ and $\bar r_1$ be as in Sect. 3.2.

\begin{prop}\label{lem:inclination} Given $K_0, \lambda_0, \d_0$,
there exist $\bar C \ge 1$ (independent of $K_0$), 
$m_0=m_0(K_0)$ and {$\bar r_0=\bar r_0(K_0, m_0)>0$} 
for which the following hold. Let $r_0 \leq \bar r_0$. 
Then there exists
$m_1 \in \Z^+$ depending on $\Lip(dg)$ in addition 
to the constants above with the following properties. Let 
$$
g: B^u(r_0 (\hat l^{(0)})^{-1}) \to \R^{cs}
$$ 
be a $C^{1+{\rm Lip}}$ map with 
(i) $g(0)=0$ and (ii) $\Lip(g) \le K_0$. We let $g_k$ be the graph transforms of $g$ 
{with $d_k=r_0$ for $k \le m_0$ and $d_k=\bar r_1$ for $k > m_0$.}
Then for all $k\ge m_0 + m_1$, 
$$
g_k :  B^u(\bar r_1 (\hat l^{(k)})^{-1}) \to \R^{cs}
$$
is defined and satisfies 
$$
\Lip(g_k) \le \bar K_0 \, , \quad  | (d g_k)_0| \leq e^{- k \lambda / 2} | (d g)_0| \, , \quad \mbox{and} \quad \Lip({dg_k}) \le \bar C \hat l^{(k)}\ .
$$
\end{prop}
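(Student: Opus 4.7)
The plan is to iterate Lemma \ref{lem:inclinationGT} in two phases. In \emph{Phase 1}, lasting $m_0$ steps, we apply the lemma with the (large) constant $K_0$ to drive $\Lip(g_k)$ below the threshold $\bar K_0$. In \emph{Phase 2}, starting at $k_0 = m_0$, we restart with the improved constant $\bar K_0$, which gives the contractive recursion with $\bar C_2 e^{\d_2} < 1$ that is needed to bound $\Lip(d g_k)$. The decay $|(dg_k)_0| \le e^{-k\lambda/2}|(dg)_0|$ is immediate from iterating Lemma \ref{lem:inclinationGT}(iv'), so the substantive content is the Lipschitz bound and the uniform control of $\Lip(dg_k)$.

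For Phase 1, Lemma \ref{lem:inclinationGT}(iii') contracts $\Lip(g_k)$ by $e^{-\lambda/2}$ per step, so taking $m_0 := \lceil 2\lambda^{-1}\log(K_0/\bar K_0)\rceil$ gives $\Lip(g_{m_0})\le \bar K_0$. The graph transform expands the domain size by at most a factor of $e^{\lambda/2}$ per step (as $\hat l^{(k+1)} \ge e^{-\d_2}\hat l^{(k)}$), so choosing $\bar r_0 = \bar r_0(K_0,m_0)$ small enough that $\bar r_0 e^{m_0(\lambda/2+\d_2)} \le \bar r_1$ and $\bar r_0 K_0 \le r_1(\lambda,\d_0,K_0)$ ensures that $\graph g_k\subset B(\bar r_1(\hat l^{(k)})^{-1})$ for all $k\le m_0$, so the hypotheses of Lemma \ref{lem:inclinationGT} are satisfied at each step. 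From $k = m_0$ onward, $\Lip(g_k)\le \bar K_0$ is preserved by Lemma \ref{lem:inclinationGT}(iii'), and $g_k$ is defined on the full ball $B^u(\bar r_1(\hat l^{(k)})^{-1})$ using $\rho=1$ (valid since $\bar K_0 \le 1$).

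In Phase 2, Lemma \ref{lem:inclinationGT}(iv') gives
\[
\Lip(dg_{k+1})\ \le\ \bar C_1 \hat l^{(k)}+\bar C_2 \Lip(dg_k).
\]
Unrolling from $k_0=m_0$ and using the temperance estimate $\hat l^{(k-j)}\le e^{j\d_2}\hat l^{(k)}$,
\[
\Lip(dg_k)\ \le\ \bar C_1 \hat l^{(k)}\sum_{j=0}^{\infty}(\bar C_2 e^{\d_2})^j+\bar C_2^{\,k-k_0}\Lip(dg_{k_0}).
\]
Since $\bar C_2 e^{\d_2}<1$ by the choice of $\bar K_0$, the first term is bounded by $C'\hat l^{(k)}$ for some constant $C'$ depending only on $\lambda,\d_0$. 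Because $\hat l^{(k)}\ge 1$, choosing $m_1$ large enough (depending on $\Lip(dg_{k_0})$, hence on $\Lip(dg)$) so that $\bar C_2^{m_1}\Lip(dg_{k_0})\le 1$ absorbs the Phase 1 residue into $\hat l^{(k)}$, yielding $\Lip(dg_k)\le \bar C\hat l^{(k)}$ with $\bar C:=C'+1$ for all $k\ge k_0+m_1$.

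The main obstacle is exactly the reason $m_1$ (unlike $m_0$) must depend on $\Lip(dg)$: during Phase 1, when the cone constant $K_0$ may be large, Lemma \ref{lem:inclinationGT}(iv') only yields the recursion $\Lip(dg_{k+1})\le C_1(K_0)\hat l^{(k)}+C_2(K_0)\Lip(dg_k)$ with possibly $C_2(K_0)>1$, so $\Lip(dg_{m_0})$ may inherit an inflation factor depending on $K_0$, $m_0$, and $\Lip(dg)$. Quantifying this inflation and then waiting out the Phase 2 geometric decay of the $\bar C_2^{\,k-k_0}$ factor is the only nontrivial step; the bookkeeping on the domain sizes at the Phase 1/Phase 2 transition (the switch from $d_k=r_0$ to $d_k=\bar r_1$ in the statement) is handled uniformly by the choice of $\bar r_0$.
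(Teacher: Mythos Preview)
Your approach is essentially the same as the paper's: a two-phase argument with $m_0$ steps to bring $\Lip(g_k)$ below $\bar K_0$, followed by the contractive recursion $\Lip(dg_{k+1})\le \bar C_1\hat l^{(k)}+\bar C_2\Lip(dg_k)$ summed via the temperance bound $\hat l^{(k-j)}\le e^{j\d_2}\hat l^{(k)}$.

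There is one imprecision worth flagging. You assert that ``from $k=m_0$ onward, $g_k$ is defined on the full ball $B^u(\bar r_1(\hat l^{(k)})^{-1})$ using $\rho=1$.'' This is not true at $k=m_0$: at that step $g_{m_0}$ is only defined on $B^u(r_0(\hat l^{(m_0)})^{-1})$, and since $r_0<\bar r_1$ the graph must \emph{grow} (by the factor $e^{\lambda/2}$ per step in Lemma~\ref{lem:inclinationGT}) for roughly $m_1'\approx 2\lambda^{-1}\log(\bar r_1/r_0)$ further steps before it fills $B^u(\bar r_1(\hat l^{(k)})^{-1})$. The paper handles this explicitly by introducing $m_1'=m_1'(r_0,\bar r_1)$ and then requiring $m_1\ge m_1'$. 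Your closing remark about ``bookkeeping on the domain sizes'' gestures at this but does not carry it out; your condition $\bar r_0 e^{m_0(\lambda/2+\d_2)}\le \bar r_1$ only keeps the graph \emph{inside} the $\bar r_1$-chart during Phase~1, it does not make the domain full at the end of Phase~1. The fix is simply to absorb $m_1'$ into your choice of $m_1$ (this is allowed since the statement permits $m_1$ to depend on ``the constants above,'' which include $r_0$).
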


\begin{proof} We assume $K_0 > \bar K_0$ (omit the first part of the proof
if $K_0 \le \bar K_0$). Let $m_0$ be such that $K_0 e^{-m_0 \lambda/2} < \bar K_0$.  
Fix $\bar r_0 > 0$ sufficiently small so that for each $0 \leq k \leq m_0 - 1$,  we have for $z \in B(\bar r_0 (\hat l^{(k)})^{-1})$ that $(d \hat f^{(k)})_z \, \Cc(K_0 e^{- k \lambda / 2}) \subset \Cc(K_0 e^{- (k + 1) \lambda / 2})$
(notation as in the proof of Lemma \ref{lem:inclinationGT}). By 
the estimates in the proof of Lemma \ref{lem:inclinationGT}, the choice of $\bar r_0$
depends on $m_0, K_0$.

With $r_0 \leq \bar r_0$ now fixed and $\{ g_k\}$ the graph transform sequence
as defined in the statement, we obtain from a simple induction 
{that $g_{m_0}$ is defined on $B^u(r_0 (\hat l^{(m_0)})^{-1})$ and
 $\Lip(g_{m_0}) \leq K_0 e^{- m_0\lambda / 2} < \bar K_0$.}
Since $\bar K_0$-cones are 
preserved on charts of size $ B(\bar r_1 (\hat l^{(k)})^{-1})$, 
 $\Lip(g_k) \le \bar K_0$ will hold for $k \ge m_0$. 
Moreover, one easily checks (see (iv') in Lemma \ref{lem:inclinationGT}) that
$| (d g_k)_0| \leq e^{- k \lambda / 2} | (d g)_0|$
holds for all $k$.

Next, we grow $g_k$ so that its graph stretches all the way
across the chart, letting $m_1' = m_1'(r_0, \bar r_1)$ be such that $g_{m_0 + m_1'}$
is defined on all of $B^u(\bar r_1 (\hat l^{(m_0 + m_1')})^{-1})$.

%

It remains to bound $\Lip(dg_k)$. Let $a=\Lip({d g_{m_0}})$. Though $\Lip({d g_k})$ 
may have grown during the first $m_0$ iterates, $a$ is determined by 
$\Lip(dg), K_0, \lambda$ and $m_0$ (Lemma \ref{lem:inclinationGT}). Applying Lemma \ref{lem:inclinationGT} again repeatedly
from step $m_0$ on, we obtain 
$$
\Lip({d g_{m_0+i}}) \le \bar C_1 \{(\hat l^{(m_0 + i)}) + \bar C_2 (\hat l^{(m_0 + i-1)})
+ \bar C_2^2 (\hat l^{(m_0 + i -2)}) + \cdots + \bar C_2^i  a\}\ .
$$
Let $\bar C = 2 \bar C_1 \sum_i ( \bar C_2e^{\d_2})^i$, and choose $m_1 \ge m_1'$ 
large enough that $\bar C_1 \cdot \bar C_2^{m_1} a< \frac12 \bar C$. The desired properties are
achieved for $k \ge m_0+m_1$.
\end{proof}

For the remainder of Section 3 we fix $K_0 > 0$, 
$r_0 < \bar r_0 (K_0, m_0(K_0))$ and assume Proposition \ref{lem:inclination}
 has been applied to a fixed $C^{1 + \Lip}$ graphing function 
 $g : B^u(r_0 (\hat l^{(0)})^{-1}) \to \R^{cs}$, $\Lip(g) \leq K_0$, obtaining
the graph transform sequence $\{ g_k\}$, with all notation (e.g., $m_0, m_1$) as in the conclusions of Proposition \ref{lem:inclination}.

First, we give a distortion estimate in this setting.

\begin{lem}\label{lem:distortion}
Write $a_0 = \Lip(d g)$. Then for any $k \geq m_0 + m_1$, 
there exists a constant $D = D(K_0, a_0, {r_0}; \hat l^{(0)}, \hat l^{(k)})$ 
with the following property.
Write $\gamma_{j} = \hat \Phi^{(j)} (\graph g_{j})$ for $0 \leq j \leq k$,
and let $p_1, p_2 \in \gamma_k$. Then,
\[
\bigg| \log \frac{\det( df^k_{\uo^+} | T \gamma_{0}) \big( (f^{k}_{\uo^+})^{-1}p_1 \big)  }
{\det( df^k_{\uo^+} | T \gamma_{0}) \big( (f^{k}_{\uo^+})^{-1}p_2 \big) } \bigg| \leq D \, . 
\]
\end{lem}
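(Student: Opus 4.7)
The plan is to use the standard telescope-and-contract method for distortion estimates. First, by multiplicativity of the Jacobian along the orbit,
\[
\log \frac{\det(df^k_{\uo^+}|_{T\gamma_0})((f^k_{\uo^+})^{-1}p_1)}{\det(df^k_{\uo^+}|_{T\gamma_0})((f^k_{\uo^+})^{-1}p_2)} = \sum_{i=0}^{k-1}\bigl[\log J_i(z_1^{(i)}) - \log J_i(z_2^{(i)})\bigr],
\]
where $J_i(z) := |\det(d\hat f^{(i)}|_{T_z \graph g_i})|$ and $z_s^{(i)} \in \graph g_i$ is the chart preimage of $p_s$ after $k-i$ iterates (so $\hat \Phi^{(k)}(z_s^{(k)}) = p_s$). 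The goal is to bound each summand by $C\,\hat l^{(i)}|z_1^{(i)} - z_2^{(i)}|$ and sum a convergent geometric series.

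For the Lipschitz estimate on $\log J_i$, I would parametrize $\graph g_i$ by $u \mapsto (u, g_i(u))$. Then $J_i$ is a smooth function of the pair $\bigl((d\hat f^{(i)})_{(u, g_i(u))}, (dg_i)_u\bigr)$, both of which are Lipschitz in $u$: Proposition \ref{prop:charts2Side}(b3) gives $\Lip(d\hat f^{(i)}) \leq \hat l^{(i)}$, and for $i \geq m_0 + m_1$ Proposition \ref{lem:inclination} gives $\Lip(dg_i) \leq \bar C \hat l^{(i)}$ (with $\Lip(g_i) \leq \bar K_0 \leq 1/10$, so parameter and ambient distances are comparable). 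Bounding intermediate matrix norms in terms of $\lambda_0, \d_0, \bar K_0$ and using that $J_i$ is bounded away from $0$ via $\bar K_0$-cone invariance, a direct computation yields $\Lip(\log J_i) \leq C_*\,\hat l^{(i)}$ on $\graph g_i$, with $C_*$ independent of $i$.

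The backward-contraction input is that each $\hat f^{(i)}$ expands tangents in the invariant cone $\Cc(\bar K_0)$ by at least $e^\lambda - \d$ (as in item 3 of Proposition \ref{prop:localUMfld}), so iterating backward yields $|z_1^{(i)} - z_2^{(i)}| \leq (e^\lambda - \d)^{-(k-i)} |z_1^{(k)} - z_2^{(k)}|$, and Proposition \ref{prop:charts2Side}(a) converts chart distances to manifold distances via $|z_1^{(k)} - z_2^{(k)}| \leq \hat l^{(k)} d(p_1, p_2)$. Combined with the multiplicative bound $\hat l^{(i)} \leq e^{(k-i)\d_2}\hat l^{(k)}$ coming from the cocycle estimate $e^{-\d_2} \leq (l\circ\tau)/l \leq e^{\d_2}$, each summand with $i \geq m_0+m_1$ is controlled by
\[
C_*\,(\hat l^{(k)})^2 e^{-(k-i)(\lambda/2 - \d_2)} d(p_1, p_2).
\]
Because $\d_2 \ll \lambda_0$, these terms sum to a bound of the form $D_1(\hat l^{(k)})\,d(p_1, p_2)$.

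The main obstacle, though ultimately a bookkeeping one, is handling the finitely many initial indices $i < m_0+m_1$: during these steps $\Lip(g_i)$ and $\Lip(dg_i)$ are controlled only by the weaker (non-uniform) bounds from Lemma \ref{lem:inclinationGT}, which depend on $K_0$ and $a_0 = \Lip(dg)$, while the chart scales $\hat l^{(i)}$ can be as large as $e^{(m_0+m_1)\d_2}\hat l^{(0)}$. Nonetheless $m_0+m_1$ depends only on $K_0, a_0, \lambda_0, \d_0$, so this initial block contributes at most a bounded constant $D_2(K_0, a_0, r_0; \hat l^{(0)})\,d(p_1, p_2)$; setting $D := D_1 + D_2$ gives the claimed dependence.
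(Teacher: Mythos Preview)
Your proof is correct and follows essentially the same approach as the paper's: split the telescoped sum into the initial block $0 \le i < m_0+m_1$ (controlled by finitely many terms depending on $K_0, a_0, r_0, \hat l^{(0)}$) and the tail $i \ge m_0+m_1$ (controlled by the standard geometric-series distortion argument using $\Lip(dg_i)\le \bar C\hat l^{(i)}$ and backward contraction, giving a bound depending on $\hat l^{(k)}$). The only cosmetic difference is that the paper bounds the initial block by a bare constant $D_1$ rather than $D_1\cdot d(p_1,p_2)$, and invokes the already-proved Lemma~\ref{lem:distortionHatSc} for the tail instead of rewriting the series; your more explicit version is fine.
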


{Note that $D$ does not depend on $k$ except through the value of $\hat l^{(k)}$.}

\begin{proof}
For $i = 1,2$, write $p_k^i = p_i$ and $p_0^i = (f^k_{\uo^+})^{-1} p_k^i$, and for $0 \leq j < k$ set $p_j^i = f^j_{\uo^+} p_0^i \in \gamma_j$. We decompose
\begin{gather*}
\bigg| \log \frac{\det( df^k_{\uo^+} | T \gamma_{0}) (p_0^1)}{\det( df^k_{\uo^+} | T \gamma_{0}) (p_0^2) } \bigg| 
\leq
\bigg| \log \frac{\det( df^{m_0 + m_1}_{\uo^+} | T \gamma_{0}) ( p_0^1 )}{\det( df^{m_0 +m_1}_{\uo^+} | T \gamma_{0}) ( p_0^2) } \bigg| 
+ 
\bigg| \log \frac{\det( df^{k - (m_0 + m_1)}_{\uo^+} | T \gamma_{m_0 + m_1}) (p_{m_0 + m_1}^1) }{\det( df^{k - (m_0 + m_1)}_{\uo^+} | T \gamma_{m_0 + m_1}) ( p_{m_0 + m_1}^1) } \bigg| 
\end{gather*}
The first RHS term is the sum of $m_0 + m_1$ terms, each of which is bounded from above in terms of $\| df_{\omega_j}\|, \| df_{\omega_j}^{-1}\|$, $1 \leq j \leq m_0 + m_1$; these in turn are controlled by the value $l_1(\hat \uo) \leq \hat l^{(0)}_{(x, \uo^+)}$ of the function $l_1$ as in Section 2.1, (recall $\hat \uo = \hat \uo(x, \uo^+)$ as in Section 3.1).
By these considerations, this term is bounded $\leq D_1$, where $D_1 = D_1(K_0, a_0,  r_0; \hat l^{(0)})$ (noting that $m_0, m_1$ depend on $K_0, a_0,  r_0$). 

For the second RHS term, observe that the graphing functions $g_j, j \geq m_0 + m_1$, satisfy $\Lip(g_j) \leq 1/10$ and $\Lip(d g_j) \leq \bar C \hat l^{(j)}$. A distortion estimate analogous to that in Lemma \ref{lem:distortionHatSc}
applies to bound this term $\leq D_2(\hat l^{(k)}) \cdot d(p_1, p_2)$. 

The proof is complete on setting $D = D_1 + D_2$.
\end{proof}

{The next lemma gives sufficient conditions for switching of axes (Lemma \ref{lem:chartSwitch11}) in the present context.} Let 
$\check g_k : \operatorname{Dom}(\check g_k) \subset \hat E^{u, (k)} \to E^{cs, (k)}$ 
be given by $\check g_k = \hat L^{(k)}_{(x, \uo^+)} \circ g_k \circ 
(\hat L^{(k)}_{(x, \uo^+)})^{-1}$. 

%
%

\begin{lem}\label{lem:chartSwitchWn}
For any $\hat l > 1$ there exists $r_3 = r_3(\hat l)$ with the following properties. Let $k \geq m_0 + m_1$ and let $r_3 = r_3(\hat l^{(k)})$. Then
\begin{itemize}
\item[(a)] $\operatorname{Dom}(\check g_k)$ contains $\hat E^{u, (k)}(r_3)$; and
\item[(b)] if $|(d g_k)_0| \leq (20 \check l^{(k)})^{-1}$ holds, then we have $\calLip(\check g_k) \leq 1/10$ on $\hat E^{u, (k)}(r_3)$. 
\end{itemize}
\end{lem}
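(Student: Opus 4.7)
\medskip
\noindent\textbf{Proof proposal.}
The plan is to pass estimates on $g_k$ from the adapted chart norm $|\cdot|$ on $\R^u \times \R^{cs}$ over to the Riemannian norm $\|\cdot\|$ on $T_{f^k_{\uo^+}x}M$ via the linear map $\hat L^{(k)}$. The key quantitative ingredient is the standard chart inequality (from Proposition~\ref{prop:charts2Side}(a), linearized at $0$): for any $w \in T_{f^k_{\uo^+}x}M$,
\[
\|w\| \;\le\; |(\hat L^{(k)})^{-1} w| \;\le\; \hat l^{(k)} \, \|w\| \, ,
\]
so that $\hat L^{(k)}$ distorts norms by at most a factor of $\hat l^{(k)}$. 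This is the only place the chart uniformity enters, and everything else is a straightforward mean-value-type computation using the estimates $\Lip(g_k) \le \bar K_0 \le 1/10$ and $\Lip(d g_k) \le \bar C \hat l^{(k)}$ supplied by Proposition~\ref{lem:inclination}.

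For part (a), I would observe that $\check g_k(v) = \hat L^{(k)} \circ g_k \circ (\hat L^{(k)})^{-1}(v)$ is defined precisely when $(\hat L^{(k)})^{-1}(v) \in B^u(\bar r_1 (\hat l^{(k)})^{-1})$, the domain of $g_k$. By the norm comparison above, $|(\hat L^{(k)})^{-1}v| \le \hat l^{(k)}\|v\|$, so it suffices to require $\|v\| \le \bar r_1 (\hat l^{(k)})^{-2}$. Thus setting $r_3 \le \bar r_1 (\hat l^{(k)})^{-2}$ secures part~(a).

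For part (b), I would bound $|dg_k|$ pointwise on a small adapted-norm neighborhood of $0$: if $u \in B^u(\rho)$ then
\[
|(dg_k)_u| \;\le\; |(dg_k)_0| + \Lip(d g_k)\,|u| \;\le\; (20\hat l^{(k)})^{-1} + \bar C \hat l^{(k)} \rho \, .
\]
Taking $v_1, v_2 \in \hat E^{u,(k)}(r_3)$ and writing $u_i = (\hat L^{(k)})^{-1} v_i$, the norm comparison gives $|u_i| \le \hat l^{(k)} r_3$ (so one takes $\rho = \hat l^{(k)} r_3$) and $|u_1 - u_2| \le \hat l^{(k)} \|v_1 - v_2\|$. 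Using $\|\hat L^{(k)} w\| \le |w|$ to pull the estimate back, the mean value theorem applied to $g_k$ yields
\[
\|\check g_k(v_1) - \check g_k(v_2)\| \;\le\; \Bigl(\tfrac{1}{20} + \bar C\,(\hat l^{(k)})^{3}\, r_3\Bigr) \|v_1 - v_2\| \, .
\]
Choosing $r_3 \le (20 \bar C (\hat l^{(k)})^{3})^{-1}$ makes the second term at most $1/20$, giving the desired bound $1/10$. Thus
\[
r_3(\hat l^{(k)}) := \min\!\bigl\{\bar r_1 (\hat l^{(k)})^{-2},\; (20\bar C (\hat l^{(k)})^{3})^{-1}\bigr\}
\]
works for both (a) and (b).

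There is no real obstacle here: the lemma is essentially a bookkeeping statement translating the adapted-norm estimates of Proposition~\ref{lem:inclination} into Riemannian-norm estimates, at the cost of powers of $\hat l^{(k)}$. The only mildly delicate point is keeping track of which power of $\hat l^{(k)}$ appears where, since the control on $\Lip(dg_k)$ itself grows linearly with $\hat l^{(k)}$ and we lose two further factors of $\hat l^{(k)}$ in the change of norms; this is what forces the cubic dependence $(\hat l^{(k)})^{-3}$ in the second branch of $r_3$.
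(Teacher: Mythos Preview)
Your proposal is correct and follows essentially the same approach as the paper: both arguments convert the adapted-norm bounds on $g_k$ and $dg_k$ from Proposition~\ref{lem:inclination} into Riemannian-norm bounds on $\check g_k$ via the chart comparison $\|w\| \le |(\hat L^{(k)})^{-1}w| \le \hat l^{(k)}\|w\|$, arriving at the same choice $r_3(\hat l) = \min\{\bar r_1 \hat l^{-2}, (20\bar C \hat l^{3})^{-1}\}$. The only cosmetic difference is that the paper bounds $\|(d\check g_k)_{\check u}\|$ directly (writing $\|(d\check g_k)_0\| \le \hat l^{(k)}|(dg_k)_0|$ and $\calLip(d\check g_k) \le (\hat l^{(k)})^2 \Lip(dg_k)$), whereas you bound $|(dg_k)_u|$ first and then apply the mean value theorem; the arithmetic is identical.
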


Since $|(d g_k)_0| \leq K_0 e^{- k \lambda / 2}$ (Proposition \ref{lem:inclination}) and {$\hat l^{(k)} \leq e^{k \d_2} \hat l^{(0)}$} (Proposition \ref{prop:charts2Side}), the condition in (b) is satisfied for all large enough $k$ depending on $\hat l^{(0)}$ and $K_0$.

\begin{proof}
Item (a) is guaranteed when $r_3(\hat l)$ is taken $\leq (\bar r_1 \hat l^2)^{-1}$. For (b), for $r > 0$ we estimate $\calLip(\check g_k|_{\hat E^{u, (k)}(r)})$ as follows. Let $\check u \in \hat E^{u, (k)}(r)$, $\check u = \hat L^{(k)}_{(x, \uo^+)} u$, $u \in \R^u$, and estimate
\begin{align*}
\| (d \check g_k)_{\check u} \| & \leq \| (d \check g_k)_0 \| + \calLip(d \check g_k) \| \check u \| 
 \leq \hat l^{(k)} | (d g_k)_0| + (\hat l^{(k)})^2 \Lip(d g_k) \| \check u \| \\
 & \leq \hat l^{(k)} | (d g_k)_0| + \bar C  (\hat l^{(k)})^3 \cdot r \, ,
\end{align*}
where $\bar C$ is as in the end of Section 3.2. Taking $r_3(\hat l) \leq (20 \bar C \hat l^3)^{-1}$ ensures the second RHS term is $\leq 1/20$, while the first term is $\leq 1/20$ when the condition in (b) is met.
\end{proof}

\medskip

\noindent {\it In the rest of the paper, $\bar K_0$ is fixed, as is $\d \in (0, \bar r_1(\bar K_0))$ 
sufficiently small for the purposes of Proposition \ref{prop:localUMfld} and Lemmas \ref{lem:graphTransform}, \ref{lem:distortionHatSc}.}


\section{Setup for the rest of the proof}

For $\uo \in \Omega^\Z$, we will realize $\mu_\uo$ as the weak limit as $n \to\infty$
of $\mu^n_\uo = (f^n_{\theta^{-n}\uo})_* \mu$, obtained by pushing $\mu$ forward
from time $-n$ to $0$. But we will not push forward all of $\mu$, only a small bit of it,
as that is all that is needed to show $\mu^*$ has the SRB property; see Sect. 2.5.
As a matter of fact, we will push forward a very localized bit of $\mu$ ({located
on a} ``source set''), and consider only the part that arrives in a localized region 
(the ``target set''), both suitably chosen. {This section describes 
and justifies the main ingredients of this setup; details including order of choice of constants are given in Sect. 5.1.}

\subsection{Uniformity sets of $\mu^n_\uo$-typical points}\label{subsec:112unifSets}

Let $l_0 > 1$ be fixed implicitly throughout. We fix a compact subset 
$\Theta_0 \subset \Gamma_{l_0}$, and for now fix $n \in \Z^+$.
We define $\Theta_n := \Theta_0 \cap  \tau^{-n} \Theta_0$, so that $\Theta_n$
consists of points $(x, \uo)$ that are good in the sense of being in a uniformity set
both at time $0$ and at time $n$. As in Sect. 3.1, a measurable selection
(Lemma \ref{lem:measPastChoice}) 
$\hat \uo^n : \Theta_n^+ \to \Omega^{\Z^-}$ enables us to systematically
assign ``pasts" to points in $ \Theta_n^+$, a positive $(\mu \times \P^+)$-measure set.

Now since we are interested in $\mu^n_\uo = (f^n_{\theta^{-n}\uo})_* \mu$,
we want to consider orbits starting from time
$-n$ and not from $0$. For $\uo \in \Omega^\Z$, we write $x_{-n}= f^{-n}_{\uo} x$, and 
define
\[
M^n_\uo = \{ x \in M : (x_{-n}, (\theta^{-n} \uo)^+) \in \Theta_n^+ \} \, .
\]
Then $M^n_\uo$ is a subset of $\mu^n_\uo$-typical points.

The ideas from Sect. 3.1 carry over in a straightforward way, though the notation gets
 more cumbersome. 
Let $x \in M^n_\uo$. For $k=0,1,\dots, n$, let $x_{-k} = f^{-k}_{\uo} x$. 
As before, 
$$
E^{cs}_{(x_{-k}, (\theta^{-k} \uo)^+)} = E^{cs}_{(x_{-k}, \theta^{-k} \uo)}
$$ 
are well defined, as $E^{cs}$-subspaces depend only on the future. 
To define $E^u$, for brevity let us write 
$\hat \uo=(\hat \uo^n(x_{-n}, (\theta^{-n}\uo)^+),(\theta^{-n}\uo)^+)$.
We define 
$$
\hat E^{u, (0)}_{(x_{-n}, (\theta^{-n} \uo)^+)} = \hat E^u_{(x_{-n}, (\theta^{-n} \uo)^+)} = E^u_{(x_{-n}, \hat \uo)}\ ,
$$
and for $k=0, 1, \dots, n-1$, we define the $E^u$-subspace at 
$(x_{-k}, (\theta^{-k} \uo)^+)$ as
$$
\hat E^{u, (n-k)}_{(x_{-n}, (\theta^{-n} \uo)^+)} = 
E^u_{\tau^{n-k}(x_{-n}, \hat \uo)}\ .
$$
Chart maps $\hat \Phi^{(n-k)}_{(x_{-n}, (\theta^{-n} \uo)^+)}$, connecting maps
$\hat f^{(n-k)}_{(x_{-n}, (\theta^{-n} \uo)^+)}$ and the $l$-function
$\hat l^{(n-k)}_{(x_{-n}, (\theta^{-n} \uo)^+)}$ are all defined as before.
Observe that by our choice of $\Theta_n$, we have that 
$$
\hat l^{(0)}_{(x_{-n}, (\theta^{-n} \uo)^+)}\ , \hat l^{(n)}_{(x_{-n}, (\theta^{-n} \uo)^+)} \le l_0\ .
$$

We finish by recording the following observation. Let $M_\uo  =\{ x \in M : (x, \uo) \in \Theta_0\}$; that is, $M_\uo$ is a uniformity set for the two-sided dynamics restricted to the fiber $M \times \{ \uo \}$. Since $\mu^n_\uo \to \mu_\uo$ weakly, one should expect that as $n \to \infty$, the uniformity set $M^n_\uo$ of $\mu^n_\uo$-typical points should converge to $M_\uo$ in some sense. Below this is made precise.
\begin{lem}\label{lem:convergeUnifSet}
For any $\d > 0$, there exists $N_0 = N_0(\d) \in \N$ such that for any $n \geq N_0$, we have that $M_{\uo}^n \subset \Nc_\d(M_{\uo})$. In particular,
\[
\lim_{n \to \infty} \sup_{x \in M^n_\uo} \dist(x, M_\uo) = 0 \, .
\]
\end{lem}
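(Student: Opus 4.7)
The plan is a compactness argument in the Polish space $M \times \Omega^{\Z}$, exploiting that $\Theta_0$ is compact and that $\Omega^{\Z}$ carries the product topology.

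First, I would unpack the definition of $M^n_\uo$. By definition $x \in M^n_\uo$ means there exists $\uo'^{-} \in \Omega^{\Z^-}$ such that, setting $\uo' := (\uo'^{-}, (\theta^{-n}\uo)^+)$, both $(x_{-n}, \uo') \in \Theta_0$ and $\tau^n(x_{-n}, \uo') \in \Theta_0$. The positive part of $\uo'$ is chosen precisely so that $f^n_{\uo'}$ inverts $f^{-n}_\uo$, so $\tau^n(x_{-n}, \uo') = (x, \theta^n \uo')$. A direct index computation then shows that $\tilde\uo := \theta^n \uo'$ agrees with $\uo$ on every coordinate $i \geq -(n-1)$: the only entries that can differ are those in the far past $i \leq -n$. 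Thus to each $x \in M^n_\uo$ we can associate a witness $\tilde\uo \in \Omega^{\Z}$ with $(x, \tilde\uo) \in \Theta_0$ and with $\tilde\uo$ matching $\uo$ on a window of coordinates around $0$ whose length grows with $n$.

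Next I would argue by contradiction. Suppose the first claim fails. Then there exist $\d > 0$, a subsequence $n_k \to \infty$, and points $x_k \in M^{n_k}_\uo$ with $d(x_k, M_\uo) \geq \d$. Pick corresponding witnesses $\tilde\uo^{(k)}$ as above. Since $\Theta_0$ is compact, pass to a further subsequence along which $(x_k, \tilde\uo^{(k)}) \to (x_*, \tilde\uo_*) \in \Theta_0$. On the other hand, $\tilde\uo^{(k)}$ coincides with $\uo$ on coordinates $\geq -(n_k - 1)$ and $n_k \to \infty$, so for each fixed index $i$ eventually $\tilde\uo^{(k)}_i = \omega_i$; hence $\tilde\uo^{(k)} \to \uo$ in the product topology, forcing $\tilde\uo_* = \uo$. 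But then $(x_*, \uo) \in \Theta_0$, i.e., $x_* \in M_\uo$, contradicting the uniform lower bound $d(x_k, M_\uo) \geq \d$. The ``in particular'' statement is equivalent to what was just shown.

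The only mildly delicate step is the index bookkeeping that identifies $\tau^n(x_{-n}, \uo')$ with $(x, \theta^n \uo')$ and verifies the window-of-agreement between $\tilde\uo^{(k)}$ and $\uo$; this is exactly the content of the specific way $\uo'^+$ is constructed from $(\theta^{-n}\uo)^+$. Once that observation is in hand, the remainder is a routine Polish-space/compactness argument.
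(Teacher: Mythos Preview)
Your proof is correct and follows essentially the same strategy as the paper: both arguments produce, for each $x \in M^n_\uo$, a point $(x,\tilde\uo) \in \Theta_0$ with $\tilde\uo$ agreeing with $\uo$ on coordinates $\geq -(n-1)$, then use compactness of $\Theta_0$ together with $\tilde\uo^{(k)} \to \uo$ in the product topology to conclude. The only cosmetic difference is that the paper invokes the specific measurable selection $\hat\uo^n$ to name the witness, whereas you correctly observe that any preimage under the projection $\Theta_n \to \Theta_n^+$ will do.
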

\begin{proof}
By standard compactness arguments, it suffices to prove that for any sequence $\{ x^n \} \subset M$ converging to a point $x \in M$ for which $x^n \in M^n_\uo$ for all $n$, we have that $x \in M_\uo$. For each $n \geq 1$ write $x_{-n}^n = f^{-n}_\uo x^n$, and
let $\check \uo_n \in \Omega^\Z$ be defined by $\check \uo_n = \theta^n (\hat \uo^n(x^n_{-n}, (\theta^{-n} \uo)^+), (\theta^{-n} \uo)^+)$. Observe that $\check \uo_n \to \uo$ as $n \to \infty$, and that $(x^n, \check \uo_n) \in \tau^{n} \Theta_n \subset \Theta_0$ for all $n \geq 0$ by our measurable selection construction. Since
 $(x^n, \check \uo_n)$ converges to $(x, \uo)$, and $\Theta_0$ is compact, we
obtain that $(x,\uo) \in \Theta_0$, i.e., $x \in M_\uo$.
\end{proof}

\newcommand{\ur}{{\underline r}}
\newcommand{\ck}{\mathfrak{c}}

\subsection{Accumulating $\mu^n_\uo$-mass}

Let $\beta_0>0$ be a very small number. We fix $l_0 > 1$ sufficiently large so that $\mu^* \Gamma_{l_0} \geq 1- \beta_0/3$. Fix a compact set $\Theta_0 \subset \Gamma_{l_0}$ 
with $\mu^* \Theta_0 \geq 1 - \beta_0/2$ and for each $n \geq 0$ let $\Theta_n = \Theta_0 \cap \tau^{-n} \Theta_0$, so that $\mu^*(\Theta_n) \geq 1 - \beta_0$.
For each $n$, fix a measurable selection $\hat \uo^n : \Theta_n^+ \to \Omega^{\Z^-}$ as in Lemma \ref{lem:measPastChoice}.
Finally, $M^n_\uo$ is as defined in Sect. 4.1.

Below, we determine a set of $\uo$ for which $M^n_\uo$ has sufficiently large $\mu^n_\uo$-mass for an infinite sequence of $n$.

\begin{lem}\label{lem:largeEnoughUnifSet}
Let $\ck  > 1$. For each $n \geq 1$ define
\[
\Gc^{(n)} = \{ \uo \in \Omega^\Z : \mu^n_\uo( M^n_\uo ) \geq 1 - \ck \cdot \beta_0\} \, ,
\]
and set $\Gc = \limsup_{n \to \infty} \Gc^{(n)} = \cap_{N \geq 1} \cup_{n \geq N} \Gc^{(n)}$. Then, we have
\[
\P(\Gc) \geq \frac{\ck - 1}{\ck} \, .
\]
\end{lem}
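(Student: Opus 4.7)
The plan is to first give each $\Gc^{(n)}$ probability at least $(\ck-1)/\ck$, and then apply the reverse Fatou lemma for sets to upgrade this to a bound on $\P(\Gc)=\P(\limsup_n \Gc^{(n)})$.

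The first step is to unwind the quantity $\mu^n_\uo(M^n_\uo)$. By definition of the pushforward,
\[
\mu^n_\uo(M^n_\uo) = \mu\big((f^n_{\theta^{-n}\uo})^{-1} M^n_\uo\big).
\]
A direct bookkeeping check with the RDS composition conventions shows that $f^{-n}_\uo \circ f^n_{\theta^{-n}\uo} = \mathrm{Id}$, so that when $x=f^n_{\theta^{-n}\uo}(y)$ we have $x_{-n}=y$. Therefore
\[
(f^n_{\theta^{-n}\uo})^{-1} M^n_\uo \;=\; \{\,y \in M : (y,(\theta^{-n}\uo)^+) \in \Theta_n^+\,\},
\]
i.e.\ $\mu^n_\uo(M^n_\uo)$ is the $\mu$-mass of the slice of $\Theta_n^+$ over $(\theta^{-n}\uo)^+$. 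Averaging in $\uo$ and using $\theta$-invariance of $\P$, this gives
\[
\int \mu^n_\uo(M^n_\uo)\,d\P(\uo) \;=\; (\mu \times \P^+)(\Theta_n^+).
\]
By Lemma \ref{lem:invariantMeasures1}(b), $\mu^*$ projects to $\mu \times \P^+$ on $M \times \Omega^{\Z^+}$, and since $\Theta_n$ is contained in the cylinder above $\Theta_n^+$, we get $(\mu \times \P^+)(\Theta_n^+) \geq \mu^*(\Theta_n) \geq 1-\beta_0$. Equivalently,
\[
\int \big(1-\mu^n_\uo(M^n_\uo)\big)\,d\P(\uo) \;\leq\; \beta_0.
\]

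Markov's inequality applied to the nonnegative function $\uo\mapsto 1-\mu^n_\uo(M^n_\uo)$ yields $\P((\Gc^{(n)})^c)\leq 1/\ck$, so $\P(\Gc^{(n)}) \geq (\ck-1)/\ck$ for every $n$. The reverse Fatou lemma for sets then gives
\[
\P(\Gc) \;=\; \P(\textstyle\limsup_n \Gc^{(n)}) \;\geq\; \limsup_n \P(\Gc^{(n)}) \;\geq\; \frac{\ck-1}{\ck},
\]
as desired. The only step with any subtlety is the first one — correctly identifying $(f^n_{\theta^{-n}\uo})^{-1}M^n_\uo$ as a slice of $\Theta_n^+$, so that the Fubini/projection argument lines up with $\mu^*(\Theta_n)$. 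Everything else is a one-line Markov bound plus the standard $\limsup$ inequality for sets.
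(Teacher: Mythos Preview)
Your proof is correct and follows essentially the same route as the paper: both identify $\mu^n_\uo(M^n_\uo)$ with the $\mu$-mass of the $(\theta^{-n}\uo)^+$-slice of $\Theta_n^+$, bound its integral by $(\mu\times\P^+)(\Theta_n^+)\geq 1-\beta_0$, and then use what is in substance Markov's inequality (the paper writes it out as a split integral) to get $\P(\Gc^{(n)})\geq(\ck-1)/\ck$, finishing with the $\limsup$ inequality for sets. Your phrasing via Markov and reverse Fatou is a bit cleaner, but the argument is the same.
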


\begin{proof}

We claim that
\begin{align}\label{eq:GcPullBackIdentity}
\theta^{-n} \Gc^{(n)} = \big\{ \uo \in \Omega^\Z : \mu\{ y \in M : (y, \uo^+) \in \Theta^+_n \} \geq 1 - \mathfrak c \b_0 \big\}
\end{align}
Assuming this for the moment, observe that $\theta^{-n} \Gc^{(n)}$ depends only on the $\Omega^{\Z^+}$-coordinate of $\uo$, hence
\[
\P^+((\theta^{-n} \Gc^{(n)})^+) =  \P( \theta^{-n} \Gc^{(n)}) = \P( \Gc^{(n)}) 
\]
by the $\P$-invariance of the shift $\theta$.
We now estimate:
\begin{align*}
1 -  \beta_0 & \leq  (\mu \times \P^+)( \Theta^+_n) \\
& = \bigg( \int_{(\theta^{-n} \Gc^{(n)})^+} +  \int_{\Omega^{\Z^+} \setminus (\theta^{-n} \Gc^{(n)})^+} \bigg) \mu \{ x \in M : (x, \uo^+) \in \Theta^+_n\} \, d \P^+( \uo^+) \\
& \leq \P^+( (\theta^{-n} \Gc^{(n)})^+) + (1 - \ck \beta_0) (1 - \P^+((\theta^{-n} \Gc^{(n)})^+))
\end{align*}
Rearranging, we obtain $\frac{\ck - 1}{\ck} \leq \P^+((\theta^{-n} \Gc^{(n)})^+) = \P(\Gc^{(n)})$, hence $\P( \Gc) \geq \frac{\ck - 1}{\ck}$. 

It remains to check \eqref{eq:GcPullBackIdentity}. Observe that $\theta^n \uo \in \Gc^{(n)}$ 
holds iff $\mu^n_{\theta^n \uo} (M^n_{\theta^{n} \uo}) \geq 1- \mathfrak c \b_0$. We evaluate
\begin{align*}
M^n_{\theta^n \uo} & = \{ x \in M : ( f^{-n}_{\theta^n \uo} x, \uo^+) \in \Theta^+_n \}
 = \{ x \in M : ((f^n_{\uo})^{-1} x, \uo^+) \in \Theta^+_n \} \\
 & = f^n_\uo \{ y \in M : (y, \uo^+) \in \Theta^+_n \} \, .
\end{align*}
Since $\mu_{\theta^n \uo} = (f^n_\uo)_* \mu$, equation \eqref{eq:GcPullBackIdentity} follows immediately.
\end{proof}



\bigskip
Our next step is to coordinate for each $\uo \in \Gc$ for a positive amount of
$\mu$-mass to come from a small, fixed region (the ``source set'') and to land in a small, 
fixed region (the ``target set'') under $f^{n_i}_{\theta^{-n_i}\uo}$ for some infinite sequence $\{n_i\}$. 

\medskip

We write $\psi := \frac{d \mu}{d \Leb}$, which we recall is continuous by hypothesis. With $\b_0$ as before, let us define $\a_0 = \b_0 / \Leb(M)$, so that
\[
\mu\{ \psi \geq \a_0\} \geq 1 - \b_0 \, .
\]

\begin{lem}\label{lem:accumulateMass}
For any $\e > 0$, there exists a constant $c = c(\e) > 0$ such that for any $\uo \in \Gc$, we have the following. There are points $\hat p_- \in \{ \psi \geq \a_0\}, \hat p \in M$, and a sequence $n_i \to \infty$ for which
\begin{gather}
\mu^{n}_{\uo} \big( M^{n}_\uo \cap \Bc(\hat p, \e) \cap f^{n}_{\theta^{-{n}} \uo} \Bc(\hat p_{-}, \e)  \big) \geq c \qquad \mbox{for all } n = n_i \ .
\end{gather}
\end{lem}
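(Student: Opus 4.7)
The plan is to combine the mass lower bound $\mu^n_\uo(M^n_\uo) \geq 1 - \ck \b_0$ (valid for $n$ with $\uo \in \Gc^{(n)}$) with the density bound $\mu\{\psi \geq \a_0\} \geq 1 - \b_0$, and then apply a double pigeonhole over a finite $\e$-cover of $M$. Concretely, the key identity is that
\[
\mu^n_\uo(M^n_\uo) = \mu(A_n), \qquad A_n := \{y \in M : (y,(\theta^{-n}\uo)^+) \in \Theta_n^+\},
\]
since $M^n_\uo = f^n_{\theta^{-n}\uo}\, A_n$ and $\mu^n_\uo = (f^n_{\theta^{-n}\uo})_*\mu$. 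More generally, for any Borel sets $U, V \subset M$,
\[
\mu^n_\uo\bigl(M^n_\uo \cap V \cap f^n_{\theta^{-n}\uo} U\bigr)
= \mu\bigl(A_n \cap U \cap (f^n_{\theta^{-n}\uo})^{-1} V\bigr).
\]

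First I would assume (as we may, choosing $\b_0$ small enough relative to $\ck$) that $\beta_1 := 1-(\ck+1)\b_0 > 0$. For $n$ with $\uo \in \Gc^{(n)}$, combining the two bounds gives $\mu(A_n \cap \{\psi \geq \a_0\}) \geq \beta_1$. Next, fix a finite cover of $M$ by $K = K(\e)$ open balls $\{\Bc(p_j,\e/2)\}_{j=1}^K$ of radius $\e/2$, and note that
\[
\sum_{i,j=1}^{K} \mu\bigl(A_n \cap \{\psi \geq \a_0\} \cap \Bc(p_i,\e/2) \cap (f^n_{\theta^{-n}\uo})^{-1}\Bc(p_j,\e/2)\bigr) \geq \beta_1.
\]
For each such $n$, pigeonhole yields indices $i(n), j(n) \in \{1,\dots,K\}$ for which the $(i(n),j(n))$-summand is at least $\beta_1/K^2$.

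Second, because $\uo \in \Gc$ there is an infinite set of $n$ with $\uo \in \Gc^{(n)}$, and the pair $(i(n),j(n))$ lives in the finite set $\{1,\dots,K\}^2$; a further pigeonhole produces a single pair $(i_*,j_*)$ and an infinite sequence $n_i \to \infty$ along which $(i(n_i),j(n_i))=(i_*,j_*)$. The intersection $\Bc(p_{i_*},\e/2) \cap \{\psi \geq \a_0\}$ is then nonempty; pick any point in it and call it $\hat p_-$, and set $\hat p := p_{j_*}$. Then $\Bc(\hat p_-,\e) \supset \Bc(p_{i_*},\e/2)$ and $\Bc(\hat p,\e) \supset \Bc(p_{j_*},\e/2)$, so applying the identity at the top with $U = \Bc(p_{i_*},\e/2)$ and $V = \Bc(p_{j_*},\e/2)$ gives
\[
\mu^{n_i}_\uo\bigl(M^{n_i}_\uo \cap \Bc(\hat p,\e) \cap f^{n_i}_{\theta^{-n_i}\uo}\Bc(\hat p_-,\e)\bigr) \geq \beta_1/K^2 =: c(\e).
\]

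This is essentially a pure pigeonhole argument, and the main (very mild) subtlety is the two-level pigeonhole: one must first localize in the $K^2$-grid at each good $n$, and then use the infiniteness of $\{n : \uo \in \Gc^{(n)}\}$ to extract a single pair $(i_*,j_*)$ that works for an infinite subsequence; the constant $c(\e)$ then depends only on $\b_0$, $\ck$, and the covering number $K(\e)$, not on $\uo$.
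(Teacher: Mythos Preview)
Your proof is correct and follows essentially the same pigeonhole-over-finite-covers strategy as the paper. The only cosmetic differences are that the paper performs the two pigeonhole steps sequentially (first fixing $\hat p_-$ by covering $\{\psi \geq \a_0\}$ with balls centered in that set, then fixing $\hat p$ by covering $M$), whereas you do a single $K^2$-pigeonhole and then use the $\e/2$-to-$\e$ inclusion to arrange $\hat p_- \in \{\psi \geq \a_0\}$; both yield a constant $c(\e)$ independent of $\uo$.
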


Note that in Lemma \ref{lem:accumulateMass}, the points 
$\hat p, \hat p_- \in M$ and the subsequence $n_i$ all depend on $\uo$,
whereas the constant $c = c(\e)$ is independent of $\uo$.
\begin{proof}
Let $\uo \in \Gc$. To start, fix a subsequence $n_i \to \infty$ along which $\uo \in \Gc^{(n)}$ for all $n = n_i$.
In pursuit of the `source set' $\Bc(\hat p_-, \e)$ and `target set' $\Bc(\hat p, \e)$, we refine $(n_i)$ successively
several times in the following argument.

Fix an open cover of $\{ \psi \geq \a_0\}$ by balls of radius $\e$ with centers $p_j \in  \{ \psi \geq \a_0\}, 1 \leq j \leq J$. For each $n = n_i$, 
we estimate:
\[
c_- := \frac{1}{J} (1- (\ck+1) \b_0) \leq \frac{1}{J} \mu ( f^{-n}_\uo M^{n}_\uo \cap  \{ \psi \geq \a_0\}) 
\leq \frac1J \sum_{j = 1}^J \mu (\Bc(p_j, \e) \cap f^{-n}_\uo  M^{n}_\uo) \, .
\]
For each $i$, there exists $j = j(i)$ so that $\mu(\Bc(p_j, \e) \cap f^{-n_i}_\uo M^{n_i}_\uo) \geq c_-$. 
Since there are only finitely many $j$, by the Pidgeonhole principle we may refine $(n_i)$ so that
\[
\mu^{n_i}_{\uo}(\Bc(\hat p_-, \e)) \cap M^{n_i}_\uo) \geq c_-
\] 
holds for $\hat p_- = p_{\hat j_-}$ for some fixed $\hat j_- \in \{1, \cdots, J\}$.

Continuing, fix an open cover of $M$ by balls of radius $\e$ with centers $p_j' \in M, 1 \leq j \leq J'$. For each $n = n_i$, we estimate:
\begin{align*}
c :=  \frac{c_-}{J'} & \leq \frac{1}{J'} \mu^{n}_{\uo}( M^{n}_\uo \cap f^{n}_{\theta^{-n} \uo} \Bc(\hat p_-, \e))   \leq \frac1{J'} \sum_{j = 1}^{J'} \mu^{n}_{\uo} \big(\Bc(p_j', \e) \cap M^{n}_\uo \cap f^{n}_{\theta^{-n} \uo} \Bc(\hat p_-, \e) \big) 
\end{align*}
By the same Pidgeonhole Principle argument, on refining $(n_i)$ once more we have that 
\[
\mu_{\uo}^{n_i} \big( \Bc(\hat p, \e) \cap M^{n_i}_\uo \cap f^{n_i}_{\theta^{-n_i} \uo} \Bc(\hat p_-, \e) \big) \geq c
\] 
where $\hat p = p_{\hat j}'$ for some fixed $\hat j \in \{ 1, \cdots, J'\}$. 
\end{proof}

\subsection{Disintegration of $\mu$ in the ``source set" onto $u$-graphs}

Fix $\uo \in \Gc$. We assume in this subsection that $\e > 0$ is specified, 
and Lemma \ref{lem:accumulateMass} has been applied to obtain $\hat p_-, \hat p \in M$, a 
sequence $\{n_i\}$ and $c = c(\e) > 0$. For $n=n_i$, define
\begin{align}\label{eq:defineLambda}
\Lambda^n = M_{\uo}^n \cap \overline{\Bc(\hat p, \e)} \cap f^{n}_{\theta^{-n} \uo} \overline{ \Bc(\hat p_-, \e)}
\qquad \mbox{and} \qquad \Lambda^n_- = f^{-n}_{\uo}(\Lambda^n)\ .
\end{align}
We now specify how $\mu$ restricted to $\Lambda^n_-$ will be decomposed
into measures on graphs to be pushed forward.

For $x \in \Lambda^n_-$, we have $(x, (\theta^{-n}\uo)^+) \in \Theta_n^+$.
This means in particular that $(x, (\theta^{-n}\uo)^+)$ possesses
a natural $E^{cs}_{(x, (\theta^{-n}\uo)^+)}$ and a systematic and measurable assignment of 
$\hat E^{u,(0)}_{(x, (\theta^{-n}\uo)^+)}$.
We will distintegrate $\mu$ onto the leaves of 
a smooth foliation $\mathcal F^n_-$, chosen in such a way that the leaves of 
$\mathcal F^n_-$, suitably restricted,
 are graphs from open subsets of $\hat E^{u,(0)}_{(x, (\theta^{-n}\uo)^+)}$ to 
$E^{cs}_{(x, (\theta^{-n}\uo)^+)}$ for each $x \in \Lambda^n_-$; we will refer to
such graphs as ``$u$-graphs".

\smallskip
To define $\mathcal F^n_-$, we fix a reference point $q^n_- \in \Lambda^n_-$.
As the discussion is entirely local, we confuse a neighborhood of $q^n_-$ in $M$
with a subset of $T_{q^n_-}M$  via $\exp_{q^n_-}$ and
define $\mathcal F^n_-$ to be the collection of 
$(\dim E^u)$-dimensional hyperplanes in $T_{q^n_- }M$ parallel to 
$\hat E^{u,(0)}_{(q^n_-, (\theta^{-n}\uo)^+)}$.

\begin{lem}\label{lem:lamination}
For all sufficiently small $\e > 0$ depending on $l_0, \psi = \frac{d \mu}{d \Leb}$ and 
$\a_0$, there exist constants $K_- = K_-(l_0), r_- = r_-(l_0, \e)$ for which the following hold for all $\uo \in \Gc$.
Assume Lemma \ref{lem:accumulateMass} has been applied. Let $n=n_i$, and let
 $\Fc^n_-$ be as above. Then for every $x \in \Lambda^n_-$, 
\begin{itemize}
\item[(a)] there is a function $h_x^- = h^-_{(x, (\theta^{-n} \uo)^+)}$\,,
$$
h_x^-:  B^u(r_-(\hat l^{(0)}_{(x, (\theta^{-n}\uo)^+)})^{-1}) \to \R^{cs} \qquad \mbox{with} \qquad
\Lip(h_x^-) \ ,  \Lip(dh_x^-) < K_- \, ,
$$
such that if $\mathcal F^n_-(x)$ is the leaf of $\mathcal F^n_-$ containing $x$, then
$$
\hat \Phi^{(0)}_{(x, (\theta^{-n}\uo)^+)}(\graph h_x^-) \subset \mathcal F^n_-(x)\ ;
$$
\item[(b)] $\hat \Phi^{(0)}_{(x, (\theta^{-n}\uo)^+)}  B(r_-(\hat l^{(0)}_{(x, (\theta^{-n}\uo)^+)})^{-1})
 \subset \{ \psi \geq \alpha_0 / 2\}$.
\end{itemize}
\end{lem}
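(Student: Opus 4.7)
The strategy is as follows: since the leaves of $\Fc^n_-$ are, by construction, affine hyperplanes in $T_{q^n_-}M$ parallel to $\hat E^u_{(q^n_-, (\theta^{-n}\uo)^+)}$ and then exponentiated back to $M$ via $\exp_{q^n_-}$, I would represent each leaf $\Fc^n_-(x)$ passing through a nearby $x \in \Lambda^n_-$ as a graph in the chart $\hat \Phi^{(0)}_{(x, (\theta^{-n}\uo)^+)}$ by switching axes from the reference point $q^n_-$ to $x$.

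For part (a), the key preliminary is uniform transversality: I would apply Remark \ref{rmk:ctyEu}(a) to conclude that $\hat E^u_{(q^n_-, (\theta^{-n}\uo)^+)}$ is uniformly transverse to $E^{cs}_{(x, (\theta^{-n}\uo)^+)}$ for $x \in \Lambda^n_-$ sufficiently close to $q^n_-$, with transversality quantified entirely in terms of $l_0$. Indeed, $E^{cs}$ depends only on the future and varies continuously on $\{l \le l_0\} \cap \{\uo^+ = (\theta^{-n}\uo)^+\}$ by Proposition \ref{prop:ctyEucs}(a), while $\|\hat\pi^u_{(q^n_-, (\theta^{-n}\uo)^+)}\| \leq l_0$. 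Then I would apply Lemma \ref{lem:chartSwitch11} (switching axes) to the leaf viewed as a smooth submanifold of $T_xM$ via $\exp_x^{-1}$: the uniform transversality gives the Lipschitz bound $\Lip(h^-_x) \leq K_-(l_0)$, while the bound on $\Lip(dh^-_x)$ exploits the fact that the leaf is perfectly flat in $T_{q^n_-}M$, so its curvature in the chart at $x$ arises only from the smooth change of coordinates $\exp_x^{-1} \circ \exp_{q^n_-}$ composed with the linear chart map $\hat L^{(0)}_{(x, (\theta^{-n}\uo)^+)}$, whose contributions on the chart scale are bounded in terms of $l_0$ and the ambient geometry of $M$.

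For part (b), I would use the containment $\Lambda^n_- \subset \overline{\Bc(\hat p_-, \e)}$ — which follows immediately from $\Lambda^n_- = f^{-n}_\uo \Lambda^n \subset f^{-n}_\uo \circ f^n_{\theta^{-n}\uo} \overline{\Bc(\hat p_-, \e)} = \overline{\Bc(\hat p_-, \e)}$ — together with $\hat p_- \in \{\psi \geq \a_0\}$ and continuity of $\psi$. Choosing $\e$ small enough (depending on $\psi$ and $\a_0$) that $\overline{\Bc(\hat p_-, 2\e)} \subset \{\psi \geq \a_0/2\}$, and then choosing $r_- = r_-(l_0, \e)$ small enough that the chart image $\hat \Phi^{(0)}_{(x, (\theta^{-n}\uo)^+)} B(r_-(\hat l^{(0)})^{-1})$ has Riemannian diameter at most $\e$ (using Proposition \ref{prop:charts2Side}(a) and $\hat l^{(0)} \leq l_0$), the chart image stays within distance $2\e$ of $\hat p_-$, hence in $\{\psi \geq \a_0/2\}$.

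The main obstacle is ensuring that $K_-$ depends only on $l_0$ and not on $n$ or on the particular selection $\hat \uo^n$, which may produce wildly different pasts for different $n$. This is exactly why Remark \ref{rmk:ctyEu}(a) — a transversality statement depending only on $l_0$ — is the essential ingredient: although $\hat E^u$ does not vary continuously with the selection, the fibre-wise continuity of $E^{cs}$ and the uniform bound $\|\hat \pi^u\| \leq l_0$ provide the needed transversality without ever having to compare two $\hat E^u$ subspaces directly.
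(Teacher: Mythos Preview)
Your proposal is correct and follows essentially the same route as the paper: both hinge on Remark~\ref{rmk:ctyEu}(a) combined with the continuity of $E^{cs}$ (Proposition~\ref{prop:ctyEucs}(a)) to obtain uniform transversality between $\hat E^u_{(q^n_-,\cdot)}$ and $E^{cs}_{(x,\cdot)}$, which yields $K_-(l_0)$; and both take $\e$ small via continuity of $\psi$, then $r_-$ small to confine the chart image to $\Bc(\hat p_-, 2\e) \subset \{\psi \geq \alpha_0/2\}$.

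Two small points. First, your appeal to Lemma~\ref{lem:chartSwitch11} is not quite right as stated: that lemma requires $d_H(E_x, E_y)$ small, which is exactly what fails here since $\hat E^u$ need not vary continuously with the measurable selection --- but you then correctly override this by invoking the transversality argument directly, which is what the paper does (without naming any lemma). Second, the paper makes explicit one thing you leave implicit: the reason a single $\e$ works for all $n$ is that the modulus of continuity of $x \mapsto E^{cs}_{(x, (\theta^{-n}\uo)^+)}$ is uniform in $n$, because all such $(x, (\theta^{-n}\uo)^+)$ lie in the fixed compact set $\Theta^+_n \subset \Theta^+_0$.
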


\begin{proof} First we require $\e$ to be small enough that $\Nc_{2 \e} \{ \psi \ge \a_0 \} \subset \{ \psi \ge \a_0 / 2\}$, where $\Nc_{ \e}$ denotes the $\e$-neighborhood of a set (recall that the density $\psi$ of $\mu$ was assumed continuous-- see Section 2). It follows that $\Bc(\hat p_-, 2 \e) \subset
\{\psi \ge \alpha_0/2\}$.

To check (a) and (b) for $n=n_i$, observe that by Lemma \ref{prop:ctyEucs}, 
$x \mapsto E^{cs}_{(x, (\theta^{-n}\uo)^+)}$ varies continuously
for $(x, (\theta^{-n}\uo)^+) \in \Gamma^+_{l_0}$, and by Remark \ref{rmk:ctyEu}(a) we have
$\|\hat \pi^{u, (0)}_{(x, (\theta^{-n}\uo)^+)}\| \le l_0$. This means that by choosing 
$\e$ sufficiently small to align nearby $E^{cs}$-subspaces, we are assured that there is uniform separation between 
$\hat E^{u,(0)}_{(q^n_-, (\theta^{-n}\uo)^+)}$,
i.e., $\mathcal F^n_-(x)$, 
and $E^{cs}_{(x, (\theta^{-n}\uo)^+)}$ for all $x \in \Lambda^n_-$.
This separation provides a constant $K_-$ in (a). That is, if $h_x^-$ is chosen
to satisfy
$\hat \Phi^{(0)}_{(x, (\theta^{-n}\uo)^+)}(\graph h_x^-) \subset \mathcal F^n_-(x)$,
then $\Lip(h_x^-), \Lip(d h_x^-) < K_-$.
We shrink $r_-$ as needed to ensure that $\hat \Phi^{(0)}_{(x, (\theta^{-n} \uo)^+)} \graph h_x^- \subset \Bc(\hat p_-, 2 \e)$, hence 
(b) holds.

Now the constants above need to work for all $n$, and not be chosen
one $n=n_i$ at a time. This requires that
the modulus of continuity of $x \mapsto E^{cs}_{(x, (\theta^{-n}\uo)^+)}$ 
be independent of $(\theta^{-n} \uo)^+$, which is true because
$(x, (\theta^{-n} \uo)^+)$ is contained in the compact set $ \Theta^+_n \subset \Theta^+_0$.
\end{proof}

\medskip
Returning to the measure $\mu$, and continuing to confuse a neighborhood of 
$M$ with a subset of $T_{q^n_-}M$,
we have that $(\frac{\alpha_0}{2}{\rm Leb}|_{B_{2\e}(\hat p_-)})\le \mu$, and
the conditional densities of $\frac{\alpha_0}{2}{\rm Leb}|_{B_{2\e}(\hat p_-)}$  
on the leaves of $\mathcal F^n_-$ are constant functions.


\begin{figure}[ht]
  \centering

\includegraphics[width=\textwidth]{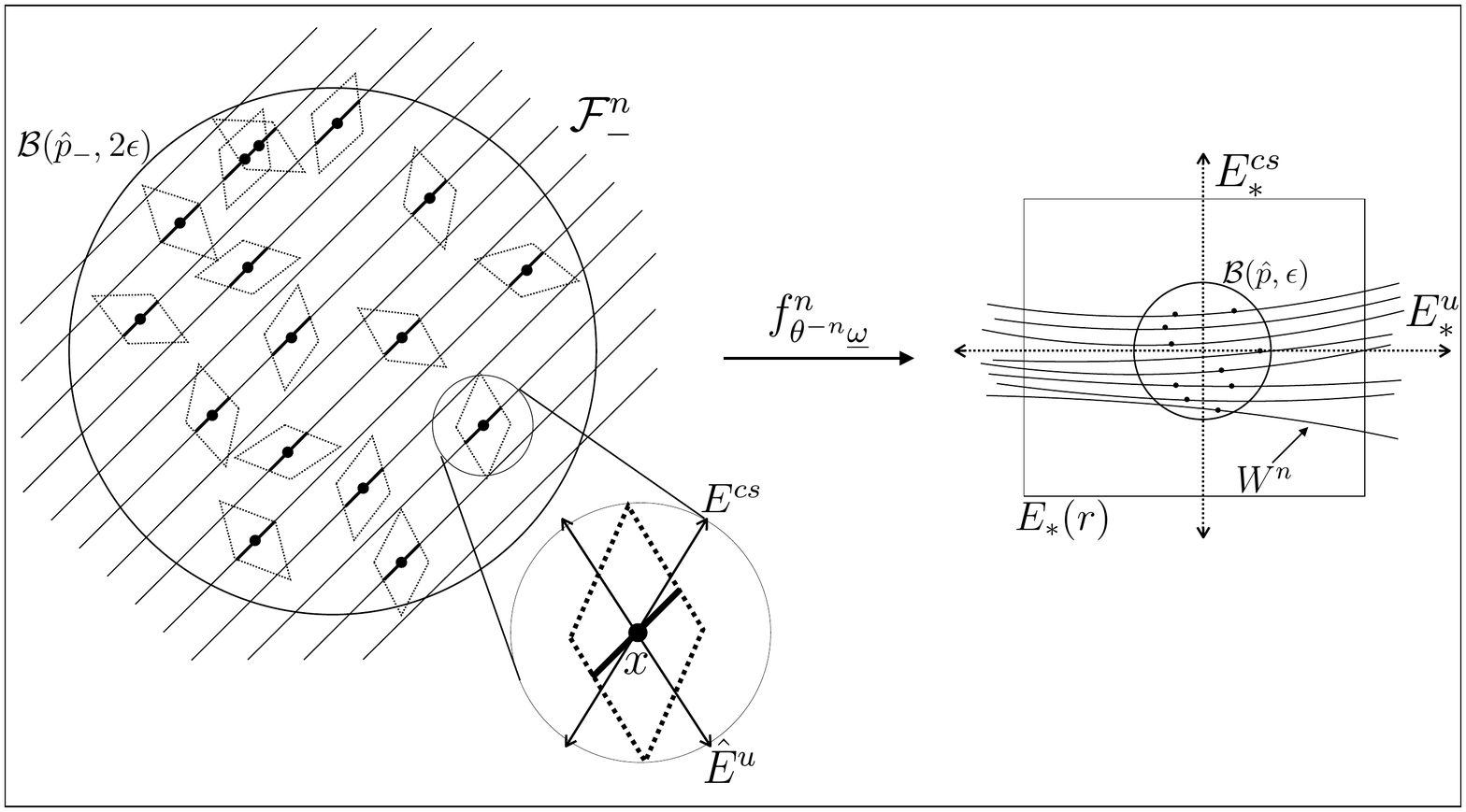}

  \caption{ 
  On the left-hand side is the foliation $\Fc^n_-$ through the ball $\Bc(\hat p_-, 2 \e)$ (the ``source set''). 
Surrounding each $x \in \Lambda_-^n$ is the `chart box' $\hat \Phi^{(0)}_{(x, (\theta^{-n} \uo)^+)} B(r_- (\hat l^{(0)})^{-1})$ (parallelogram); the bolded portion of the $\Fc^n_-$-leaf through each $x$ is a $u$-graph represented by the graphing function $h_x^-$ as in Lemma \ref{lem:lamination}. Note that the chart boxes at $x \in \Lambda^n_-$ are \emph{not} well-aligned with each other, and the functions $h_x^-$ may have large slopes. On the right side (the ``target set'') is a rectangle of the form
 $E^u_*(r)\times E^{cs}_*(r)$ centered at a reference point 
 $x_* \in \mathcal B(\hat p,\epsilon)$. 
For $n$ large enough, $f^n_{\theta^{-n} \uo}$ maps portions of the little pieces of 
$u$-graphs on the left across this box, intersecting it in 
sets that are graphs of functions from $E^u_*(r)$ to $E^{cs}_*(r)$ as shown.}
\end{figure}

\section{Geometry of pushed-forward $u$-graphs}

We have set up in Section 4 a situation that can be described as follows:
For each $\uo$ in a positive $\P$-measure set, there is a sequence $n_i$ 
such that for each $n=n_i$, there is a set $\Lambda^n_- \subset M$, and 
a collection of $u$-graphs associated with $x \in \Lambda^n_-$ that together 
carry positive $\mu$-measure. These $u$-graphs are to be transported 
forward by $f^n_{\theta^{-n}\uo}$. {We consider small 
pieces of these $u$-graphs that remain inside suitable Lyapunov charts 
for all $n$ steps, and refer to
the images at the end as $W^n$-leaves.}
In this section, we will focus on the {\it geometry} of the 
$W^n$ leaves and the manner to which they converge to (real) unstable manifolds of the RDS.
 We will begin by making precise the order of the various choices of constants 
and constructions.


\subsection{Stacks of $W^n$-leaves: details of construction}

We now bring together the following
three sets of ingredients we have prepared: the setup in Section 4 in which we accumulate certain sets 
of points with controlled finite pasts (Lemmas \ref{lem:accumulateMass} and 
\ref{lem:lamination}), graph transforms for `slanted' graphs developed 
in Sects. 3.2 -- 3.3 (Proposition \ref{lem:inclination}), and the switching of axes and consolidation
of images onto stacks (Lemma \ref{lem:chartSwitch11}).

\bigskip

\noindent {\it (A) Initial choices. } 
 To start, fix a small $\b_0 > 0$ and let $l_0 > 1$ be such that $\mu^*\{ l \leq l_0\} \geq 1 - \b_0 / 3$. 
With $\Theta_0, \Theta_n \subset \Gamma$ as in the beginning of Section 4.2, let $\Gc$
be as in Lemma \ref{lem:largeEnoughUnifSet} with $\mathfrak c = 2$ so that $\P(\Gc) \geq 1/2$.
In all that follows, $\uo \in \Gc$ is fixed. As previously,
we write $M_\uo = \{ x \in M : (x, \uo) \in \Theta_0\}$ and, for $n \geq 0$, $M_\uo^n = 
\{ x \in M : (x_{-n}, (\theta^{-n} \uo)^+) \in \Theta_n^+\}$.

\bigskip

\noindent {\it (B) Choices of $\e_*, r_*$, source and target sets, and a lower bound for $\{n_i\}$. }
Our aim by the end of part (B) is to have constructed the following objects:
\begin{itemize}
\item[(a)] `source' and `target' sets $\Bc(\hat p_-, \e_*), \Bc(\hat p, \e_*)$ (as in Lemma \ref{lem:accumulateMass});
\item[(b)] a reference point $x_* \in M_\uo \cap \Bc(\hat p, \e_*)$ and a reference box 
$E_*(r_*) := E^u_*(r_*) \times E^{cs}_*(r_*)$, $E^{u/cs}_* := E^{u/cs}_{(x, \uo)}$, 
suitable for constructing stacks of (i) $W^u$-leaves (as in Lemma \ref{lem:uStack2Side}) 
and (ii) appropriately truncated, pushed-forward $u$-graphs (called $W^n$-leaves) through 
the target set $\Bc(\hat p, \e_*)$ (see Figure 1).
\end{itemize}
The main work in constructing (a) and (b) is to identify the parameters $\e_*, r_*$, which we undertake now, starting with $r_*$.  

For (b)(i), Lemma \ref{lem:uStack2Side} requires that we take $r_*$ sufficiently small 
in terms of $l_0$, $\d$. For (b)(ii), to each $x \in \Lambda^n$
is associated a graph-transform-image (in the sense of Section 3) of a u-graph at $x_{-n} := f^{-n}_\uo x$ (to be made 
precise in (C) below).  The image, what we call a $W^n$-leaf, will be a graph defined on 
the $(\hat E^{u, (n)}_{(x_{-n}, (\theta^{-n} \uo)^+)}, E^{cs, (n)}_{(x_{-n}, (\theta^{-n} \uo)^+)})$-axes 
in $T_x M$. We seek to
switch axes to a common reference box $E_*(r_*) = E^{u}_*(r_*) \times E^{cs}_*(r_*)$
centered at a reference point $x_*$ (to be determined).
Taking $r_* \leq \frac12 r_3$ with $r_3 = r_3(l_0)$ as in Lemma \ref{lem:chartSwitchWn}
 ensures that truncations of $W^n$ leaves will have small-enough $\calLip$ constants
 for the purposes of Lemma \ref{lem:chartSwitch11}(iii),
provided that the $W^n$-leaves are sufficiently parallel to 
$\hat E^{u, (n)}_{(x_{-n}(\theta^{-n} \uo)^+)}$ (see end of (C)). 
This completely fixes the value of $r_*$.

We now identify two sets of conditions on $\e_*$, to be used in the construction of
 the `source' and `target' sets.
At the source set, Lemma \ref{lem:lamination} imposes two conditions on $\e_*$: 
One is that it has to be small enough so that $E^{cs}$ is sufficiently well-aligned through points of $f^{-n}_\uo M_\uo^n$
when restricted to a ball of radius $\e_*$; 
this is needed to guarantee the separation of 
$E^{cs}$ and $E^u$ in the sense of Remark \ref{rmk:ctyEu}(a). The other is that the entire $2\e_*$-ball
should be contained in $\{\psi \ge \alpha_0/2\}$ (Lemma \ref{lem:lamination}(b)).

At the target set we require $\e_*$ be suitable for constructing the stacks of both $W^u$ 
and $W^n$ leaves. Both require that $\e_*$ be small enough in terms of $l_0, \d$ and $r_*$ 
(Lemmas \ref{lem:chartSwitch11} and \ref{lem:uStack2Side}). Additionally, for the $W^n$ 
stack we need to make the $\hat E^{u, (n)}, E^{cs}$-axes at $x \in \Lambda^n$ line up with 
the $E^{u/cs}_*$ axes at the reference point $x_*$. The $E^{cs}$ axes are aligned by 
shrinking $\e_*$ (Proposition \ref{prop:ctyEucs}(a)); to align
the $\hat E^u$ with $E^u_*$ requires shrinking $\e_*$ and taking $\min \{ n_i\}$ sufficiently 
large (Proposition \ref{prop:ctyEucs}(b) and Remark \ref{rmk:ctyEu}(b)).

These are our requirements on $\e_*$ and $r_*$. With $\e_*$ determined, we are correctly
situated to apply Lemma \ref{lem:accumulateMass} with $\e = \e_*$, fixing once and for all
 the `source' and `target' regions $\Bc(\hat p_-, \e_*), \Bc(\hat p, \e_*)$ respectively, 
 and the {potentially viable
subsequence ${n_i}$} along which we have the bound $\mu^n_\uo(\Lambda^n) = \mu(\Lambda^n_-) \geq c_*  $ for $n = n_i$; here $c_* := c(\e_*)$ is as in Lemma \ref{lem:accumulateMass} and $\Lambda^n_-, \Lambda^n$ are the `source' and `target' sets as in \eqref{eq:defineLambda}. Finally, we fix 
an arbitrary point $x_* \in M_\uo \cap \Bc(\hat p, \e_*)$ to be used as reference point; 
that $M_\uo \cap \Bc(\hat p, \e_*) \neq \emptyset$ is guaranteed by 
Lemma \ref{lem:accumulateMass}. This completes  the construction of $E_*(r_*)$.

Further conditions will be imposed on the lower bound for $\{n_i\}$. 

\bigskip
\noindent {\it (C) Pushing forward $\Fc^n_-$-leaves. } For each $x \in \Lambda^n$,
we let $x_{-n} = f^{-n}_\uo x \in \Lambda^n_-$, and push forward 
 the graph of $h_{x_{-n}}^-$ (here $x_{-n}$ plays the role of $x$ in Lemma \ref{lem:lamination}) by applying Proposition \ref{lem:inclination}. 
Letting $K_0 = K_- = K_-(l_0)$ and $r_- = r_-(l_0, \e_*)$  be as in 
Lemma \ref{lem:lamination}, and $\bar r_0 = \bar r_0(K_-)$ be as in Proposition \ref{lem:inclination}, we set $r_0 = \min\{ \bar r_0, r_-\}$.
Then for all $n = n_i \geq m_0 + m_1$ where 
 $m_0, m_1$ are as in Proposition \ref{lem:inclination} (depending on $r_-, K_-$),
the graph transform $h_x := h^n_{(x, \uo)} = \mathcal T^{(n-1)} \circ \cdots \circ \mathcal T^{(0)} 
h^-_{x_{-n}}$ is defined on  all of $B^u( \bar r_1 (\hat l^{(n)}_{(x_{-n}, (\theta^{-n} \uo)^+)})^{-1})$ 
and satisfies
 \begin{gather}\label{eq:controlgnGraph}\begin{gathered}
h_x(0) = 0, \quad \Lip(h_x) \leq \bar K_0 \leq 1/10 \, , \\
| (d h_x)_0| \leq K_- e^{- n {\lambda /2}} \, , \quad \text{ and } \quad \Lip(d h_x) \leq \bar C \hat l^{(n)}_{(x_{-n}, (\theta^{-n} \uo)^+)} \, ,
\end{gathered}\end{gather}
where $\bar C$ is as in Proposition \ref{lem:inclination}. We define
\[
W^n_{(x, \uo)} := \hat \Phi^{(n)}_{(x_{-n}, (\theta^{-n} \uo)^+)} \graph h_x 
\]
and let $\check h_x : \operatorname{Dom}(\check h_x) \subset \hat E^{u, (n)}_{(x_{-n}, (\theta^{-n} \uo)^+)} \to E^{cs, (n)}_{(x_{-n}, (\theta^{-n} \uo)^+)}$ be its graphing map. To perform the switching of axes 
to $E^{u/cs}_*$ as discussed in Paragraph (B), we guarantee
$\| (d \check h_x)_0 \| \leq \frac{1}{20}$ (Lemma \ref{lem:chartSwitchWn})
by taking $\min \{ n_i \}$ sufficiently large
so that $| (d h_x)_0| \leq (20 l_0)^{-1}$.

\bigskip
\noindent {\it (D) Collecting onto stacks.} Let 
\[
\Lambda := \overline{\Bc(\hat p, \e_*)} \cap M_\uo  
\]
where $\hat p \in M$ is as in (B). 
Applying Lemma \ref{lem:uStack2Side}, we let $\Sc = \cup_{x \in \Lambda} \xi(x)$ 
be the stack of $W^u$-leaves and $\Xi$ the partition of $\Sc$ into $\xi(x)$, where
$\xi(x)$, the $W^u$ leaf through $x \in \Lambda$, has the form $\xi(x) = \exp_{x_*} \graph \gamma_x$ {for some}
 $\gamma_x : E_*^u(r_*) \to E_*^{cs}(r_*)$ with $\calLip(\gamma_x) \leq 1$.

For $n = n_i$ sufficiently large, we now define the corresponding stack $\Sc^n$:
\begin{lem}\label{lem:construct112stack}
There exists $N_* \in \N$, depending on all the parameters above,  
such that the following holds for all $n = n_i \geq N_*$.
\begin{itemize}
\item[(a)] 
For each $x \in \Lambda^n$, we have that the connected component of $W^n_{(x, \uo)} \cap \exp_{x_*} E_*(r_*)$ 
containing $x$ coincides with $\exp_{x_*} \graph \gamma^n_x$,
where $\gamma^n_x : E_*^u(r_*) \to E^{cs}_*(r_*)$ is a $C^{1 + \Lip}$-mapping with $\calLip(\gamma_x^n) \leq 1$.
\item[(b)] The partition $\Xi^n$ of $\Sc^n = \cup_{x \in \Lambda^n} \xi^n(x)$ into leaves $\xi^n(x) := \exp_{x_*} \graph \gamma^n_x$ is measurable. 
\end{itemize}
\end{lem}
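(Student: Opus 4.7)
The plan closely parallels that of Lemma \ref{lem:uStack2Side}(a), the main new ingredient being the alignment of the ``slanted'' $\hat E^{u,(n)}$-subspaces with the reference $E^u_*$ at $x_*$. For (a), fix $x \in \Lambda^n$ and work with the graphing function $h_x$ from paragraph (C), whose properties are summarized in \eqref{eq:controlgnGraph}. First, I would invoke Lemma \ref{lem:chartSwitchWn} to pass from the adapted-norm graph $h_x$ on $B^u(\bar r_1 (\hat l^{(n)})^{-1})$ to the Riemannian-norm graph $\check h_x : \hat E^{u,(n)}(r_3) \to E^{cs,(n)}$ with $\calLip(\check h_x) \le 1/10$, where $r_3 = r_3(l_0)$; this requires $|(dh_x)_0| \le (20 l_0)^{-1}$, which holds provided $n$ is so large that $K_- e^{-n\lambda/2} \le (20 l_0)^{-1}$. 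Then I would apply Lemma \ref{lem:chartSwitch11} with $L = l_0$, $\rho = r_*$ to switch axes from $(\hat E^{u,(n)}, E^{cs,(n)})$ at $x$ to $(E^u_*, E^{cs}_*)$ at $x_*$, defining $\gamma^n_x$ as the resulting graph.

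Verifying the hypotheses of Lemma \ref{lem:chartSwitch11} is the heart of the argument. Condition (i) is immediate from $\hat l^{(n)}, l(x_*, \uo) \le l_0$; condition (ii.1) holds since $d(x, x_*) \le 2\e_*$ with $\e_*$ chosen small in (B). For (ii.2) on the $F$-subspaces, $E^{cs,(n)}_{(x_{-n},(\theta^{-n}\uo)^+)} = E^{cs}_{(x,\uo)}$ because $E^{cs}$ depends only on $\uo^+$, so Proposition \ref{prop:ctyEucs}(a) plus the smallness of $\e_*$ aligns them. The alignment of $\hat E^{u,(n)}$ with $E^u_*$ is the essential new point: by construction, $\hat E^{u,(n)}_{(x_{-n},(\theta^{-n}\uo)^+)} = E^u_{(x,\theta^n \hat\uo)}$, where locally $\hat\uo := (\hat\uo^n(x_{-n},(\theta^{-n}\uo)^+), (\theta^{-n}\uo)^+)$. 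Since $\theta^n\hat\uo$ and $\uo$ share the $n$ consecutive past entries $\omega_0, \omega_{-1}, \ldots, \omega_{1-n}$, Proposition \ref{prop:ctyEucs}(b) yields the required Hausdorff bound provided $n \ge n_0(\e_2, l_0)$ and $d(x, x_*) \le 2\e_*$ is suitably small, both of which we arrange in the definitions of $\e_*$ and $N_*$. Hypothesis (iii) follows from the first step and the choice $r_* \le r_3/2$ in (B). Lemma \ref{lem:chartSwitch11} then produces $\gamma^n_x$ with $\calLip(\gamma^n_x) \le 1/5 < 1$; its $C^{1+\Lip}$ regularity is inherited from $h_x$ through the $C^2$ changes of coordinates. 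Since $\exp_{x_*}\graph\gamma^n_x$ is a connected subset of $W^n_{(x,\uo)}$ containing $x$ (as $h_x(0)=0$) and stretching across the full base $E^u_*(r_*)$, it coincides with the connected component of $W^n_{(x,\uo)} \cap \exp_{x_*} E_*(r_*)$ through $x$.

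For (b), the plan is to trace the measurability of $x \mapsto \gamma^n_x \in C(E^u_*(r_*), E^{cs}_*(r_*))$ through the construction: $x \mapsto x_{-n}$ is continuous; $x_{-n} \mapsto \hat\uo^n(x_{-n},(\theta^{-n}\uo)^+)$ is Borel by Lemma \ref{lem:measPastChoice}; $x_{-n} \mapsto h^-_{x_{-n}}$ is continuous by the construction of $\Fc^n_-$ in Lemma \ref{lem:lamination}; the graph transform $h^-_{x_{-n}} \mapsto h_x$ is Borel, being defined through compositions of the Borel maps $f_{\omega_j}$; and the axes switch from $(\hat E^{u,(n)}, E^{cs,(n)})$ to $(E^u_*, E^{cs}_*)$, given by an implicit function construction, is Borel in all its inputs. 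Thus $x \mapsto \gamma^n_x$ is Borel and the partition $\Xi^n$ of $\Sc^n$ into the leaves $\exp_{x_*}\graph\gamma^n_x$ is measurable.

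The principal obstacle in this argument is the control of the alignment of the slanted subspaces $\hat E^{u,(n)}$: these depend on a measurable choice of past, so they need not be a priori close to $E^u_*$ or to each other. Their uniform alignment for $n$ large rests crucially on Remark \ref{rmk:ctyEu}(b), that the dependence of $\hat E^{u,(n)}$ on the measurable selection weakens as $n$ grows, which is exactly what enables Proposition \ref{prop:ctyEucs}(b) to deliver Hausdorff alignment uniformly once $n \ge n_0(\e_2, l_0)$. This is what makes $N_*$ well-defined and finite.
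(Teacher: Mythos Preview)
Your argument for (a) is correct and is exactly the paper's approach: invoke Lemma \ref{lem:chartSwitchWn} to control $\calLip(\check h_x)$, then apply Lemma \ref{lem:chartSwitch11}, with the hypotheses already arranged in paragraphs (B) and (C). Your identification of the $\hat E^{u,(n)}$-alignment via Proposition \ref{prop:ctyEucs}(b) and Remark \ref{rmk:ctyEu}(b) is precisely the point.

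For (b) you take a different route from the paper. The paper does not trace Borel measurability through the construction; instead it observes that, despite the measurable selection entering into the chart system, the leaf $\xi^n(z)$ as a subset of $M$ is simply the connected component of $f^n_{\theta^{-n}\uo}(\Fc^n_-(z_{-n})) \cap \exp_{x_*} E_*(r_*)$ containing $z$, and therefore depends only on the smooth foliation $\Fc^n_-$ and the diffeomorphism $f^n_{\theta^{-n}\uo}$. Hence $z \mapsto \gamma^n_z$ is \emph{continuous} as $z_{-n}$ ranges over a small ball $\Bc(x_{-n}, \iota_n)$, and $\Sc^n$ decomposes into finitely many such continuous substacks, from which measurability is immediate.

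Your approach is in principle workable but has two soft spots. First, the claim that $x_{-n} \mapsto h^-_{x_{-n}}$ is continuous is not correct as written: $h^-_{x_{-n}}$ is expressed in the chart $\hat\Phi^{(0)}_{(x_{-n}, (\theta^{-n}\uo)^+)}$, which depends on the measurable selection $\hat\uo^n$ and is only Borel in $x_{-n}$. Replacing ``continuous'' by ``Borel'' repairs this without harming your chain. Second, and more substantively, the inference ``$x \mapsto \gamma^n_x$ Borel on $\Lambda^n$, hence $\Xi^n$ is a measurable partition of $\Sc^n$'' is not automatic: you have produced a Borel map on the index set $\Lambda^n$, but measurability of the partition requires a Borel map on $\Sc^n$ itself whose level sets are the leaves (equivalently, a Borel quotient $\Sc^n \to \Sc^n/\Xi^n$). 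One must still argue that distinct leaves are disjoint (which follows since they are pieces of distinct $\Fc^n_-$-leaves pushed forward) and that the holonomy to a transversal is Borel. The paper's local-continuity argument delivers exactly this, and is what you would end up needing to close your version as well.
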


\begin{proof}[Proof of Lemma \ref{lem:construct112stack}]
For (a) we apply Lemma \ref{lem:chartSwitch11} to switch the axes of $W^n_{(x, \uo)} = \exp_x \graph \check h_x$ to the common axes $E^u_*(r_*), E^{cs}_*(r_*)$, having already verified conditions (i)--(iii) in Lemma \ref{lem:chartSwitch11} 
in paragraphs (B) and (C).

For (b), measurability of $\Xi^n$ follows from the fact that $\Sc^n$ is the union of at-most finitely many sets of the form
\[
\mathscr S^n_x := \bigg( \bigcup_{z \in \Lambda^n \cap f^n_{\theta^{-n} \uo} \Bc(x_{-n}, \iota_n)} W^n_{(z, \uo)} \bigg)  \cap \exp_{x_*} \big( E_*(r_*) \big)  \, , \quad x \in \Lambda^n\, , 
\]
where $\iota_n=\iota_n(x)$ is chosen so that $z \mapsto \gamma_z^n$ varies continuously over
$z \in \Lambda^n \cap f^n_{\theta^{-n} \uo} \Bc(x_{-n}, \iota_n)$.
\end{proof}


\subsection{Limiting properties of $W^n$-leaves}\label{subsubsec:approxUleafReg}

Now that we have grouped nearby $W^u$ and truncated $W^n$ leaves into `stacks' $\Sc$ and $\Sc^n$,
we turn our attention to the limiting properties of $\Sc^n$ and its relation to $\Sc$. 
Recall that for $x \in \Lambda^n$ and $y \in \Lambda$, 
$\gamma^n_x, \gamma_y: E_*^u(r_*) \to E_*^{cs}( r_*)$ are the graphing functions 
of the leaves of $\Sc^n$ and $\Sc$ through $x$ and $y$ respectively.

\begin{prop}\label{lem:ctyInX}
For any $\e > 0$, there exist $\tilde n_0 = \tilde n_0(\e) \geq N_*$ and $\tilde \eta = \tilde \eta(\e) > 0$ with the following property. For any $n = n_i \geq \tilde n_0$ and any $x \in \Lambda^n, y \in \Lambda$ with $d(x,y) < \tilde \eta$, we have that 
$\| \gamma^n_x - \gamma_y\| < \e$ where $\| \cdot \|$ refers to the uniform norm on $C(E_*^u(r_*), E_*^{cs}( r_*))$.
\end{prop}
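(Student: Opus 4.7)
The plan is to realize both $\gamma^n_x$ and $\gamma_y$ as $N$-fold forward graph-transform iterates of graphs defined at time $-N$, and then close the gap via the contraction estimate of Lemma \ref{lem:graphTransform}(b). First I would choose $N = N(\e)$ large enough that the contraction factor $c<1$ appearing there satisfies $c^N < \e / C_*$, where $C_*$ is a constant depending on $l_0, r_*, \bar K_0$ that absorbs the conversion from the $|\cdot|'$ norm to the uniform norm on $C(E^u_*(r_*), E^{cs}_*(r_*))$ after axis-switching. Then for $n=n_i$ sufficiently large (at least $N + m_0 + m_1$), Proposition \ref{lem:inclination} supplies the well-controlled $(n-N)$-fold iterate $h^{n-N}_x$ of $h^-_{x_{-n}}$ at $(x_{-N}, (\theta^{-N}\uo)^+)$, with slope $\le \bar K_0$ and small $|(dh^{n-N}_x)_0|$. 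Its $N$ further $x$-orbit graph transforms, followed by a switch to the $E^u_*$-axes at $x_*$, reconstruct $\gamma^n_x$ by construction. Analogously, $g_{\tau^{-N}(y, \uo)}$ at $(y_{-N}, \theta^{-N}\uo) \in \Gamma$ is the starting graph whose $N$-fold $y$-orbit graph transform, followed by the axis switch, yields $\gamma_y$.

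Next I would use Lemma \ref{lem:chartSwitch11} to translate $h^{n-N}_x$ into an equivalent graph $\tilde h^{n-N}_x$ living in the chart at $(y_{-N}, \theta^{-N}\uo)$. Its hypotheses are satisfied by shrinking $\tilde \eta$: the bound $d(x_{-k}, y_{-k}) \le C(N) \tilde \eta$ for all $0 \le k \le N$ follows from the uniform estimate $\|df^{-1}_{\omega_{-k}}\| \le l_0 e^{k\d_2}$, while the axes at $x_{-N}$ and $y_{-N}$ are close by the continuity of $E^{cs}$ (Proposition \ref{prop:ctyEucs}(a), applied in the uniformity set $\{l \le l_0 e^{N\d_2}\}$ with common future) and of $\hat E^u$ (Proposition \ref{prop:ctyEucs}(b), requiring $n - N \ge n_0(\e, l_0 e^{N\d_2})$ so that the measurably selected past of $x_{-N}$ via $\hat\uo^n$ coincides with the true past of $y$ for its first $n-N$ steps, rendering the remaining infinite-past discrepancy irrelevant). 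The resulting quantity $|\tilde h^{n-N}_x - g_{\tau^{-N}(y, \uo)}|'$ is then bounded uniformly in $n$ by a constant depending only on $l_0$ and $\bar K_0$.

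Finally I would push both graphs forward $N$ steps via graph transforms along the $y$-orbit, applying Lemma \ref{lem:graphTransform}(b) to contract their $|\cdot|'$ difference by $c^N$, and then perform a final axis switch to $E^u_*$ at $x_*$, yielding $\|\tilde \gamma^n_x - \gamma_y\| < \e/2$, where $\tilde\gamma^n_x$ denotes the result of iterating $\tilde h^{n-N}_x$ along the $y$-orbit. The main obstacle, and the most delicate step, is that the genuine $\gamma^n_x$ arises from $x$-orbit graph transforms of $h^{n-N}_x$, not $y$-orbit ones. To bridge the two compositions I would estimate the discrepancy between the $x$- and $y$-orbit charts and connecting maps at each step $-k$ by a quantity controlled by $d(x_{-k}, y_{-k})$ plus the axis misalignment at that step, and sum over the $N$ steps. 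Since $N$ is fixed and all error contributions vanish with $\tilde\eta$, the residual $\|\gamma^n_x - \tilde\gamma^n_x\|$ can be forced below $\e/2$ by further shrinking $\tilde\eta$, and the triangle inequality closes the argument.
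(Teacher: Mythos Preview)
Your overall strategy---go back $N$ steps, use the graph-transform contraction, then control the residual by continuity---is close in spirit to the paper's, but there is a genuine gap in the step where you invoke Lemma~\ref{lem:graphTransform}(b). That lemma is stated for the norm $|h|' = \sup_{u \neq 0} |h(u)|/|u|$ and explicitly requires $g^i(0)=0$. Your translated graph $\tilde h^{n-N}_x$, however, passes through the image of $x_{-N}$ in the $y_{-N}$-chart, not through the origin; consequently $|\tilde h^{n-N}_x - g_{\tau^{-N}(y,\uo)}|'$ is infinite, not ``bounded uniformly in $n$,'' and the contraction estimate cannot be applied as you describe. One could try to substitute a $C^0$-contraction estimate, but that is not what the paper provides, and you would have to supply it separately.

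The paper avoids this difficulty by a different decomposition. Rather than comparing the two actual graphs at time $-k$, it inserts the \emph{zero function} $\mathbf{0}$ in each chart as an intermediary, yielding a three-term triangle inequality
\[
\|\gamma^n_x - \gamma_y\| \le \|\gamma^n_x - \tilde\gamma^{n,k}_x\| + \|\tilde\gamma^{n,k}_x - \tilde\gamma^k_y\| + \|\tilde\gamma^k_y - \gamma_y\|,
\]
where $\tilde\gamma^{n,k}_x, \tilde\gamma^k_y$ are the (axis-switched) $k$-fold graph-transform iterates of $\mathbf{0}$ along the $x$- and $y$-orbits respectively. The first and third terms are handled by Lemma~\ref{lem:graphTransform}(b) cleanly, since in each case both graphs (the actual one and $\mathbf{0}$) pass through the origin of the same chart. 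The middle term is handled not by contraction but by \emph{continuity} of a fixed $k$-fold composition of graph transforms: one represents the flat plane $E^u_{\tau^{-k}(x,\uo')}$ in the $y$-chart as a graph $\mathbf{0}^y_x$ close to $\mathbf{0}$, and pushes both forward along the $y$-orbit. Because the iterate of $\mathbf{0}^y_x$ represents the same physical manifold as the iterate of $\mathbf{0}$ along the $x$-orbit, this also absorbs what you isolate as your ``most delicate step'' (comparing $x$-orbit to $y$-orbit iterates) without a separate inductive error-summation.
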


It follows that $\limsup_{n \to \infty} \Sc^n \subset \Sc$. The statement of Proposition \ref{lem:ctyInX}
is all that we need; we do not prove, nor use, 
the continuity of $x \mapsto \gamma^n_x$. However, it follows from a version of the arguments below that `oscillations' in the Hausdorff distance between nearby $\xi^n$-leaves can be made uniformly, arbitrarily small for all sufficiently large $n$. Indeed, the following is a modification of a standard argument for proving the continuity of actual $W^u$-leaves (see, e.g., Section 5 in \cite{blumenthal2017entropy}).

\begin{proof}[Proof of Proposition \ref{lem:ctyInX}] In this proof, we will assume as before a canonical identification of the 
tangent spaces at $x$ and $y$, which are very close. Also, we will, for simplicity, 
use the notation of two-sided charts, assuming $(x,\uo')$ and $(y, \uo)$ 
are such that
 $\omega_i = \omega'_i$ for all $i > -n$ where $n$ is as in the Proposition.
No relation between $\omega_i $ and $ \omega'_i$ for $i\leq-n$ is assumed, as that will depend on the Selection Lemma.

\smallskip

{\it Plan of proof.} Let ${\bf 0} : \R^u \to \R^{cs}$ denote the zero function.
We consider ${\bf 0}$ as a function in the chart at $\tau^{-k}(y, \uo)$ for some
$k \ll \tilde n_0$ {(both $k$ and $\tilde n_0$ to be determined)}, 
and let ${\bf 0}^k_y : B^u(\d l_0^{-1}) \to \R^{cs}$
be given by ${\bf 0}^k_y := \Tc_{\tau^{-1} (y, \uo)} \circ \cdots \circ \Tc_{\tau^{-k}(y, \uo)} {\bf 0}$
(where $\Tc_{\cdots}$ is the graph transform as in Section 2.3). 
Likewise, we consider ${\bf 0}$ as a function in the chart at 
$\tau^{-k}(x, \uo')$, and let ${\bf 0}^{k, n}_x : B^u(\d l_0^{-1}) \to \R^{cs}$ be given by 
${\bf 0}^{k, n}_x := \Tc_{\tau^{-1} (x, \uo')} \circ \cdots \circ \Tc_{\tau^{-k}(x, \uo')} {\bf 0}$.
{Leaving it to the reader to check that} the switching of axes (Lemma \ref{lem:chartSwitch11}) can be performed, we obtain two functions
$\tilde \gamma^{k}_y, \tilde \gamma^{n,k}_x: E_*^u ( r_*) \to E_*^{cs}( r_*)$ whose graphs
are contained in $\exp_y(\graph {\bf 0}^k_y)$ and $\exp_x(\graph {\bf 0}^{k,n}_x)$ respectively. We will bound $\| \gamma^n_x - \gamma_y \|$ via the triangle inequality
\begin{equation} \label{triangle}
\| \gamma^n_x - \gamma_y \| \leq \| \gamma^n_x - \tilde \gamma^{ n,k}_x \| + \| \tilde \gamma^{n,k}_x - \tilde \gamma^{k}_y \| + \| \tilde \gamma^{k}_y - \gamma_y \|  \ .
\end{equation}
For given $ \e>0$, to prove $\| \gamma^n_x - \gamma_y \|
< \e$ for all $n \ge \tilde n_0$, we plan to first choose $k=k(\e, l_0)$ and then $\tilde n_0=\tilde n_0(k,\e, l_0)$. 

{We isolate below another `change-of-chart' type estimate that will be used several times 
in the proof of (\ref{triangle}). The proof is straightforward and left to the reader.

\begin{lem}\label{lem:chartSwitchRedux} 
Let $g_1, g_2 : B^u(\d l(y, \uo)^{-1}) \to \R^{cs}$ be Lipschitz graphing maps in the chart at
$(y, \uo)$. For $i=1,2$, let
$\check g_i := (L_{(y, \uo)} \circ g_i \circ L_{(y, \uo)}^{-1})|_{E^u_{(y, \uo)}(2 r_*)}$, and
assume that $\graph \check g_i \subset E^u_{(y, \uo)}(2 r_*) \times E^{cs}_{(y, \uo)}(\frac12 r_*)$
with $\calLip(\check g_i) \leq 1/10$. Let $\gamma_i : E^u_*(r_*) \to E^{cs}_*$ be such that 
$\exp_{x_*} \graph \gamma_i$ $ = \exp_{x_*} E_*(r_*) \cap \exp_{y} \graph \check g_i$. 
Then $\calLip(\gamma_i) \leq 1/5$, and 
\begin{align}\label{eq:estNorm}
\| \gamma_1 - \gamma_2 \| \leq \tilde C | g_1 - g_2| \, ,
\end{align}
where $\tilde C = \tilde C(l_0) > 0$. The same holds when $(y, \uo)$ is replaced by $(x, \uo')$.
\end{lem}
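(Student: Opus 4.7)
The plan is to split the argument into two stages: first, to reduce the estimate from the adapted sup-norm $|g_1 - g_2|_{\sup}$ on $\R^u \oplus \R^{cs}$ to the Riemannian sup-norm $\|\check g_1 - \check g_2\|_{\sup}$ on $T_y M$; and second, to bound $\|\gamma_1 - \gamma_2\|$ in terms of $\|\check g_1 - \check g_2\|_{\sup}$ via a direct geometric comparison between the ambient graphs.

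For the first stage, I will use that $\check g_i = L \circ g_i \circ L^{-1}$ with $L = L_{(y, \uo)}$, together with the operator-norm bounds implicit in Proposition \ref{prop:charts2Side}(a): namely, $\|L\| \leq 1$ from the adapted $|\cdot|$-norm to the Riemannian $\|\cdot\|$-norm, and $\|L^{-1}\| \leq l(y,\uo) \leq l_0$ in the opposite direction. Hence $L^{-1}\check v \in B^u(\d l(y, \uo)^{-1})$ for every $\check v \in E^u_{(y, \uo)}(2 r_*)$ provided $r_*$ is small enough (as arranged in Sect.\ 5.1(B)), and
\[
\|\check g_1(\check v) - \check g_2(\check v)\| = \|L\bigl(g_1(L^{-1}\check v) - g_2(L^{-1}\check v)\bigr)\| \leq |g_1 - g_2|_{\sup}.
\]
It thus remains to show $\|\gamma_1 - \gamma_2\| \leq \tilde C(l_0)\, \|\check g_1 - \check g_2\|_{\sup}$.

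For the second stage, I will argue via a Hausdorff-type comparison. Fix $p \in \exp_{x_*}\graph \gamma_1 \subset \exp_y \graph \check g_1$, and write $p = \exp_y(\check v + \check g_1(\check v))$ for the unique $\check v \in E^u_{(y, \uo)}(2 r_*)$. Then $p' := \exp_y(\check v + \check g_2(\check v)) \in \exp_y \graph \check g_2$ satisfies $d(p, p') \leq (1 + O(\e_*))\|\check g_1 - \check g_2\|_{\sup}$ by the $C^1$-proximity of $\exp_y$ to the identity on the small ball in question. A short slide along $\exp_y \graph \check g_2$ in the $E^u_{(y, \uo)}$-direction then produces a point $q \in \exp_{x_*} E_*(r_*) \cap \exp_y \graph \check g_2 = \exp_{x_*}\graph \gamma_2$; the slide length is bounded in terms of $\|\pi^u_*\| \leq l_0$, of $\calLip(\check g_2) \leq 1/10$, and of the near-alignment of $E^{u/cs}_{(y, \uo)}$ with $E^{u/cs}_*$ (Remark \ref{rmk:ctyEu}(a) combined with $\e_* \ll 1$), yielding $d(p, q) \leq C_1(l_0)\|\check g_1 - \check g_2\|_{\sup}$. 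Since both $\exp_{x_*}\graph \gamma_i$ are graphs over $E^u_*$ with $\calLip \leq 1/5$ (a direct consequence of Lemma \ref{lem:chartSwitch11}, which also yields the first conclusion of the lemma), the pointwise distance $\|\gamma_1(u) - \gamma_2(u)\|$ is at most $C_2(l_0) \cdot d(p, q)$, and the desired bound follows with $\tilde C(l_0) = C_1(l_0) C_2(l_0)$.

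The main technical obstacle will be the ``slide'' step: one must verify that the correction from $p'$ to $q$ is of order $d(p, p')$ with constant depending only on $l_0$. This is standard once one observes that, thanks to Remark \ref{rmk:ctyEu}(a) and the smallness of $\e_*$, the transversality of $E^u_{(y, \uo)}$ to $E^{cs}_*$ (and of $E^{cs}_{(y, \uo)}$ to $E^u_*$) is uniform on the relevant region, so the correction is governed purely by projection norms that are bounded by $l_0$.
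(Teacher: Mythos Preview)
The paper does not prove this lemma; it states that ``the proof is straightforward and left to the reader.'' Your argument correctly supplies those details, and the two-stage decomposition (first pass from $|\cdot|$ to $\|\cdot\|$ via the norm comparison in Proposition \ref{prop:charts2Side}(a), then compare graphs after the change of axes) is exactly the natural route. The bound $\calLip(\gamma_i)\le 1/5$ is indeed an immediate consequence of Lemma \ref{lem:chartSwitch11}, as you say.

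One small simplification: the ``slide'' step is not really needed. Since $\check g_2$ is defined on $E^u_{(y,\uo)}(2r_*)$, Lemma \ref{lem:chartSwitch11} furnishes an extension of $\gamma_2$ to a domain strictly larger than $E^u_*(r_*)$; hence the point $p'$ (which differs from $p$ by at most $\|\check g_1-\check g_2\|$ in the $x_*$-chart) already lies on this extended graph. Writing $p'$ in $x_*$-coordinates as $(u'',\gamma_2(u''))$ with $\|u-u''\|\lesssim \|\check g_1-\check g_2\|$, the triangle inequality and $\calLip(\gamma_2)\le 1/5$ give $\|\gamma_1(u)-\gamma_2(u)\|$ directly, with constants controlled by $\|\pi^{u/cs}_*\|\le l_0$. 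Your slide argument handles the same boundary issue correctly but is slightly more elaborate than necessary.
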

}

{\it First and third terms in \eqref{triangle}: } We use the contraction estimate in Lemma \ref{lem:graphTransform} to obtain
$$
| {\bf 0}^{{k}}_y- g_{(y, \uo)} | \leq c^{k}
$$ 
where $c$ is as in Lemma \ref{lem:graphTransform} and $g_{(y, \uo)}$ is the graphing map
of the unstable manifold {in the chart at $(y, \uo)$}. By Lemma \ref{lem:chartSwitchRedux}, 
$$
\| \tilde \gamma^{k}_y - \gamma_y \| \leq \tilde C | {\bf 0}^{{k}}_y- g_{(y, \uo)} | \leq \tilde C c^k \, .
$$
We require $k$ to be large enough that $\tilde C c^{k}< \e/3$.

The first term on the right side of (\ref{triangle}), $\| \gamma^n_x - \tilde \gamma^{ n,k}_x \|$,
 is treated similarly, provided that 
$n-k$ is large enough that in the chart at $\tau^{-k}(x,\uo')$, 
$\Lip(\Tc_{\tau^{-(k+1)} (x,\uo')} \circ \cdots \circ \Tc_{\tau^{-n}(x,\uo')} h^-_x) \le \frac{1}{10}$.
This requires that $\tilde n_0 \ge k+ m_0+ m_1$ where $m_0, m_1$ are as in Proposition \ref{lem:inclination} and depend on $r_-$ and $K_-$.

Let $k=k(\e)$ be fixed from here on.
\medskip

{
{\it Second term in \eqref{triangle}.} Given $0 < \bar \e \ll 1$ to be determined,
we claim that for $\tilde \eta$ small enough and $\tilde n_0$ large enough
depending on $l_0, k$ and $\bar \e$, the following hold for 
$x,y$ with $d(x,y) < \tilde \eta$:
\begin{itemize}
\item[(a)] $d(f^{-k}_\uo x, f^{-k}_\uo y), \ 
d_H(E^{u/cs}_{\tau^{-k}(x, \uo')}, E^{u/cs}_{\tau^{-k}(y, \uo)} ) < \bar \e$, and
\item[(b)] 
$f^{-i}_{\uo'} x \in \Phi_{\tau^{-i} (y, \uo)} B \big(\frac12 r_*  l ( \tau^{-i}(y, \uo))^{-1} \big) 
\quad \mbox{for all } 0 \leq i \leq k\ .$
\end{itemize}
Item (b) and $d(f^{-k}_\uo x, f^{-k}_\uo y) < \bar \e$ follow from the fact that
$\calLip(f^{-i}_\uo) \leq \| df^{-i}_{\uo}\| \leq l_0^k e^{\frac{k(k+1)}{2} \d_2}$ and
$l(\tau^{-i}(y, \uo)) \leq l_0 e^{k \d_2}$, and that both bounds depend on $l_0$ 
and $k$
alone. To control $d_H(E^{u/cs}_{\tau^{-k}(x, \uo')}, E^{u/cs}_{\tau^{-k}(y, \uo)} )$, 
we apply Proposition \ref{prop:ctyEucs} to $\tau^{-k} (y, \uo), \tau^{-k}(x, \uo')
\in \{ l \leq l_0 e^{k \d_2}\}$, and require that $\tilde n_0 \ge n_0+k$, where 
$n_0 = n_0(\bar \epsilon, l_0 e^{k \d_2})$ is as in Proposition \ref{prop:ctyEucs}.

Now let ${\bf 0}^y_x : B^u(\d l( \tau^{-k}(y, \uo))^{-1}) \to \R^{cs}$ be the function
whose graph is the component of 
$(\Phi_{\tau^{-k}(y, \uo)})^{-1} \exp_{f^{-k}_\uo x} E^u_{\tau^{-k}(x, \uo')}$ 
in $B(\d l( \tau^{-k}(y, \uo))^{-1})$
containing $(\Phi_{\tau^{-k}(y, \uo)})^{-1} f^{-k}_\uo x$. By choosing $\bar \e$
sufficiently small, we may assume, by item (a) above, that $\Lip({\bf 0}^y_x) \leq 1/10$, 
$ \Lip(d {\bf 0}^y_x) \leq 1$, and $|{\bf 0} - {\bf 0}^y_x|$ is as small as we wish.
This together with item (b) permits us to apply Lemma \ref{lem:graphTransform}(b)
to ensure that the graph transform 
$$
{\bf 0}^{y, k, n}_x := \Tc_{\tau^{-1} (y, \uo)} \circ \cdots \circ \Tc_{\tau^{-k} (y, \uo)} 
{\bf 0}^y_x : B^u(\d l(y, \uo)^{-1}) \to \R^{cs}
$$
is well defined. Moreover, with the modulus of continuity of $\Tc_{\tau^{-i} (y, \uo)}, 
1\le i \le k$, depending only on $l_0 e^{k\d_2}$, we may choose $\bar \e$ sufficiently
small to guarantee that $|{\bf 0}^{y, k, n}_x - {\bf 0}^k_y| \le \frac{\e}{3\tilde C}$
where $\e$ is as in \eqref{triangle} and $\tilde C$ is as in 
Lemma \ref{lem:chartSwitchRedux}.

The $(\exp_{x_*}^{-1}\circ \Phi_{(y, \uo)})$-images of the graphs of 
${\bf 0}^{y, k, n}_x$ and ${\bf 0}^k_y$, when restricted
to $E_*(r_*)$ are precisely the graphs of $\tilde \gamma^{n,k}_x$ and
$\tilde \gamma^{k}_y$ respectively. Another application of Lemma \ref{lem:chartSwitchRedux} gives
 $\| \tilde \gamma^{n,k}_x - \tilde \gamma^{k}_y \| \le \frac{\e}{3}$.}
\end{proof}

{\it For each $\uo \in \Gc$, the constructions of Section 5 are fixed for the remainder of the paper.}


\section{Proof of SRB property}

We now complete the proof of the Main Proposition (Proposition \ref{prop:main}).

\subsection{Construction of partitions respecting unstable manifolds}

Let $\Sc$ be as in Sect. 5.1, paragraph (D), i.e.,
$\Sc$ is a stack of local unstable manifolds through points in 
$\Lambda$, with $\Xi$ denoting the partition
into unstable leaves. To capture the conditional measures on $\Xi$ of any measure 
$\nu$ supported on $\Sc$, a standard procedure is to construct a sequence
of finite partitions $\a_1, \a_2, \cdots$ of $\Sc$ with the following properties:
\begin{itemize}
\item[(a)] The sequence $\{ \a_m \}$ is increasing, i.e. $\a_{m+1} \geq \a_m$ for all $m \geq 1$.
\item[(b)] For each $m$, we have $\a_m \leq \Xi$, i.e., $\a_m$ consists of intact $\xi$-leaves; and
\item[(c)] $\vee_{m= 1}^\infty \a_m \circeq \Xi$, where $\circeq$ denotes equivalence mod $0$ with respect to $\nu$.
\end{itemize}
Then properties of the conditional measures of $\nu$ on $\Xi$ can be deduced
from its conditional measures on $\alpha_m$ as $m \to \infty$.

\medskip
Complicating matters in our setting is that the measure of interest is the limit of
a sequence of measures that are not supported on $\Sc$ but on nearby
stacks $\Sc^n$ of $W^n$ leaves; see Section 5.
To accommodate these approximating measures, we will construct partitions 
similar to $\alpha_m$ but with slightly ``enlarged" elements, so they will contain intact
$W^n$ leaves. 
The aim of this subsection is to make precise the construction of such a sequence of partitions we will call $\beta_m$.

Continuing to use notation from Section 5, we define 
$\tilde \mu^n_\uo = \mu^n_\uo|_{\Lambda^n}$. On refining the sequence $\{ n_i \}$, let us assume that $\tilde \mu^n_\uo$ converges weakly as $n \to \infty$ to a measure $\tilde \mu_\uo$. Note that $\tilde \mu_\uo \leq \mu_\uo$, and that $\tilde \mu_\uo$ is supported on $\Lambda$ (by Lemma \ref{lem:convergeUnifSet}), with $\tilde \mu_\uo(\Lambda) \geq   c_* > 0$ (Lemma \ref{lem:accumulateMass}). For $S \subset \exp_{x_*}( E_*(r_*))$ 
let us write
\[
\diam^{cs}(S) = \sup_{u \in E_*^u(r_*)} \diam( S \cap \exp_{x_*}(u + E_*^{cs}(r_*))) \, .
\]

{
\begin{lem}\label{lem:UsetConstruct}
There is a decreasing sequence of compact subsets $\Delta_1 \supset \Delta_2
\supset \cdots$ of $\Lambda$ with the following properties: 
\begin{itemize}
\item[(i)] Each $\Delta_m$ is partitioned into disjoint compact subsets
$\{\Delta_{m,k}, 1 \le k \le K_m\}$ and the $\{\Delta_{m,k}\}$ are nested in
the sense that each $\Delta_{m+1, k} \subset \Delta_{m, k'}$ for some $k'$.
\item[(ii)] The sets $\Sc_{m, k} := \cup_{x \in \Delta_{m, k}} \xi(x)$ are compact and pairwise disjoint among $1 \leq k \leq K_m$.
\item[(iii)] We have \[\lim_{m \to \infty} \max_{1 \leq k \leq K_m} \diam^{cs}(\Sc_{m, k}) = 0 \, . \]
\item[(iv)] Defining $\Delta_\infty = \cap_{m \geq 1} \Delta_m$, we have
$
\tilde \mu_\uo(\Delta_\infty) \geq \frac12 c_* \, .
$
\end{itemize}
\end{lem}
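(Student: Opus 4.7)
The plan is to construct the $\Delta_{m,k}$ as $\Pi$-preimages of a nested refining sequence of finite Borel partitions of a compact set in $C^0$, where
\[
\Pi : \Lambda \to C(E_*^u(r_*), E_*^{cs}(r_*)), \qquad \Pi(x) := \gamma_x.
\]
By Lemma \ref{lem:uStack2Side}(b), $\Pi$ is continuous (uniform norm on the target), so $\Pi(\Lambda)$ is a compact subset of $C^0$. Crucially, since any two $\xi$-leaves through points of $\Lambda$ are either identical or disjoint, one has $\Pi(x) = \Pi(y)$ iff $\xi(x) = \xi(y)$; in particular, every preimage $\Pi^{-1}(A)$ is automatically $\xi$-saturated, which will be the mechanism that enforces pairwise disjointness of the $\Sc_{m,k}$.

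First, using total boundedness of $\Pi(\Lambda)$, I construct for each $m \geq 1$ a finite Borel partition $\{Q_{m,k}\}_{k=1}^{K_m}$ of $\Pi(\Lambda)$ into pieces of diameter at most $1/m$, with $\{Q_{m+1,j}\}$ refining $\{Q_{m,k}\}$. Second, to arrange compactness jointly with nesting, I inductively select compact subsets $Q^c_{m,k} \subset Q_{m,k}$ subject to $Q^c_{m+1,j} \subset Q^c_{m,k(j)}$ (where $k(j)$ is the parent index, $Q_{m+1,j} \subset Q_{m,k(j)}$), each chosen, by inner regularity of $\nu := \Pi_* \tilde\mu_\uo$, so that $\nu(Q_{m,k}\setminus Q^c_{m,k})$ fits within a summable error budget $\sum_m\delta_m < \tfrac12 c_*$. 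I then set $\Delta_{m,k} := \Pi^{-1}(Q^c_{m,k}) \subset \Lambda$.

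The verifications of (i)--(iv) are then routine. Item (i) is immediate: each $\Delta_{m,k}$ is closed in compact $\Lambda$ (pulled back from a compact subset of $C^0$), and the nesting $Q^c_{m+1,j} \subset Q^c_{m,k(j)}$ transfers to $\Delta_{m+1,j} \subset \Delta_{m,k(j)}$. For (ii), $\Sc_{m,k}$ is the continuous image of the compact product $\Delta_{m,k}\times E_*^u(r_*)$ under $(x,u)\mapsto \exp_{x_*}(u+\gamma_x(u))$, so it is compact, and pairwise disjointness of $\Sc_{m,k}$ and $\Sc_{m,k'}$ follows from $\xi$-saturation of the $\Delta$'s plus disjointness of distinct $\xi$-leaves. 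For (iii),
\[
\diam^{cs}(\Sc_{m,k}) \;\leq\; (1+o(1))\sup_{x,y\in\Delta_{m,k}}\|\gamma_x - \gamma_y\|_\infty \;\leq\; (1+o(1))\,\diam(Q^c_{m,k}) \;\leq\; 2/m
\]
(the $1+o(1)$ factor accounts for Riemannian-vs-Euclidean distortion in the chart at $x_*$, which is $O(r_*)$). For (iv), the telescoping estimate gives $\tilde\mu_\uo(\Lambda\setminus \Delta_m) \leq \sum_{i\leq m}\delta_i < \tfrac12 c_*$, hence $\tilde\mu_\uo(\Delta_\infty) \geq c_* - c_*/2 = c_*/2$.

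The only step requiring any care is the inductive compact-subset construction, which must simultaneously preserve refinement and stay within the overall measure budget. This is a standard bookkeeping exercise using inner regularity of the finite Borel measure $\nu$, and I do not anticipate substantive difficulty.
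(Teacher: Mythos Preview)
Your proposal is correct and follows essentially the same approach as the paper: parametrize the $\xi$-leaves, partition the leaf space into refining pieces of shrinking diameter, and use inner regularity of the pushed-forward measure to extract nested compact subsets with summable (or multiplicative) measure loss, then pull back to $\Lambda$. The only cosmetic difference is that the paper models the leaf space via a transversal $\Sigma = \exp_{x_*} E^{cs}_*(r_*)$ with holonomy projection and dyadic cubes, whereas you use the continuous graphing map $\Pi: x \mapsto \gamma_x$ into $C^0$; since these leaf spaces are canonically homeomorphic via $\gamma \mapsto \gamma(0)$, the two constructions are interchangeable.
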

}

\smallskip
\begin{proof}
For ease of notation, in the following proof, let us suppress the ``$\uo$'' and write $\tilde \mu := \tilde \mu_\uo$.

\medskip

Define $\Sigma = \exp_{x_*} E_*^{cs}(r_*)$, which as is easily checked is a transversal to the $\Xi$-leaves comprising $\Sc$. Set $\hat \Sigma = \Sigma \cap \Sc$ and let $\pi : \Sc \to \hat \Sigma$ denote the projection along $\Xi$-leaves. Project $\tilde \mu$ to its transverse measure $\tilde \mu^T = \tilde \mu^T_\uo$ on $\hat \Sigma$.
For each $m \geq 1$, let $\mathcal Q_m$ be a partition of $\Sigma$ into cubes of side lengths $\approx 1/2^m$ with the following properties:
\begin{itemize}
\item[(1)] The sequence $\mathcal Q_m, m \geq 1$ is \emph{increasing}, i.e., $\Qc_{m+1} \geq \Qc_m$ for each $m \geq 1$.
\item[(2)] We have $\vee_{m = 1}^\infty \Qc_m \circeq \varepsilon$, the partition of $\hat \Sigma$ into points $\tilde \mu^T$-mod 0; and
\item[(3)] For each $C \in \Qc_m$, we have $\tilde \mu^T(\pd C) = 0$.
\end{itemize}
With the $\Qc_m$ fixed, we define finite collections $\check \Qc_m, m \geq 1$, of disjoint compact sets via the following inductive procedure. 
Fix an increasing sequence $c_1 \leq c_2 \leq \cdots < 1$ for which 
$\prod_{m=1}^\infty c_m = \frac12$. 
To start, for each $C \in \Qc_1$ fix a compact subset $\check C \subset C \cap \pi(\Lambda)$ for which $\dist(\check C, \pd C) > 0$ and $\tilde \mu(\check C) \geq c_1 \tilde \mu(C)$. We set $\check \Qc_1 = \{ \check C : C \in \Qc_1\}$, so that
\[
\tilde \mu^T \bigcup_{C \in \Qc_1} \check C \geq c_1 \, \tilde \mu^T(\hat \Sigma) \geq c_1 \,  c_* \, .
\]
We construct successively $\check \Qc_1, \check \Qc_2, \cdots$ of disjoint compact subsets 
with the rule that
\begin{itemize}
\item[(i)] for each $\check C \in \check \Qc_{m}$, we have that $\check C \subset \check C'$ for some $\check C' \in \Qc_{m - 1}$, and
\item[(ii)] for each $\check C' \in \check \Qc_{m-1}$, we have $\tilde \mu^T( \cup_{\check C \in \check \Qc_{m}, \check C \subset \check C'} \check C) \geq c_{m} \tilde \mu^T(\check C')$.
\end{itemize}
With the $\{\check \Qc_m\}$ in hand, we now define the array of compact sets $\Delta_{m, k}$ as follows: for each $m \geq 1$ and $1 \leq k \leq K_m := |\Qc_m|$, we define $\Delta_{m, 1}, \cdots, \Delta_{m, K_m}$ to be the collection of sets of the form $\Delta \cap \pi^{-1}(\check C)$ as $C$ ranges over 
$\check \Qc_m$.

Item (i)--(iv) follow immediately. 
\end{proof}

What we have done in Lemma \ref{lem:UsetConstruct} is to group the unstable leaves in $\Sc$ into
finer and finer substacks with a Cantor-like structure transversally, and to do that,
we have had to give up on a little bit of $\tilde \mu_\uo$-measure. Let 
$$
\Sc_\infty := \bigcup_{x \in \Delta_\infty} \xi(x)\ .
$$

\begin{cor}\label{cor:separatingOpens} There is a decreasing sequence of  open sets 
$$
U_1 \supset U_2 \supset \dots \qquad \mbox{with} \qquad
\cap_m U_m = \Sc_\infty
$$
and a sequence of partitions $\beta_m = \{\beta_{m,k}\}$ of $U_m$ into finitely
many disjoint open sets, 
with the properties that
\begin{itemize}
\item[(i)] the partitions $\beta_m$ are nested, i.e., each $\beta_{m,k}
\subset \beta_{m-1, k'}$ for some $k'$;
\item[(ii)] each $\beta_{m,k}$ contains intact leaves of the compact substack 
$\Sc_{m,k}$, and
\item[(iii)] for each $\xi$ in $\Sc_\infty$, if $\beta_{m,k}(\xi)$ is the element 
of $\beta_m$ containing $\xi$, then $\beta_{m,k}(\xi) \downarrow \xi$ as $m \to \infty$.
\end{itemize}
\end{cor}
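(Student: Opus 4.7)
The plan is to build $\beta_{m,k}$ inductively in $m$ as pairwise disjoint open ``tubular" neighborhoods of the compact substacks $\Sc_{m,k}$ supplied by Lemma \ref{lem:UsetConstruct}, exploiting the Hausdorff-normality fact that finitely many disjoint compact sets in a metric space admit pairwise disjoint open neighborhoods, while simultaneously controlling $\diam^{cs}(\beta_{m,k})$ so that the transverse widths shrink to $0$. The open sets $U_m$ will be the disjoint unions $\bigsqcup_k \beta_{m,k}$.

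For the base case $m = 1$, pick pairwise disjoint open neighborhoods $\beta_{1,k} \supset \Sc_{1,k}$ in $\exp_{x_*}(E_*(r_*))$ and set $U_1 = \bigsqcup_k \beta_{1,k}$. For the inductive step, assume $\beta_m$ is defined. By Lemma \ref{lem:UsetConstruct}(i) each $\Sc_{m+1,k}$ sits in a unique $\Sc_{m,k'}$, hence in $\beta_{m,k'}$. Within each $\beta_{m,k'}$, the finitely many $\Sc_{m+1,k} \subset \Sc_{m,k'}$ are pairwise disjoint compact, so choose pairwise disjoint open neighborhoods $\beta_{m+1,k} \subset \beta_{m,k'}$. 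At this stage additionally shrink $\beta_{m+1,k}$ so that $\diam^{cs}(\beta_{m+1,k}) \le \diam^{cs}(\Sc_{m+1,k}) + \frac{1}{m+1}$; this is possible because $\Sc_{m+1,k}$ is a compact union of $C^{1+\Lip}$-graphs over $E_*^u(r_*)$ and thus admits transversally-thin open fattenings. Properties (i) and (ii) are built into this construction.

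The heart of the argument is verifying $\cap_m U_m = \Sc_\infty$, from which (iii) also follows. The inclusion $\Sc_\infty \subset \cap_m U_m$ is immediate from (ii) and the nestedness of the $\Delta_m$. For the reverse, take $y = \exp_{x_*}(u_0 + w_0) \in \cap_m U_m$ and, for each $m$, let $\beta_{m,k_m}$ be the element of $\beta_m$ containing $y$. Since $\Sc_{m,k_m}$ contains a leaf $\xi(x_m) = \exp_{x_*}\graph \gamma_{x_m}$ for some $x_m \in \Delta_{m,k_m}$, the diameter bound forces $|\gamma_{x_m}(u_0) - w_0| \to 0$. Using compactness of $\Delta_1$, extract $x_{m_j} \to x_\infty$; by the nested $\Delta_m$'s, $x_\infty \in \Delta_\infty$. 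The uniform $C^{1+\Lip}$ bounds from Lemma \ref{lem:uStack2Side}(a) together with the continuity statement in Lemma \ref{lem:uStack2Side}(b) give $\gamma_{x_{m_j}} \to \gamma_{x_\infty}$ uniformly, so $w_0 = \gamma_{x_\infty}(u_0)$ and $y \in \xi(x_\infty) \subset \Sc_\infty$. Applied to a fixed $\xi \in \Sc_\infty$ and any neighborhood of it, the same compactness-plus-continuity argument yields (iii).

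The only real obstacle is bookkeeping: achieving nestedness (i), openness, disjointness within each level, and transverse shrinking all at once. This is handled by performing all new shrinking \emph{inside} the previously chosen $\beta_{m,k'}$, so that (i) is automatic and disjointness is preserved; the key analytic inputs are Lemma \ref{lem:UsetConstruct}(iii) (giving $\diam^{cs}(\Sc_{m,k}) \to 0$) and Lemma \ref{lem:uStack2Side}(b) (giving continuity of $\xi(x)$ in $x$), the latter being what makes the limit leaf $\xi(x_\infty)$ a well-defined element of $\Sc_\infty$.
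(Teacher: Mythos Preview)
Your proposal is correct and is precisely what the paper has in mind: the paper's own proof consists of the single sentence ``Corollary \ref{cor:separatingOpens} follows easily from Lemma \ref{lem:UsetConstruct}; the sets $\{\beta_{m,k}\}$ can be chosen quite arbitrarily as long as they have the stated properties,'' and you have correctly supplied the details of one such choice, including the verification of $\cap_m U_m = \Sc_\infty$ via the continuity in Lemma \ref{lem:uStack2Side}(b).
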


\smallskip
Corollary \ref{cor:separatingOpens} follows easily from Lemma \ref{lem:UsetConstruct}. The sets $\{\beta_{m,k}\}$ can be
chosen quite arbitrarily as long as they have the stated properties.

\subsection{Pushed forward measures and their conditional densities}

Recall that for each $n=n_i$, we have constructed a stack $\Sc^n$ and
a partition of $\Sc^n$ into sets $\xi^n(\cdot)$ that are approximate $W^u$-leaves 
(Lemma \ref{lem:construct112stack}).
The next lemma establishes that for each $m$, by taking $n$ large enough, 
the partition $\beta_m$ will respect a definite fraction of $\xi^n$-leaves. 
Let $\Nc_\eta(\cdot)$ denote the $\eta$-neighborhood of a set. 

\begin{lem}\label{lem:intactLeavesInsideBeta}
For each $m \geq 1$, there exist $\eta_m > 0$ and $N_m \in \N$ with the property that the following hold for all $n=n_i \ge N_m$.
\begin{itemize}
\item[(a)] Define
\[
\Lambda^n_{m, k} := \Lambda^n \cap \overline{\Nc_{\eta_m}(\Delta_{m, k})} \, , \quad \Sc^n_{m, k} := \bigcup_{x \in \Lambda^n_{m, k} } \xi^n(x) \, .
\]
Then $\Sc^n_{m, k} \subset \beta_{m, k}$.
\item[(b)] Letting $\Lambda^n_m = \cup_k \Lambda^n_{m,k}$, we have
$$
\tilde \mu^n_\uo(\Lambda^n_{m, k}) \geq  \frac23  \tilde \mu_\uo (\Lambda_{m, k})\ ,
\qquad \mbox{hence} \qquad  \tilde \mu^n_\uo (\Lambda^n_m) \ge \frac13  c_* \ .
$$
\end{itemize}
\end{lem}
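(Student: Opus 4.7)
My plan is to prove (a) and (b) separately, both by exploiting the uniform closeness between $\xi^n$-leaves and $\xi$-leaves established in Proposition \ref{lem:ctyInX}, together with the weak convergence $\tilde \mu^n_\uo \to \tilde \mu_\uo$.

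For (a), the key observation is that since each $\Sc_{m,k}$ is compact (Lemma \ref{lem:UsetConstruct}(ii)) and is contained in the open set $\beta_{m,k}$ (Corollary \ref{cor:separatingOpens}(ii)), the number $\rho_m := \min_k d(\Sc_{m,k}, M \setminus \beta_{m,k})$ is strictly positive. Because the leaves $\xi(y) = \exp_{x_*} \graph \gamma_y$ and $\xi^n(x) = \exp_{x_*} \graph \gamma^n_x$ are all graphs over the common reference box $E_*^u(r_*)$ with $\calLip(\gamma_y), \calLip(\gamma^n_x) \leq 1$, closeness of the graphing functions in the uniform norm on $C(E_*^u(r_*), E_*^{cs}(r_*))$ translates, via $\exp_{x_*}$, into Hausdorff proximity of their images in $M$ with a conversion factor depending only on $l_0$. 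I would pick $\epsilon_m$ small enough that $\|\gamma^n_x - \gamma_y\| < \epsilon_m$ forces $\xi^n(x) \subset \Nc_{\rho_m / 2}(\xi(y))$, then invoke Proposition \ref{lem:ctyInX} with $\epsilon = \epsilon_m$ to produce $\tilde n_0(\epsilon_m)$ and $\tilde \eta(\epsilon_m)$. Set $N_m := \tilde n_0(\epsilon_m)$ and $\eta_m := \min\{\tilde \eta(\epsilon_m), \tfrac12 \min_{k \neq k'} d(\Delta_{m,k}, \Delta_{m,k'})\}$; the second term ensures that every $x \in \Lambda^n_{m,k}$ is $\eta_m$-close to a unique $\Delta_{m,k}$, so Proposition \ref{lem:ctyInX} yields $\xi^n(x) \subset \Nc_{\rho_m/2}(\xi(y)) \subset \beta_{m,k}$ for the corresponding $y \in \Delta_{m,k}$. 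Taking the union over $x \in \Lambda^n_{m,k}$ gives $\Sc^n_{m,k} \subset \beta_{m,k}$.

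For (b), the essential input is that $\tilde \mu^n_\uo \to \tilde \mu_\uo$ weakly and that $\tilde \mu^n_\uo$ is concentrated on $\Lambda^n$. Since $\Lambda^n_{m,k} = \Lambda^n \cap \overline{\Nc_{\eta_m}(\Delta_{m,k})}$ contains $\Lambda^n \cap \Nc_{\eta_m}(\Delta_{m,k})$, the portmanteau theorem applied to the open set $\Nc_{\eta_m}(\Delta_{m,k})$ gives
$$\liminf_{n \to \infty} \tilde \mu^n_\uo(\Lambda^n_{m,k}) \,\geq\, \liminf_{n \to \infty} \tilde \mu^n_\uo(\Nc_{\eta_m}(\Delta_{m,k})) \,\geq\, \tilde \mu_\uo(\Nc_{\eta_m}(\Delta_{m,k})) \,\geq\, \tilde \mu_\uo(\Delta_{m,k}).$$
Enlarging $N_m$ if necessary, for $n \geq N_m$ we obtain $\tilde \mu^n_\uo(\Lambda^n_{m,k}) \geq \tfrac23 \tilde \mu_\uo(\Delta_{m,k})$ for every $k$; summing on $k$ and invoking Lemma \ref{lem:UsetConstruct}(iv) gives
$$\tilde \mu^n_\uo(\Lambda^n_m) \,\geq\, \tfrac23 \tilde \mu_\uo(\Delta_m) \,\geq\, \tfrac23 \tilde \mu_\uo(\Delta_\infty) \,\geq\, \tfrac{1}{3} c_*.$$

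The main ingredient is Proposition \ref{lem:ctyInX}, which has already absorbed the substantive geometric work: the contraction of graph transforms, the switching of axes, and the continuity of $E^{u/cs}$ on uniformity sets. What remains is essentially bookkeeping; the only subtlety I anticipate is preserving disjointness when passing from the compact sets $\Delta_{m,k}$ to their $\eta_m$-thickenings in the definition of $\Lambda^n_{m,k}$, handled by the second term in the definition of $\eta_m$ above.
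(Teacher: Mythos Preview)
Your proposal is correct and follows exactly the approach of the paper, which simply records that (a) follows immediately from Proposition \ref{lem:ctyInX} and (b) from the weak convergence $\tilde\mu^n_\uo \to \tilde\mu_\uo$. You have supplied the natural details the paper omits: the positive gap $\rho_m$ between the compact substacks $\Sc_{m,k}$ and the complements of the open $\beta_{m,k}$, the portmanteau argument on the open thickenings, and the disjointness of the $\overline{\Nc_{\eta_m}(\Delta_{m,k})}$ needed to sum over $k$; note that the paper writes $\Lambda_{m,k}$ in the statement where $\Delta_{m,k}$ is evidently intended, which you have correctly interpreted.
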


\begin{proof} (a) follows immediately from Proposition \ref{lem:ctyInX} and (b) from
the fact that $\tilde \mu^n_\uo$ is assumed to converge to $\tilde \mu_\uo$.
\end{proof}

While we have used $\mu^n_\uo|_{\Lambda^n_m}$ to ensure that our partitions 
are catching a definite fraction of $\mu^n_\uo$, we are primarily interested in
$\mu^n_\uo|_{\Sc^n_m}$ where $\Sc^n_m=\cup_k \Sc^n_{m, k}$. We now turn
our attention to $\mu^n_\uo|_{\Sc^n_m}$, focusing on a part of this measure 
with controlled conditional densities.

Recall from Sect. 4.3 that we disintegrate $\mu$ on the leaves of a foliation 
$\Fc_-^n$ 
to be carried forward by $f^n_{\theta^{-n} \uo}$, and that $\Fc_-^n$ is
defined on a ball $B \subset \{ \psi \geq \a_0 / 2\}$, where $\psi = \frac{d \mu}{d \Leb}$.
Define
$$
\nu_-^n := \frac{\a_0}{2} \Leb|_B, \qquad \mbox{and} \qquad 
\nu^n = \big( f^n_{\theta^{-n} \uo} \, \,  \nu^n_- \big)  \big|_{\Sc^n}\ ,
$$
Since $\nu_-^n \le \mu$, we have that $\nu^n \leq \mu^n_\uo$ for all $n$.

Furthermore, let $(\nu^n_{\xi})_{\xi \in \Xi^n}$ denote the (normalized)
disintegration measures of $\nu^n$ along $\Xi^n$ with transversal measure 
$\nu^n_T$ on $\Sc^n / \Xi^n$. For 
$\xi \in \Xi^n$, let $\phi(\xi)$ denote the leaf of $\Fc_-^n$ containing $f^{-n}_\uo \xi$. 
It is easy to see that $\nu^n_\xi$ is $(f^n_{\theta^{-n} \uo}  \Leb_{\phi(\xi)}) |_{\xi}$ normalized. 

\begin{lem}\label{prop:defineMeasureNu} \
\begin{itemize}
\item[(a)] For measurable $C \subset \Lambda^n$, 
$\nu^n(C) \geq \frac{\a_0}{2\| \psi \|_\infty} \cdot \mu^n_\uo(C)$.
\item[(b)] For a.e. $\xi \in \Xi^n$, $\nu_\xi^n$ is absolutely continuous with 
density $\rho_\xi^n : \xi \to (0, \infty)$ satisfying the distortion estimate
\[
\bigg| \log \frac{\rho_\xi^n(p_1)}{\rho_\xi^n(p_2)} \bigg| \leq \bar D 
\]
for any $p_1, p_2 \in \xi$. Here $\bar D = \bar D(l_0, K_-, r_-) > 0$, where $K_-, r_-$ 
are as in Lemma \ref{lem:lamination}; 
in particular, $\bar D$ is independent of $\xi$ and $n$.
\end{itemize}
\end{lem}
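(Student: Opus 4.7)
The plan for part (a) is a one-line comparison at the source end. On the ball $B \subset \{\psi \geq \alpha_0/2\}$ on which $\nu^n_-$ is supported, the assumption $\mu \ll \Leb$ with continuous density gives $\mu|_B \leq \|\psi\|_\infty \Leb|_B = \tfrac{2\|\psi\|_\infty}{\alpha_0}\,\nu^n_-$, hence
\[
\nu^n_- \;\geq\; \tfrac{\alpha_0}{2\|\psi\|_\infty}\, \mu|_B \,.
\]
I would push this inequality forward by $f^n_{\theta^{-n}\uo}$ and then restrict to $\Sc^n$. For $C \subset \Lambda^n$, one has $f^{-n}_\uo C \subset \Lambda^n_- \subset \overline{\Bc(\hat p_-, \e_*)} \subset B$ by our choice of $\e_*$ in Section 5.1 and Lemma \ref{lem:lamination}, so the restriction $\mu|_B$ is irrelevant for evaluating $\nu^n(C)$, and part (a) follows.

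For part (b), I would first identify the conditional densities explicitly. The leaves of $\Fc^n_-$ are parallel affine planes, so the conditional measures of $\nu^n_- = \tfrac{\alpha_0}{2}\Leb|_B$ on these leaves are constant multiples of Lebesgue. Under the pushforward by $f^n_{\theta^{-n}\uo}$, each leaf $\phi(\xi) := f^{-n}_\uo \xi \subset \Fc^n_-$ maps diffeomorphically onto $\xi$ (the relevant component, by the graph-transform construction of $\xi$ in Section 5.1). Change of variables then gives, for a.e. $\xi \in \Xi^n$,
\[
\rho^n_\xi(p) \;=\; Z_\xi^{-1}\,\bigl[\det\bigl(df^n_{\theta^{-n}\uo}\bigl|_{T_{f^{-n}_\uo p}\phi(\xi)}\bigr)\bigr]^{-1}\,,
\]
with $Z_\xi$ a normalizing constant. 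In particular, $\nu^n_\xi$ is absolutely continuous.

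The main step, and the only nontrivial one, is the distortion bound. It reduces to estimating the log-ratio of $\det(df^n_{\theta^{-n}\uo}|_{T\phi(\xi)})$ at two points of $\phi(\xi)$, which I would obtain by applying Lemma \ref{lem:distortion} to the iterated graph transform sequence starting from the initial graphing map $g := h^-_{x_{-n}}$ for any $x \in \Lambda^n$ with $\xi \subset W^n_{(x,\uo)}$. Lemma \ref{lem:lamination} ensures $\Lip(g), \Lip(dg) \leq K_-$ on a domain of radius $\sim r_-$, so the constants $K_0, a_0, r_0$ entering Lemma \ref{lem:distortion} depend only on $K_-, r_-$. The constant $D$ in that lemma further depends on $\hat l^{(0)}$ and $\hat l^{(n)}$; the point where the present setup pays off is that by construction $\Lambda^n_-$ corresponds to $(x_{-n},\hat\uo) \in \Theta_n = \Theta_0 \cap \tau^{-n}\Theta_0 \subset \Gamma_{l_0}\cap \tau^{-n}\Gamma_{l_0}$, so both $\hat l^{(0)} = l(x_{-n},\hat\uo)$ and $\hat l^{(n)} = l(\tau^n(x_{-n},\hat\uo))$ are bounded by $l_0$. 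Hence $D$ collapses to a single $\bar D = \bar D(l_0, K_-, r_-)$ that is independent of $n$ and of $\xi$. This uniformity in $n$---which is the whole reason the source/target uniformity sets were built into $\Theta_n$ in Sect. 4.1---is the principal obstacle, and once it is in hand the estimate in (b) is immediate.
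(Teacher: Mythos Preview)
Your proposal is correct and follows essentially the same approach as the paper: part (a) by the trivial comparison $\mu \leq \|\psi\|_\infty \Leb$ at the source, and part (b) by invoking Lemma \ref{lem:distortion} with $K_0 = K_-$, $a_0 \leq K_-$, $r_0 = r_-$, and using the $\Theta_n$ construction to bound both $\hat l^{(0)}$ and $\hat l^{(n)}$ by $l_0$. Your write-up is in fact more explicit than the paper's two-line proof, correctly isolating the key point that the uniformity-at-both-ends built into $\Theta_n$ is what makes $\bar D$ independent of $n$.
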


\begin{proof}
The estimate in (a) follows from the simple bound $\mu^n_\uo (C) \leq \| \psi\|_{\infty} \Leb(f^{-n}_\uo C)$, 
and Item (b) 
follows from the distortion estimate in Lemma \ref{lem:distortion} applied to 
$K_0 = K_-, r_0 = r_-$.
\end{proof}

\subsection{Passing to the weak limit as $n \to \infty$ and completing the proof}

Let $\nu^n_m := \nu^n|_{\Sc^n_m}$. From Lemmas \ref{lem:intactLeavesInsideBeta}(b) and \ref{prop:defineMeasureNu}(a), we have
that for every $m$,  
\begin{align}\label{lower1112}
\nu^n_m(\Sc^n_m) \ge \frac{\a_0}{6\| \psi \|_\infty} 
 c_*
\end{align}
for every $n=n_i \ge N_m$. We fix such an $n(m)$ for each $m$; clearly
$n(m) \to\infty$ as $m \to \infty$. Let
$\nu^*$ be any limit point of the sequence $\nu^{n(m)}_m$ as $m \to \infty$.
Then $\nu^*$ is supported on $\Sc_\infty$ with $\nu^* \leq \mu_\uo$. Moreover,
the lower bound \eqref{lower1112} passes to $\nu^*(\Sc_\infty)$.
Let $\nu^*_\xi$ denote
the conditional measures of $\nu^*$ on the leaves $\xi \in \Xi$. To complete
the proof of Proposition \ref{prop:main}, it suffices to show that for a.e. $\xi$, the measure $\nu^*_\xi$
is absolutely continuous. 


For this, we state below a lemma that will be used to deduce properties of 
the conditional measures 
of $\nu^*$ on leaves of $\Xi$ from those of $\nu^n_m$ on $\Xi^n$. First, we need some
 notation: Let $C^u \subset E^u_*( r_*)$ be a cube. We let
$\hat{\Leb}(C^u) = \Leb(C^u)/\Leb(E_*^u( r_* ))$, and
define
$$
V_{C^u} := \exp_{x_*}(C^u + E^{cs}_*( r_*)) \ .
$$

Recall that $\nu^n_m|_{\Sc^n_{m,k}} =\nu^n_m|_{\beta_{m,k}}$ for $n=n(m)$. 

\begin{lem}\label{lem:proveDensityBounds} There exists $A>1$ such that for
any $C^u \subset E_*^u( r_*)$, we have, for all large enough 
$m$ and $n=n(m)$:
\begin{equation} \label{density bounds}
\frac{1}{A} \cdot \hat{\Leb}(C^u)  \ \le \ 
\frac{\nu_m^n(\beta_{m,k} \cap V_{C^u})}{\nu_m^n(\beta_{m,k})} \ \le \ 
A \cdot \hat{\Leb}(C^u) \ .
\end{equation}
It follows that for a.e. $\xi \in \Xi$, 
{we have $\pi^u_*\nu^*_\xi \ll \hat \Leb$ with 
$\frac{d (\pi^u_*\nu_\xi^*)}{d \hat \Leb} \in  [\frac{1}{ A},  A]$, where
$\pi^u$ is projection onto $E^u_*(r_*)$ along $E^{cs}_*$.}
\end{lem}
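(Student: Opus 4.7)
The plan is to establish the bound \eqref{density bounds} at the pre-limit level, pass to the weak limit $\nu^*$, and then extract absolute continuity of $\pi^u_*\nu^*_\xi$ via martingale convergence.

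For \eqref{density bounds}, Lemma \ref{lem:intactLeavesInsideBeta}(a) together with disjointness of the $\Sc^n_{m,k}$ in distinct $k$ gives $\nu^n_m(\beta_{m,k}\cap E) = \nu^n(\Sc^n_{m,k}\cap E)$ for any Borel $E$. I then disintegrate $\nu^n|_{\Sc^n_{m,k}}$ along $\Xi^n$: for each $\xi \in \Xi^n_{m,k} := \{\xi^n(x):x\in\Lambda^n_{m,k}\}$, Lemma \ref{lem:construct112stack} identifies $\xi$ with $\exp_{x_*}\graph \gamma^n_x$ for some $\gamma^n_x:E^u_*(r_*)\to E^{cs}_*(r_*)$ with $\calLip(\gamma^n_x) \le 1$, so the restriction of $\pi^u$ to $\xi$ is a bi-Lipschitz bijection onto $E^u_*(r_*)$ carrying $\xi \cap V_{C^u}$ onto $C^u$ with Jacobian bounded above and below by dimension-dependent constants. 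Combining this with the distortion bound $|\log(\rho^n_\xi(p_1)/\rho^n_\xi(p_2))| \le \bar D$ from Lemma \ref{prop:defineMeasureNu}(b) and the fact that $\nu^n_\xi$ is a probability measure, the elementary observation that a probability density on $E^u_*(r_*)$ with oscillation bounded by $K$ assigns any cube $C^u$ a mass in $[K^{-1}, K]\cdot\hat{\Leb}(C^u)$ yields $\nu^n_\xi(\xi\cap V_{C^u}) \in [A^{-1}, A]\,\hat{\Leb}(C^u)$ for some $A = A(\bar D)>1$ independent of $\xi$, $n$, $m$. Integrating against the transverse measure gives \eqref{density bounds}.

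Next I pass to the weak limit. Fix $(m_0,k_0)$; the latitude granted by Corollary \ref{cor:separatingOpens} in choosing the open sets $\beta_{m_0,k_0}$, together with restricting $C^u$ to a countable generating family of cubes, allows me to arrange $\nu^*(\partial \beta_{m_0,k_0}) = \nu^*(\partial V_{C^u}) = 0$. By the nesting of Corollary \ref{cor:separatingOpens}(i), for every $m\ge m_0$ each $\Sc^{n(m)}_{m,k}$ is either contained in $\beta_{m_0,k_0}$ or disjoint from it, so summing \eqref{density bounds} over the relevant $k$'s produces the same two-sided estimate with $(m_0,k_0)$ in place of $(m,k)$ and the same constant $A$. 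The Portmanteau theorem then transfers the estimate to $\nu^*$:
\[
A^{-1}\hat{\Leb}(C^u)\cdot \nu^*(\beta_{m_0,k_0}) \;\le\; \nu^*(\beta_{m_0,k_0}\cap V_{C^u}) \;\le\; A\,\hat{\Leb}(C^u)\cdot \nu^*(\beta_{m_0,k_0}).
\]

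To conclude, Corollary \ref{cor:separatingOpens}(iii) gives that the atoms of $\beta_\infty := \bigvee_m \beta_m$ restricted to $\Sc_\infty$ are exactly the $\Xi$-leaves through $\Sc_\infty$, so the conditional measures $\nu^*_\xi$ arise as the martingale limits of the averages $\nu^*(\,\cdot\,\mid\,\beta_{m_0}(\xi))$ as $m_0 \to \infty$, for $\nu^*$-a.e. $\xi$. Applying this to the indicator of $V_{C^u}$ descends the estimate above to $A^{-1}\hat{\Leb}(C^u) \le \nu^*_\xi(V_{C^u}) \le A\,\hat{\Leb}(C^u)$ for every $C^u$ in the countable family and $\nu^*$-a.e. $\xi$; since $\nu^*_\xi(V_{C^u}) = \pi^u_*\nu^*_\xi(C^u)$, this identifies $\pi^u_*\nu^*_\xi$ as absolutely continuous with density in $[A^{-1}, A]$, as claimed. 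I expect the main obstacle to be the weak-limit step: one must simultaneously arrange for $\nu^*$ to charge neither $\partial \beta_{m_0,k_0}$ nor $\partial V_{C^u}$, which is handled by the (small) freedom left in the choice of the $\beta_{m_0,k_0}$ from Corollary \ref{cor:separatingOpens} and by selecting the cubes $C^u$ from a generating family generic for $\nu^*$.
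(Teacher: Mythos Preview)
Your proposal is correct and follows essentially the same route as the paper: leafwise bounds from Lemma~\ref{prop:defineMeasureNu}(b), integration against the transverse measure to obtain \eqref{density bounds}, the nesting from Corollary~\ref{cor:separatingOpens}(i) to carry the estimate down to a fixed $\beta_{m_0,k_0}$ for all later $m$, a weak-limit passage to $\nu^*$, and finally refinement as $m_0\to\infty$ to recover the leafwise statement. Your handling of the weak-limit step (Portmanteau plus null boundaries for $\beta_{m_0,k_0}$ and $V_{C^u}$) is in fact more careful than the paper's, which simply writes ``letting $m'\to\infty$'' without addressing boundary mass.
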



\begin{proof}[Proof of Lemma \ref{lem:proveDensityBounds}] 
Let $C^u$ be fixed. That (\ref{density bounds}) holds for each $\nu_m^n$ follows
from the fact that it holds for each $\nu^n_\xi$ 
by Lemma \ref{prop:defineMeasureNu}(b).

To pass these bounds to $\nu^*_\xi$,
we let $m$ and $k$ be fixed to begin with. Since 
$$
\nu^{n(m')}_{m'}(\beta_{m,k} \cap V_{C^u}) = \sum_{k' : \beta_{m',k'} \subset \beta_{m,k}}
\nu^{n(m')}_{m'}(\beta_{m',k'} \cap V_{C^u})\ ,
$$
and (\ref{density bounds}) holds for all $m' \ge m$ and all $k'$, 
it follows that for fixed $\beta_{m,k}$, (\ref{density bounds}) holds with $\nu^n_m$ replaced by $\nu^{n(m')}_{m'}$. Letting $m' \to \infty$, 
we obtain that it holds with $\nu^*$ in the place of $\nu^n_m$.

Continuing to keep $C^u$ fixed but letting $m \to \infty$ and running through
all $\beta_{m,k} \in \beta_m$ for each $m$, we obtain by Corollary \ref{cor:separatingOpens}(iii)
that for a.e. $\xi \in \Xi$, 
$$
\frac{1}{A} \cdot \hat{\Leb}(C^u) \ \le {\pi^u_*\nu^*_\xi (C^u)}\ \le
A \cdot \hat{\Leb}(C^u)\ .
$$
As cubes form a basis for the topology on $E_*^u(r_*)$,
the assertion follows. 
\end{proof}

The proof of Proposition \ref{prop:main} is now complete.

\section*{Appendix}

Below, we deduce Lemma \ref{lem:measSelection} from the following well-known theorem.

\begin{thm}[Kuratowski-Ryll-Nardzewski measurable selection theorem]\label{thm:measurableSelection}
Let $(\Xi, \mathcal M)$ be a measurable space, $Z$ a Polish space, and let $F : \Xi \to 2^Z$ be a set-valued mapping such that
\begin{itemize}
\item $F(\xi)$ is closed and nonempty for each $\xi \in \Xi$, and
\item for any open $U \subset Z$, we have
\[
\{ \xi \in \Xi : F(\xi) \cap U \neq \emptyset\} \in \mathcal{M} \, .
\]
\end{itemize}
Then, there exists a measurable map $f : \Xi  \to Z$ for which $f(\xi) \in F(\xi)$ for all $\xi \in \Xi$.
\end{thm}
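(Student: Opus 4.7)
The plan is to construct the selector $f$ as the pointwise limit of a sequence $f_n : \Xi \to Z$ of countably-valued measurable approximations, each of which selects a point from the appropriate $F(\xi)$ to within $2^{-n}$. The two hypotheses are used as follows: nonemptiness/closedness guarantee that a limit of near-selectors must lie in $F(\xi)$, while the preimage measurability condition ensures that the inductive construction can be carried out within $\mathcal{M}$.

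To begin, I would fix a complete metric $d$ on $Z$ compatible with its topology and bounded by $1$ (a Polish space admits such a metric), and fix a countable dense subset $\{z_j\}_{j \geq 1} \subset Z$. I then construct measurable functions $f_n : \Xi \to \{z_j\}_{j \geq 1}$, $n \geq 1$, satisfying
\[
d(f_n(\xi), F(\xi)) < 2^{-n} \qquad \text{and} \qquad d(f_{n+1}(\xi), f_n(\xi)) < 2^{-n+1} \qquad \text{for all } \xi \in \Xi .
\]
For the base case $n=1$, the open balls $B(z_j, 1/2)$ cover $Z$, so by density and nonemptiness of $F(\xi)$, for each $\xi$ the set $J_1(\xi) := \{ j : F(\xi) \cap B(z_j, 1/2) \neq \emptyset \}$ is nonempty; define $f_1(\xi) = z_{j_1(\xi)}$ with $j_1(\xi) = \min J_1(\xi)$. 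Measurability of $f_1$ reduces to checking that $\{\xi : f_1(\xi) = z_j\}$ lies in $\mathcal{M}$, and this set equals $\{\xi : F(\xi) \cap B(z_j, 1/2) \neq \emptyset\} \setminus \bigcup_{k < j} \{\xi : F(\xi) \cap B(z_k, 1/2) \neq \emptyset\}$, which is measurable by the hypothesis on $F$.

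For the inductive step, assume $f_n$ is constructed. On each measurable piece $\Xi_{n,i} := \{\xi : f_n(\xi) = z_i\}$, I would define $f_{n+1}(\xi) = z_{j_{n+1}(\xi)}$ with $j_{n+1}(\xi)$ the smallest index $j$ satisfying both $d(z_j, z_i) < 2^{-n+1}$ and $F(\xi) \cap B(z_j, 2^{-n-1}) \neq \emptyset$. Such a $j$ exists: by the inductive hypothesis there is $w \in F(\xi)$ with $d(w, z_i) < 2^{-n}$, and by density some $z_j$ lies within $2^{-n-1}$ of $w$, which then satisfies both conditions with strict inequality. The first condition is a purely combinatorial constraint on the pair $(i,j)$, while the second defines a measurable subset of $\Xi$; so the set $\{\xi \in \Xi_{n,i} : f_{n+1}(\xi) = z_j\}$ is obtained from $\Xi_{n,i}$ by a finite intersection and a countable union of sets in $\mathcal{M}$, establishing measurability of $f_{n+1}$. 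Both required estimates for $f_{n+1}$ are then immediate from the construction.

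Finally, the estimate $d(f_{n+1}(\xi), f_n(\xi)) < 2^{-n+1}$ makes $(f_n(\xi))_n$ a Cauchy sequence uniformly in $\xi$; since $(Z,d)$ is complete, $f(\xi) := \lim_{n \to \infty} f_n(\xi)$ exists for every $\xi$, and $f$ is measurable as the pointwise limit of measurable maps into a metric space. The bound $d(f_n(\xi), F(\xi)) < 2^{-n}$ forces $f(\xi) \in \overline{F(\xi)} = F(\xi)$, completing the selection. The main technical obstacle is ensuring measurability of $f_{n+1}$ while simultaneously controlling both $d(f_{n+1}, F(\cdot))$ and $d(f_{n+1}, f_n)$; this is precisely what the two-condition ``smallest $j$'' rule on each $\Xi_{n,i}$ accomplishes, but it forces the inductive step to be written out carefully so that the selection rule depends only on measurable quantities.
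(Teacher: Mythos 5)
The paper does not prove this theorem: it states it as a known result and cites Wagner's survey \cite{wagner1977survey}, so there is no in-paper argument to compare against. Your proof is correct, and it is in fact the classical Kuratowski--Ryll-Nardzewski argument (as it appears, e.g., in Kechris' \emph{Classical Descriptive Set Theory}): build countably-valued measurable $2^{-n}$-approximate selectors $f_n$ using a ``least-index'' rule over a fixed countable dense set, arrange that consecutive approximants are close so the sequence is uniformly Cauchy, and pass to the limit using completeness of the metric and closedness of the values $F(\xi)$. The two-condition least-index rule on each piece $\Xi_{n,i}$ is precisely the device that keeps both the approximation bound and the Cauchy bound under control while staying within $\mathcal{M}$, and you verify both the existence of an admissible index and the measurability of the resulting choice correctly. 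The only cosmetic point worth noting is that boundedness of the metric is never actually used, so that normalization can be dropped.
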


For an account of measurable selection theorems, see, e.g., the survey \cite{wagner1977survey}.

\begin{proof}[Proof of Lemma \ref{lem:measSelection}] Let $X, Y$ be Polish and let $G \subset X \times Y$ be a compact subset, writing $G_X$ for the projection of $G$ onto $X$. Applying Theorem \ref{thm:measurableSelection} to $(\Xi, \mathcal M) = (X, \operatorname{Bor}(X))$, $Z = Y$ and
$F(x) := \{ y \in Y : (x,y) \in G\}$, it suffices to show that for any open $U \subset Y$,
\[
V_U := \{ x \in X : F(x) \cap U \neq \emptyset \}
\]
is a Borel measurable subset of $X$.

For this, note that because $Y$ is Polish, we may represent $U$ as the countable union of closed sets $U_i$, so $U = \cup_i U_i$. Moreover, as one easily checks,
\[
V_U = \bigcup_i V_{U_i} \, .
\]
It suffices to show that each $V_{U_i}$ is closed. For this, let $\{ x^n\} \subset V_{U_i}$ be a sequence converging to a point $x \in X$. To show $x \in V_{U_i}$, fix for each $n$ an element $y^n \in F(x^n) \cap U_i$. By compactness of $G \cap (X \times U_i)$, it follows that a subsequence of $(x^n, y^n)$ converges to an element $(x^*, y^*)$ of $G \cap (X \times U_i)$. But $x = x^*$, hence $y^* \in F(x)$; since $y^* \in U_i$, it follows that $x \in V_{U_i}$.
\end{proof}

\bibliography{biblio}
\bibliographystyle{plain}

\end{document}